\theoremstyle{plain}
\newtheorem{thm}{Theorem}[section]
\newtheorem{prp}[thm]{Proposition}
\newtheorem{lem}[thm]{Lemma}
\newtheorem{cor}[thm]{Corollary}
\newtheorem*{thm-nn}{Theorem}
\newtheorem*{prp-nn}{Proposition}
\newtheorem*{lem-nn}{Lemma}
\newtheorem*{cor-nn}{Corollary}
\newtheorem*{clm-nn}{Claim}
\newtheorem*{cnj-nn}{Conjecture}
\newtheorem*{prb-nn}{Problem}
\theoremstyle{definition}
\newtheorem{dfn}[thm]{Definition}
\newtheorem{exm}[thm]{Example}
\newtheorem*{dfn-nn}{Definition}
\newtheorem{rmk}[thm]{Remark}
\newtheorem{ntn}[thm]{Notation}
\newtheorem{asp}[thm]{Assumption}
\newcommand{\xyR}[1]{%
\xydef@\xymatrixrowsep@{#1}}
\newcommand{\xyC}[1]{%
\xydef@\xymatrixcolsep@{#1}}
\def\al{\alpha}
\def\be{\beta}
\def\la{\lambda}
\def\si{\sigma}
\def\top{\operatorname{top}} 
\def\soc{\operatorname{soc}}
\def\Im{\operatorname{Im}}
\def\Hom{\operatorname{Hom}}
\def\rad{\operatorname{rad}}
\def\End{\operatorname{End}}
\def\mod{\operatorname{mod}}
\def\thick{\operatorname{thick}}
\def\add{\operatorname{add}}
\def\Camb{\operatorname{\textbf{Camb}}}
\def\biCamb{\operatorname{\textbf{biCamb}}}
\def\Kb{{\mathcal K}^{\text{\rm b}}}
\def\calP{{\mathcal P}}
\def\calQ{{\mathcal Q}}
\def\calS{{\mathcal S}}
\def\bbA{{\mathbb A}}
\def\bbZ{{\mathbb Z}}
\def\bbR{{\mathbb R}}
\def\incl{\hookrightarrow}
\def\iso{\cong}
\def\ds{\oplus}
\def\Ds{\bigoplus}
\def\dsm#1,#2..#3{\bigoplus_{{#1}={#2}}^{#3}}
\def\sm#1,#2..#3{\sum_{{#1}={#2}}^{#3}}
\def\id{1\kern-.25em{\text{{\rm l}}}} 
\def\isoto{\ \raise.3ex\hbox{$^{\sim}$}\kern-.8em\hbox{$\to$}\ } 
\def\ya#1{\xrightarrow{#1}}
\def\bg{%
\family{cmr}\size{20}{12pt}\selectfont}
\def\bigzerou{%
\smash{\lower1.7ex\hbox{\bg 0}}}
\def\repr[#1;#2;#3;#4;#5]{
\left(
\begin{matrix}#1\\#2\end{matrix}
#3
\begin{matrix}#4\\#5\end{matrix}
\right)}
\def\pmat#1{\begin{pmatrix} #1 \end{pmatrix}}
\def\smat#1{\begin{smallmatrix} #1 \end{smallmatrix}}
\def\psmat#1{\left(\begin{smallmatrix} #1 \end{smallmatrix}\right)}
\def\k{\Bbbk}
\def\tsilt{\mbox{\rm 2-silt}\hspace{.01in}}
\def\ttilt{\mbox{\rm 2-tilt}\hspace{.01in}}
\def\tptilt{\mbox{\rm 2-ptilt}\hspace{.01in}}
\def\tpjtilt{\mbox{\rm 2-ptilt}^j\hspace{.01in}}
\def\indtptilt{\mbox{\rm 2-iptilt}\hspace{.01in}}
\def\proj{\operatorname{proj}}
\def\conv{\operatorname{conv}}
\keywords{Brauer tree algebras, 2-term tilting complexes, simplicial complexes, derived invariants}
\title{Simplicial complexes and tilting theory for Brauer tree algebras}
\author{Hideto Asashiba}
\address{Department of Mathematics, Faculty of Science, Shizuoka University,
  836 Ohya, Suruga-ku, Shizuoka, 422-8529, Japan}
\email{asashiba.hideto@shizuoka.ac.jp}
\thanks{H.A.\ is supported by Grant-in-Aid for Scientific Research (C) 18K03207}
\author{Yuya Mizuno}
\address{Faculty of Liberal Arts and Sciences, Osaka Prefecture University, 1-1 Gakuen-cho,
Naka-ku, Sakai, Osaka 599-8531, Japan}
\email{yuya.mizuno@las.osakafu-u.ac.jp}
\thanks{Y.M.\ is supported by Grant-in-Aid for JSPS Research Fellow 17J00652.}
\author{Ken Nakashima}
\address{Department of Mathematics, Faculty of Science, Shizuoka University, 
836 Ohya, Suruga-ku, Shizuoka, 422-8529, Japan}
\email{nakashima.ken@shizuoka.ac.jp}
\begin{document}
\maketitle

\begin{abstract}
We study 2-term tilting complexes of Brauer tree algebras in terms of simplicial complexes. 
We show the symmetry and convexity of the lattice polytope corresponding to the simplicial complex of 2-term tilting complexes. 
Via a geometric interpretation of derived equivalences, 
we show that the $f$-vector of the simplicial complexes of Brauer tree algebras only depends on the number of the edges of the Brauer trees and hence it is a derived invariant. 
In particular, this result implies that the number of 2-term tilting complexes, which is in bijection with support $\tau$-tilting modules, is a derived invariant.
Moreover, we apply our result to the enumeration problem  of Coxeter-biCatalan combinatorics.
\end{abstract}

\section{Introduction}

In this paper, we study derived invariants for Brauer tree algebras via simplicial complexes defined by a family of tilting complexes. Let us briefly recall the historical background on Brauer tree algebras (Definition \ref{Brauer tree}). 
Brauer tree algebras can trace its history to modular representation theory. Brou\'{e}'s abelian defect group conjecture asserts that a block $B$ of a (nonsemisimple) finite group algebra with an abelian defect $D$ would be derived equivalent to its Brauer correspondent $B'$. 
If $D$ is cyclic, then both blocks $B$ and $B'$ turns out to be algebras given by Brauer trees having the same number of edges and multiplicities (we refer to \cite[Section V]{Al}). 
From this viewpoint, Rickard showed that 
Brauer tree algebras are derived equivalent if and only if they 
have the same number of edges and multiplicities,  
and gave an affirmative answer to this conjecture for the cyclic defect case \cite{R2}. 
 
In our work, we study the set of \emph{2-term tilting complexes} (Definition \ref{two-term tilt}) for Brauer tree algebras and give a better understanding for their derived invariants. 
The poset of 2-term tilting complexes are particularly important in $\tau$-tilting theory, introduced by Adachi-Iyama-Reiten \cite{AIR}. 
Indeed, in the case of symmetric algebras, this poset is
isomorphic to that of support $\tau$-tilting modules, and their mutation behaviors are much better than classical tilting theory (see expository papers \cite{IR,BY} about $\tau$-tilting theory, cluster theory and many related topics). 
Moreover, Brauer tree algebras are symmetric and representation-finite.   It implies that the set of tilting complexes are transitive by the action of mutation  (more strongly, it is \emph{tilting-discrete}) and the mutation behavior of 2-term tilting complexes are essential for the whole mutation behavior of tilting complexes \cite{AM}. 

Our key method for this study is a realization of the set of 2-term tilting complexes as a simplicial complex and a lattice polytope. 
We investigate the simplicial complex via the geometric realization. To explain our results, we give the following set-up.

Let $G$ be a Brauer tree having $n$ edges with an arbitrary multiplicity, $A_G$ the Brauer tree algebra and $\tptilt(A_G)$ 
the set of isoclasses of basic 2-term pretilting complexes of $\Kb(\proj A_G)$ (see Definition \ref{two-term tilt} for details).
For an integer $j$ such that $1\leq j \leq n$, we let
$$
\tpjtilt (A_G):=\{T\in \tptilt (A_G) \mid |T|=j \},
$$
where $|T|$ denotes the number $d$ in the direct sum decomposition $T = \Ds_{i=1}^d T_i$ of $T$ into indecomposable direct summands $T_i$.
Note that $\tptilt^n(A_G)$ coincide with the set of 2-term tilting complexes \cite[Proposition 3.3]{AIR}. 
Then, following \cite{DIJ}, we define the simplicial complex $\Delta=\Delta(A_G)$ on the following set : 
$$\Delta^0:=\{[T]\ |\ T\in\tptilt^1(A_G)\},$$
where $[T]$ is an element of the Grothendieck group of $K_0(\Kb(\proj A_G))$. 
Then, for a subset $\{[T_1],\ldots,[T_j]\}$ of $\Delta^0$, we declare the set to be a simplex of $\Delta$ if 
$T_1\oplus\cdots\oplus T_j\in\tptilt (A_G)$.
Hence the set of $j$-dimensional faces $\Delta^{j}$ correspond to $\tptilt^{j+1}(A_G)$, 
and the $f$-vector $(f_0,f_1,\cdots,f_{n-1})$, which is defined by $f_{j}=\#\Delta^{j}$, presents the explicit number of 2-term pretilting complexes. 

On the other hand, we can define the \emph{$g$-vector} of $T$ (Definition \ref{g-vector})
by $g(T):= (g_1, \cdots, g_n)^t \in \bbZ^n$
if $[T]=\sum_{i=1}^n g_i[e_iA]$ in $K_0(\Kb(\proj A_G))$, the convex hull of $T$ by 
$$\conv_0(T):= \conv(0, g(T_1), \dots, g(T_n)),$$ 
and the {\em $g$-polytope} of $A_G$ by 
$$\mathcal{P}(A_G):=\bigcup_{T \in \tiny\ttilt A_G}\conv_0(T).$$ 
{Then $\mathcal{P}(A_G)$ is a lattice polytope of $\bbR^n$ admitting a 
unimodular triangulation (Proposition \ref{volume}) and 
$\Delta(A_G)$ is the simplicial complex determined by the unimodular triangulation of $\tptilt^n(A_G)$.}

Our first crucial observation is in the symmetry of the polytope relative to the origin (Corollary \ref{same convex}). 
This fact gives a correspondence of the upper half part and the lower half part of the $f$-vectors of $\Delta(A_G)$ divided by $H_i^0:=\{(v_j)_j^n \in \bbR^n \mid v_i = 0\}$ (Theorem \ref{thm:main1}). 
The second key observation is in the relationship between  
the shapes of $g$-polytopes for derived equivalent algebras.
More precisely, for two derived equivalent algebras $A_G$ and $A_{\mu_i(G)}$, where $\mu_i(-)$ denotes the Kauer move (see \cite{K}), 
we show that the derived equivalence functor induces a correspondence between the upper half part of $\Delta(A_G)$  and the lower half part of $\Delta(A_{\mu_i(G)})$ (Lemma \ref{Lem:num}).  
From these two observations, 
we show that the $f$-vector (and even for any $f_j$) is a derived invariant for Brauer tree algebras and independent of the shape of $G$.

The following picture describes $\calP(A_G)$ (and $\Delta(A_G)$) in the case of a linear  tree $G$ having 3 edges  
(see Example \ref{exam1}).

\[
\begin{tikzpicture}
\draw[draw=black,->,thick] (118bp,26bp)--(-127bp,-27bp);
\draw[draw=black,->,thick] (-120bp,28bp)--(129bp,-30bp);
\draw[draw=gray,ultra thin,densely dashed] (131bp,0bp)--(68bp,15bp)--(-65bp,15bp)--(-131bp,1bp)--(-65bp,108bp)--(68bp,15bp)--(0bp,92bp);
\draw[draw=gray,ultra thin,densely dashed] (-65bp,108bp)--(-65bp,15bp);
\draw[draw=gray,ultra thin] (0bp,92bp)--(-65bp,108bp)--(-133bp,92bp)--(-131bp,1bp)--(-68bp,-15bp)--(65bp,-15bp)--(131bp,0bp)--(0bp,92bp)--(-133bp,92bp)--(-68bp,-15bp)--(0bp,92bp)--(65bp,-15bp);
\filldraw[draw=black,thick,fill=black,opacity=.1] (-131bp,1bp)--(-68bp,-15bp)--(65bp,-15bp)--(131bp,0bp)--(0bp,92bp)--(-65bp,108bp)--(-133bp,92bp)--cycle;
\filldraw[draw=black,thick,fill=black,opacity=.1] (-131bp,1bp)--(-68bp,-15bp)--(65bp,-15bp)--(131bp,0bp)--(133bp,-92bp)--(65bp,-107bp)--(0bp,-91bp)--cycle;
\draw[draw=gray,ultra thin] (-131bp,1bp)--(0bp,-91bp)--(-68bp,-15bp)--(65bp,-107bp)--(131bp,0bp)--(133bp,-92bp)--(65bp,-107bp)--(0bp,-91bp);
\draw[draw=gray,ultra thin] (-68bp,-15bp)--(65bp,-107bp)--(65bp,-15bp);
\draw[draw=gray,ultra thin,densely dashed] (-65bp,15bp)--(0bp,-91bp)--(68bp,15bp)--(133bp,-92bp)--(0bp,-91bp);
\draw (-68bp,-15bp) node (P1) {$\bullet$};
\draw (65bp,-15bp) node (P2) {$\bullet$};
\draw (0bp,92bp) node (P3) {$\bullet$};
\draw (68bp,15bp) node (Q1) {$\bullet$};
\draw (-65bp,15bp) node (Q2) {$\bullet$};
\draw (-131bp,1bp) node (S1) {$\bullet$};
\draw (131bp,0bp) node (T1) {$\bullet$};
\draw (-133bp,92bp) node (T2) {$\bullet$};
\draw (-65bp,108bp) node (S3) {$\bullet$};
\draw (0bp,-91bp) node (Q3) {$\bullet$};
\draw (65bp,-107bp) node (T3) {$\bullet$};
\draw (133bp,-92bp) node (S2) {$\bullet$};
\end{tikzpicture}
\]

 Our main results are summarized as follows.

\begin{thm}[Theorem \ref{main3}]
Let $G$ be a Brauer tree having $n$ edges with an arbitrary multiplicity and 
$A_G$ the Brauer tree algebra of $G$. 
Then the $f$-polynomial and $h$-polynomial {\rm (in Definition \ref{f-polynomial})} of $\Delta(A_G)$ are given as follows$:$
$$f(x) = \sum_{j=0}^{n}\binom{n+j}{j,j,n-j}x^{n-j},\ \ \ h(x) = \sum_{j=0}^{n}\binom{n}{j}^2x^{n-j},$$
where we denote by $\binom{n+j}{j,j,n-j}:=(n+j)!/j! j! (n-j)!$.
In particular, the number of 2-term tilting complexes is $\binom{2n}{n}$ and it is a derived invariant.
\end{thm}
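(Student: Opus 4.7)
The plan is to use the two preceding structural observations to reduce the computation to a single canonical Brauer tree, enumerate directly, and then derive the $h$-polynomial from the $f$-polynomial by a formal manipulation.

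\emph{Reducing to one tree.} By Corollary \ref{same convex} the polytope $\mathcal{P}(A_G)$ is centrally symmetric, so reflection through the origin gives, for each $i\in\{1,\dots,n\}$, a bijection between the faces of $\Delta(A_G)$ on the two sides of the coordinate hyperplane $H_i^0$. By Lemma \ref{Lem:num} the Kauer move $\mu_i$ together with its induced derived equivalence identifies the faces of $\Delta(A_G)$ in the half-space $H_i^+$ with those of $\Delta(A_{\mu_i(G)})$ in $H_i^-$. Composing these bijections yields $f_j(A_G)=f_j(A_{\mu_i(G)})$ for every $j$. Since any two Brauer trees with the same number of edges and the same multiplicity are connected by a finite sequence of Kauer moves, each $f_j$ depends only on $n$ and is a derived invariant.

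\emph{Explicit enumeration.} I reduce to one convenient Brauer tree, e.g.\ the star with a single exceptional central vertex and $n$ leaves, whose algebra is a self-injective Nakayama algebra. The indecomposable 2-term pretilting complexes of this algebra admit a transparent description in terms of arcs, and a $j$-element subset constitutes a face of $\Delta$ exactly when the corresponding arcs form a compatible (non-crossing-type) family. A direct enumeration of such families, carried out by a combinatorial bijection with pairs of lattice paths (or equivalently with pairs of subsets of prescribed sizes), produces $f_{j-1}=\binom{n+j}{j,j,n-j}$, and hence the claimed $f$-polynomial.

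\emph{From $f$ to $h$ and the facet count.} The $h$-polynomial is determined by $f$ via the standard relation (Definition \ref{f-polynomial})
$$\sum_{j=0}^{n} h_{n-j}\,x^{n-j} \;=\; \sum_{j=0}^{n} f_{j-1}\,(x-1)^{n-j}, \qquad f_{-1}:=1.$$
Substituting the trinomial expression for $f_{j-1}$ and collecting the coefficient of $x^{n-j}$, the resulting alternating sum collapses by a Chu--Vandermonde identity to $\binom{n}{j}^2$. Finally, setting $j=n$ in the formula for $f$ yields $f_{n-1}=\binom{2n}{n,n,0}=\binom{2n}{n}$, which is the number of facets of $\Delta(A_G)$ and hence the number of 2-term tilting complexes; derived invariance of this count is immediate from the first step.

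The main obstacle is the second step: pinning down a sufficiently explicit combinatorial model for the 2-term pretilting complexes of the chosen Brauer tree algebra and matching the enumeration of $j$-element compatible families to the trinomial $\binom{n+j}{j,j,n-j}$. The first step is essentially formal once the symmetry and Kauer-move lemmas are in hand, while the third step is a routine combinatorial identity once the formula for $f$ has been established.
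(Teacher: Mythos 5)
Your first and third steps track the paper closely: the reduction to a single tree via the central symmetry of $\mathcal{P}(A_G)$ (Corollary \ref{same convex}, Theorem \ref{thm:main1}) combined with the Kauer-move correspondence (Lemma \ref{Lem:num}) is exactly Theorem \ref{thm:main2} (with the caveat that one needs the inclusion–exclusion over $\tpjtilt(A_G)_i^0$ to glue the two half-counts, which Lemma \ref{Lem:num}(2) supplies), and the passage from $f$ to $h$ is indeed a routine Vandermonde collapse. The problem is your second step, which you yourself flag as ``the main obstacle'': you assert, but do not establish, that the compatible families of indecomposable 2-term pretilting complexes over the star-shaped algebra are counted by $\binom{n+j}{j,j,n-j}$. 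No compatibility criterion is written down, no bijection with pairs of lattice paths is constructed, and this is precisely the nontrivial content of the theorem. As it stands the proof is incomplete at its central point.

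The paper closes this gap by a different idea that avoids any direct enumeration of faces. For the star-shaped tree one checks that $\indtptilt A_G=\{P_i,\,P_i[1],\,(P_i\to P_j)\}$, so the $g$-vectors, after the change of coordinates $[P_i]:=e_1-e_{i+1}$ in $\bbR^{n+1}$, are exactly the roots $e_k-e_l$; hence $\mathcal{P}(A_G)$ is the root polytope $\mathcal{P}_{\bbA_n}$, and $\Delta(A_G)$ is one of its unimodular triangulations (Proposition \ref{volume}). By the Betke--McMullen theorem (Theorem \ref{thm:BM}) the $h$-vector of \emph{any} unimodular triangulation equals the Ehrhart $h^{\ast}$-polynomial of the polytope, which for $\mathcal{P}_{\bbA_n}$ is known to be $\sum_j\binom{n}{j}^2x^{n-j}$ by the growth-series computation in [ABHPS]. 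If you want to keep your combinatorial route, you would need to actually produce the bijection you allude to (this is essentially what Aoki does in [Ao2] by other methods); otherwise the polytope identification is the efficient way to finish.
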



We remark that if a Brauer tree is star-shaped (resp.\ line), then it is a Nakayama algebra (resp.\ zigzag algebra). In these cases, it is shown that the number of 2-term tilting complexes is equal to $\binom{2n}{n}$ by Adachi \cite{Ad} (resp.\ by Aoki \cite{Ao1}). 
Thus, our result can be regarded as a uniform treatment for any Brauer tree algebra. 

As a consequence of our result, we apply the result to the enumeration problem of Coxeter-biCatalan combinatorics studied by Barnard-Reading \cite{BR}. 
This is a ``twin version'' of Coxeter-Catalan combinatorics such as noncrossing partitions, clusters, Cambrian lattices and sortable elements (we refer to the original paper \cite{BR} and also \cite{Re1,Re2,FR} for the interesting background of Coxeter-Catalan combinatorics). 
Let $W$ be the Weyl group of type $\bbA_{n}$ and $c$ a bipartite Coxeter element of $W$. 
In \cite{BR}, it is shown that the \emph{biCambrian fan} $\biCamb(W,c)$ is a simplicial fan and the $h$-vector of the simplicial sphere underlying $\biCamb(W,c)$ is determined \cite[Theorem 2.13]{BR}. 
Applying our result above, we give an alternative proof as follows.

\begin{cor}[Theorem \ref{bicatalan}]
Let $W$ be the Weyl group of type $\bbA_{n}$ and 
$c$ a bipartite Coxeter element of $W$. 
The $f$-polynomial and $h$-polynomial of 
the simplicial sphere underlying the biCambrian fan $\biCamb(W,c)$ are given by the following formulas$:$
$$
f(x) = \sum_{j=0}^{n}\binom{n+j}{j,j,n-j}x^{n-j},\ \ \ h(x) = \sum_{j=0}^{n}\binom{n}{j}^2x^{n-j}.
$$
\end{cor}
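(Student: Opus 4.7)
The plan is to reduce the statement to Theorem \ref{main3} by identifying the simplicial sphere underlying $\biCamb(W,c)$ with the simplicial complex $\Delta(A_G)$ for a conveniently chosen Brauer tree $G$ with $n$ edges. Since Theorem \ref{main3} asserts that the $f$- and $h$-polynomials depend only on $n$, any Brauer tree with $n$ edges suffices; the combinatorial picture is most transparent for the linear Brauer tree with trivial multiplicity, whose associated algebra is the zigzag algebra of type $\bbA_n$ (cf.\ Aoki \cite{Ao1}) and fits naturally with the bipartite orientation of the $\bbA_n$ Dynkin diagram defining the bipartite Coxeter element $c$.

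Next, I would set up the identification of fans. The Cambrian fan $\Camb(W,c)$ is realized as the $g$-vector fan of support $\tau$-tilting modules over the path algebra of $\bbA_n$ equipped with the orientation associated to $c$; via the bijection of Adachi--Iyama--Reiten \cite{AIR}, this is the $g$-fan of 2-term silting complexes. The biCambrian fan $\biCamb(W,c)$, built from $\Camb(W,c)$ together with its antipodal companion $\Camb(W,c^{-1})$, should be identified with the $g$-fan of 2-term tilting complexes over the trivial extension of this path algebra, which is derived equivalent, hence isomorphic as a Brauer tree algebra, to $A_G$ for the linear tree $G$. Under this identification, the rays of $\biCamb(W,c)$ match $\tptilt^1(A_G)$ via the $g$-vector map, and the simplicial structure of $\biCamb(W,c)$ coincides with the face structure of $\Delta(A_G)$ encoded by the $g$-polytope $\calP(A_G)$.

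The main obstacle lies precisely in verifying this matching of combinatorial structures: one must show that a subset $\{[T_1],\dots,[T_j]\}\subseteq\Delta^0$ spans a simplex of $\Delta(A_G)$ (equivalently, $T_1\ds\cdots\ds T_j\in\tptilt(A_G)$) if and only if the corresponding rays of $\biCamb(W,c)$ span a cone. The symmetry of $\calP(A_G)$ relative to the origin (Corollary \ref{same convex}) matches the antipodal symmetry built into $\biCamb(W,c)$, and the unimodular triangulation supplied by Proposition \ref{volume} matches the simplicial nature of the fan.

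Once the identification of simplicial complexes is established, the formulas follow at once from Theorem \ref{main3} by reading off
\[
f(x)=\sum_{j=0}^{n}\binom{n+j}{j,j,n-j}x^{n-j},\qquad h(x)=\sum_{j=0}^{n}\binom{n}{j}^{2}x^{n-j},
\]
with total count $\binom{2n}{n}$ of maximal simplices, which is the expected biCatalan number in type $\bbA_n$.
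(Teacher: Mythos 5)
Your overall strategy coincides with the paper's: choose the linear Brauer tree $G$, identify the simplicial sphere underlying $\biCamb(W,c)$ with $\Delta(A_G)$, and then quote Theorem \ref{main3}. But the proposal stops exactly where the actual work lies. You write that ``the main obstacle lies precisely in verifying this matching of combinatorial structures'' and then do not verify it; the claimed identification of $\biCamb(W,c)$ with the $g$-fan of $2$-term tilting complexes over the trivial extension of the path algebra is asserted, not proved, and no result is invoked that would deliver it. This is a genuine gap: once Theorem \ref{main3} is available, the fan identification \emph{is} the entire content of the corollary. There is also a faulty inference along the way: ``derived equivalent, hence isomorphic as a Brauer tree algebra'' is false in general (derived equivalence of Brauer tree algebras only forces equality of the number of edges and of the multiplicity, by Rickard), and since the $g$-vector fan itself --- as opposed to its $f$-vector --- is not preserved under derived equivalence, you cannot freely replace the trivial extension by $A_G$ at the level of fans.

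The paper closes this gap by a purely lattice-theoretic chain rather than a direct fan comparison. For the linear tree $G$, Adachi's theorem gives a poset isomorphism $\ttilt A_G \simeq \tsilt(A_G/\soc(A_G))$; one computes $A_G/\soc(A_G)\cong \Pi/\rad^2(\Pi)$ for $\Pi$ the preprojective algebra of type $\bbA_n$; the theorem of Demonet--Iyama--Reading--Reiten--Thomas identifies $\tsilt(\Pi/\rad^2(\Pi))\simeq \mathrm{Tor}(\Pi/\rad^2(\Pi))\simeq W/\Theta_{biC}$ as posets; and finally, since the cones of $g$-vectors determine and are determined by the poset structure of $\ttilt A_G$ by \cite[Corollary 6.13]{DIJ}, the poset isomorphism upgrades to a combinatorial equivalence of $\Delta(A_G)$ with the simplicial sphere underlying $\biCamb(W,c)$. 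If you want to keep your route through Cambrian fans and trivial extensions, you must supply an analogous bridge --- for instance a proof that the $g$-fan of the relevant symmetric algebra is the common refinement of $\Camb(W,c)$ and $\Camb(W,c^{-1})$ --- which cannot be taken for granted and is not what the paper does.
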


While working on this project, we were informed that Toshitaka Aoki also determines the number of 2-term tilting complexes of Brauer tree algebras by an entirely different method \cite{Ao2}. We thank him for sharing his knowledge. 

\section{Preliminaries}
In this section, we recall some terminologies related to simplicial complexes and 
Brauer tree algebras, 2-term tilting complexes.


\subsection{Brauer tree algebras}

We first summarize some definitions related to graphs.

\begin{dfn}
A {\em graph} is a triple $G:= (G_0, G_1, C_G)$ of sets $G_0, G_1$
and a map $C_G \colon G_1 \to \{\{x, y\} \mid (x, y) \in G_0 \times G_0\}$.
We usually draw the graph $G$ as a picture with
vertices $x$ bijectively corresponding to elements in $G_0$
and with edges $a$ bijectively corresponding to elements in $G_1$
that connects vertices $x$ and $y$
if and only if $C_G(a) = \{x, y\}$.
If $x = y$, then the edge $a$ connects $x$ and $x$, which is called a {\em loop}.
\begin{enumerate}
\item
Elements of $G_0$ (resp.\ $G_1$) are called {\em vertices} (resp.\ {\em edges}) of $G$.
\item
For each $a \in G_1$ and $x \in G_0$ we say that $a$ is {\em connected} to $x$ (or $x$ is connected to $a$) if $x \in C_G(a)$.

\item
A {\em subgraph} of $G$ is a graph $H = (H_0, H_1, C_H)$ such that $H_0 \subseteq G_0, H_1 \subseteq G_1$ and $C_H$ is the restriction of $C_G$ to $H_1$.
\end{enumerate}
\end{dfn}

\begin{exm}
Let $n$ be a non-negative integer.
Then $\bbA_{n+1}$ is the graph $G = (G_0, G_1, C_G)$, where
$G_0:= \{0, 1, \dots ,n\}$, $G_1:=\{\al_1, \al_2, \dots, \al_{n}\}$
and $C_G(\al_i):= \{i-1, i\}\ (1 \le i \le n)$.
Then $\bbA_{n+1}$ is presented by the following picture.
$$
\xymatrix{
0 \ar@{-}[r]^{\al_1} & 1 \ar@{-}[r]^{\al_2} &\cdots \ar@{-}[r]^{\al_{n}} & n.
}
$$
\end{exm}

\begin{dfn}
Let $G, G'$ be graphs and $n$ a non-negative integer.
\begin{enumerate}
\item
A graph morphism from $G$ to $G'$ is a pair
$f = (f_0, f_1)$ of maps $f_0\colon G_0 \to G'_0$ and
$f_1\colon G_1 \to G'_1$ such that $f_0(C_G(a)) = C_{G'}(f_1(a))$
for all $a \in G_1$.
By abuse of notation we write both $f_0$ and $f_1$ simply by $f$.

\item
A graph morphism $W = (W_0, W_1) \colon \bbA_{n+1} \to G$ is called an {\em oriented walk} in
$G$ of {\em length} $n$ from $W(0)$ to $W(n)$.
An oriented walk $W$ is called a {\em cycle} if $W(0) = W(n)$ and $n \ge 1$, and $W$ is said to be {\em simple} if $W_0$ is injective (note in this case that 
automatically $W_1$ turns out to be also injective).

\item
$G$ is said to be {\em connected} if for each pair $(x, y)$ of
vertices of $G$, there exists an oriented walk from $x$ to $y$.

\item
$G$ is called a {\em tree} if there exists no cycles in $G$.

\item
The {\em image} $w$ of a simple oriented  walk $W$ is the unique subgraph $w = (w_0, w_1, C_w)$ of $G$ such that
$w_0 = \Im W_0$ and $w_1 = \Im W_1$, which is called a {\em walk} of length $n$ between $W(0)$ and $W(n)$.

\item
If we put $a_i:= W(\al_i)$ for all $i$, then a walk $w$ of length $n$ ($\ge 1$) given by an oriented walk $W$ 
turns out to be a subgraph of $G$ of the form
$$
\xymatrix{
x_0 \ar@{-}[r]^{a_1}& x_1 \ar@{-}[r]^{a_2} &\cdots \ar@{-}[r]^{a_n} &x_n,
}
$$
which is denoted by
$$
a_1a_2\cdots a_n
.$$
\end{enumerate}
\end{dfn}

In this paper {\em we always assume that oriented walks are simple}, and hence we may regard walks as subgraphs.

\begin{dfn}
Let $G$ be a tree and $H, H'$ be subgraphs of $G$.
Then $H \cap H'$ (resp.\ $H\cup H'$) is the unique subgraph of $G$
with vertex set $H_0 \cap H'_0$ (resp.\ $H_0 \cup H'_0$)
and with edge set $H_1 \cap H'_1$ (resp.\ $H_1 \cup H'_1$).
These notations apply to walks in $G$ as subgraphs of $G$ to have
definitions of $w \cap w'$ and $w \cup w'$ for walks $w, w'$ in $G$.
Note that if $w_0 \cap w'_0 \ne \emptyset$.
then $w \cap w'$ is again a walk in $G$.
\end{dfn}

Next we recall the definition of Brauer tree algebras. 
We refer to the survey paper \cite{S2} for the background of Brauer tree (graph) algebras. 
In particular, we remark that Brauer tree algebras are representation-finite symmetric algebras (see \cite{S1} for example).

Recall that a {\it cyclic ordering} on a finite set $V$ is a cyclic permutation
of $V$ of order $\#V$.
Note that if $\#V = 1$, then it is just the identity of $V$.

\begin{dfn}\label{Brauer tree} 
We fix a field $\k$.
\begin{enumerate}
    \item A {\em Brauer tree} is a triple $(G, m, S)$, where 
      \begin{enumerate}
          \item $G = (G, \si)$ is a pair consisting of a finite connected tree $G$
          and a family $\si = (\si_x)_{x \in G_0}$ of cyclic 
          orderings $\si_x$ on the set $U_x$ of edges connected to each vertex $x$;
          \item $m$ is an integer $\ge 1$, called the {\it multiplicity}; and
          \item $S$ is a vertex of $G$, called the {\it exceptional vertex}.
      \end{enumerate}
       We usually present the pair $(G, \si)$ by an embedding of $G$ into a plane that gives $\si_x$ as the counterclockwise
    cyclic ordering on the set of edges connected to $x$, and
    usually denote this pair just by $G$.
    We set $|G|:= \# G_1$.
    \item A {\em Brauer quiver} $Q_G = (Q_0, Q_1, s, t)$ associated to a Brauer tree $(G,m,S)$
    is defined as follows:
   {  \begin{enumerate}
        \item $Q_0:= G_1$,
        \item $Q_1:= \{(\si_x(i),i) \mid i\in G_1, x \in C_G(i)\}$,
        \item $s((\si_x(i),i)):= i, t((\si_x(i),i)):= \si_x(i)$ for all $i\in G_1, x \in C_G(i)$.
    \end{enumerate}
        Namely we have arrows $(\si_x(i),i)\colon i \to \si_x(i)$ for all $i\in G_1, x \in C_G(i)$.
    Note that there is a bijection from the set $G_0$
    to the set of oriented cycles in $Q_G$
    sending each vertex $x \in G_0$ to the oriented cycle
    $$
    i \to \si_x(i) \to \si_x^2(i) \to \dots \to \si_x^{n_x}(i) = i,
    $$
    where $i \in U_x$ and $n_x:= \# U_x$.
   In particular if $x$ is connected to only one edge $i$, then there is one loop $i \to i$ because $\si_x(i) = i$.}
Since $G$ is a tree, we have a coloring of $G_0$ by two colors,
say $\al$ and $\be$ such that if an edge is connected to vertices $x$ and $y$, 
then the color of these two vertices are different.
The coloring is unique up to the exchange of $\al$ and $\be$.
Oriented cycles corresponding to vertices with color $\al$ (resp.\ $\be$)
are called $\al$-cycles (resp.\ $\be$-cycles).
We assume that the oriented cycle corresponding to the exceptional vertex $S$ is an $\al$-cycle.
    %
    
    \item Now let $i$ be a vertex of $Q_G$. 
    Let $C_{\alpha}(i)$ be the $\al$-cycle starting at $i$ and $C_{\beta}(i)$ the $\be$-cycle starting at $i$. 
    Moreover, let $\al_i$ be the arrow belonging to $C_{\alpha}(i)$ starting at $i$, 
    $\al^i$ the arrow belonging to $C_{\alpha}(i)$ ending at $i$,
    $\be_i$ the arrow belonging to $C_{\beta}(i)$ starting at $i$ 
    and $\be^i$ the arrow belonging to $C_{\beta}(i)$ ending at $i$.
    Then the {\em Brauer tree algebra} associated to a Brauer tree $G=(G,m,S)$ is defined by the algebra $A_G=\Bbbk Q_{G} /I_G$, 
    where the ideal $I_G$ of $\Bbbk Q_G$ is generated by the elements:
    $\al_i \be^i$, $\be_i \al^i$, 
    $C_{\alpha}(S)^m - C_{\beta}(S)$ and $C_{\alpha}(i) - C_{\beta}(i)$ if $i\not=S$.
\end{enumerate}
\end{dfn}

\begin{rmk}
When the multiplicity ($> 1$) is not on a vertex of degree 1, then we can delete all loops from the quiver using the commutativity relations,
and in this way we obtain a quiver $Q$ without loops and an admissible ideal $I$ of $\k Q$
such that $A_G\iso \k Q/I$
(see the example below).
\end{rmk}

\begin{exm}
Consider the following Brauer tree $G$ with the multiplicity 1:
$$
\vcenter{
\xymatrix@M=0pt{
&& \bullet\\
\bullet&\bullet& \bullet &\bullet
\ar@{-}"1,3";"2,3"^1
\ar@{-}"2,3";"2,4"_2
\ar@{-}"2,1";"2,2"_4
\ar@{-}"2,2";"2,3"_3
}}
$$ 
First we draw arrows of the Brauer quiver between edges of $G$
(= vertices of $Q_G$):
$$
\vcenter{
\xymatrix@M=0pt@C=30pt@R=30pt{
&& \bullet\\
\bullet&\bullet& \bullet &\bullet
\ar@{-}"1,3";"2,3"^{1}="1"_{}="1'"
\ar@{-}"2,3";"2,4"_2="2"_{}="2'"
\ar@{-}"2,1";"2,2"_4="4"^{}="4'"|{}="4''"
\ar@{-}"2,2";"2,3"_3="3"^{}="3'"
\ar@/_/"2'";"1"_{\al_2}
\ar@/_/"1'";"3'"_{\al_1}
\ar@/_1pc/"3";"2"_{\al_3}
\ar@/_1pc/"3'";"4'"_{\be_3}
\ar@/_1pc/"4";"3"_{\be_4}
\ar@/_3pc/"2";"2'"+<0pt,5pt>_{\be_2}
\ar@/_3pc/"4'"+<0pt,5pt>;"4"_{\al_4}
\ar@/_3pc/"1";"1'"+<-5pt,0pt>_{\be_1}
}}
$$ 
Second we change the edges to vertices to obtain the Brauer quiver $Q_G$:
$$
\xymatrix{
&&1 \ar@(ru,lu)[]_{\be_1}\\
4 \ar@(ul,dl)[]_{\al_4}& 3 && 2 \ar@(dr,ur)[]_{\be_2}
\ar"1,3";"2,2"_{\al_1}
\ar"2,2";"2,4"_{\al_3}
\ar"2,4";"1,3"_{\al_2}
\ar@/_/"2,2";"2,1"_{\be_3}
\ar@/_/"2,1";"2,2"_{\be_4}
}
$$
The ideal $I_G$ is given by the following generators:
8 zero relations
$$
\al_1\be_1,\ \be_1\al_2, \ \al_2\be_2,\  \be_3\al_1,\ 
\al_3\be_4,\ \be_2\al_3, \ \al_4\be_3,\ \be_4\al_4
$$
and 4 commutativity relations
$$
\al_2\al_3\al_1 - \be_1, \al_3\al_1\al_2 - \be_2,
\al_1\al_2\al_3 - \be_4\be_3, \al_4 - \be_3\be_4.
$$
As explained in the remark above we can delete loops to make $I_G$
an admissible ideal by using the commutativity relations involving loops.
The final quiver is given as
$$
\xymatrix{
&&1 \\
4 & 3 && 2
\ar"1,3";"2,2"_{\al_1}
\ar"2,2";"2,4"_{\al_3}
\ar"2,4";"1,3"_{\al_2}
\ar@/_/"2,2";"2,1"_{\be_3}
\ar@/_/"2,1";"2,2"_{\be_4}
}
$$
with relations
$$
\al_1\al_2\al_3\al_1,\ \al_2\al_3\al_1\al_2, \ \al_2\al_3\al_1\al_2,\  \be_3\al_1,\ 
\al_3\be_4,\ \al_3\al_1\al_2\al_3, \ \be_3\be_4\be_3,\ \be_4\be_3\be_4,\ 
\al_1\al_2\al_3 - \be_4\be_3.
$$
The structure of the projective indecomposable right module $P_i$ corresponding to a vertex $i$ is given as follows (note that right modules are presented by representations of the opposite quiver):
$$
P_1 =\psmat{1\\2\\3\\1},
P_2 = \psmat{2\\3\\1\\2},
P_3 =\left(\!\!\!\!{\tiny
\vcenter{\xymatrix@C=5pt@R=0pt{
&3\\
1\\
&&4\\
2\\
&3
\ar@{-}"1,2";"3,3"
\ar@{-}"3,3";"5,2"
\ar@{-}"1,2";"2,1"
\ar@{-}"2,1";"4,1"
\ar@{-}"4,1";"5,2"
}}}\!\!\!\!\right),
P_4 = \psmat{4\\3\\4},
$$
where each $i \in \{1,\dots, 4\}$ in the diagram stands for a basis vector in the vector space corresponding to the vertex $i$ and mapped by arrows downward (along the lines if they are drawn between basis vectors as in $P_3$).
Since $A_G = P_1 \ds \cdots \ds P_4$ as a right $A_G$-module, we sometimes say that the (right) structure of $A_G$ is given by
$$
\psmat{1\\2\\3\\1}\ds
\psmat{2\\3\\1\\2}\ds
\left(\!\!\!\!{\tiny
\vcenter{\xymatrix@C=5pt@R=0pt{
&3\\
1\\
&&4\\
2\\
&3
\ar@{-}"1,2";"3,3"
\ar@{-}"3,3";"5,2"
\ar@{-}"1,2";"2,1"
\ar@{-}"2,1";"4,1"
\ar@{-}"4,1";"5,2"
}}}\!\!\!\!\right)\ds
\psmat{4\\3\\4}.
$$
\if0
\item The nonsimple symmetric Nakayama algebras of $e$ vertices with nilpotency $em+1$ are Brauer tree algebras. 
For example, a Brauer tree $G:=(G,S,2)$ with $3$-edges is the following form:
$$
\vcenter{
\xymatrix{
&& \bullet\\
\bullet&\bullet& S &\bullet\\
&& &
\ar@{-}"1,3";"2,3"^1
\ar@{-}"2,3";"2,4"_2
\ar@{-}"2,1";"2,2"_4
\ar@{-}"2,2";"2,3"_3
}}.
$$
Then the Brauer quiver $Q_G$ is the following form: 
$$
\vcenter{
\xymatrix{
 1\\
  & 2\\
 3&
\ar"1,1";"2,2"^{\al_1 }
\ar"2,2";"3,1"^{\al_2 }
\ar"3,1";"1,1"^{\al_3}
\ar@(ul,dl)"1,1";"1,1"_{\be_1 }
\ar@(dr,ur)"2,2";"2,2"_{\be_2 }
\ar@(ul,dl)"3,1";"3,1"_{\be_3 }
}}.
$$
and the ideal $I_G$ is generated by 
$\be\al$, $\al\be$, $\al^6-\be$.
In this case, $\be=\al^6$ in $\Bbbk Q_G/I_G$.
Then we put $Q_G^{\prime}:=Q_G\backslash\{\be_1,\be_2,\be_3\}$ and $I_G^{\prime}$ is generated by $\al^7$. 
Therefore we have 
$$A_G= \Bbbk Q_G / I_G \cong \Bbbk Q_G^{\prime}/I_G^{\prime} .$$
\fi
\end{exm}



\begin{asp}
Throughout the rest of this paper, unless otherwise stated, 
we only consider Brauer trees with multiplicity 1, which 
does not lose any generality (see Proof of Theorem \ref{proof4.1} for details).
\end{asp}

\begin{ntn}\label{notation}
Throughout the rest of this paper, 
{let $G$ be a Brauer tree and $A_G$ the associated Brauer tree algebra with the Brauer quiver $Q_{G}=:Q$.
We set $n:= |G| = \# Q_0$, and
$Q_0 = \{1,\dots, n\}$.
For each $i \in Q_0$ the idempotent of $A_G$ corresponding to $i$ is denoted by $e_i$.
If there is no confusion, we denote $A_G$ just by $A$.}

{Now let $A$ be an algebra.
We set $|A|$ to be the number $d$ in the direct sum decomposition $A = \Ds_{i=1}^d P_i$ of $A$ into indecomposable direct summands $P_i$.}
We denote by $\mod A$ the abelian category of finite-dimensional  right 
$A$-modules 
and by $\proj A$ the full subcategory of $\mod A$ consisting of projective modules.
Moreover, we denote by $\Kb(\proj A)$ the bounded homotopy category of $\proj A$. 
\end{ntn}


\subsection{2-term tilting complexes}
We recall basic definitions of 2-term tiling complexes.

\begin{dfn}\label{two-term tilt}
\begin{enumerate}
\item
We call a complex $P = (P^i,d^i)$ in $\Kb(\proj A)$ \emph{2-term} if 
$P^i = 0$ for all $i\not= 0,-1$. 
\item We call a complex $P$ in $\Kb(\proj A)$ \emph{pretilting} if $\Hom_{\Kb(\proj A)}(P,P[i])=0$ for any $i\neq0$.

\item We call a complex $P$ in $\Kb(\proj A)$ \emph{tilting} if it is pretilting and satisfies the condition that $\thick(P)$ = $\Kb(\proj A)$, where $\thick(P)$ is the smallest full subcategory of $\Kb(\proj A)$ which contains $P$ and is closed under cones, shifts, direct summands and isomorphisms.
\end{enumerate}
\end{dfn}

We denote by $\tptilt (A)$ (resp.\ $\indtptilt (A)$, $\ttilt (A)$)
the set of isoclasses of basic 2-term pretilting complexes (resp.\ indecomposable 2-term pretilting complexes, 2-term tilting complexes) of $\Kb(\proj A)$. 

\begin{rmk}
Many arguments in section 2 also work for 2-term silting complexes 
of $\Kb(\proj A)$ for any finite dimensional algebra $A$.
In this paper, we only deal with symmetric algebras and therefore silting complexes coincide with tilting complexes. 
To avoid the confusion, we only discuss the case of 2-term tilting complexes. 
\end{rmk}

Let $j$ be an integer such that $1\leq j\leq n$.
We define 
$$\tpjtilt (A):=\{T\in \tptilt (A) \mid |T|=j \}.$$ 
In particular, we have $\tptilt^1 (A) = \indtptilt (A)$ and  
$\tptilt^n (A) = \ttilt (A)$ (see \cite[Proposition 3.3]{AIR}). 
We recall the following basic property.

\begin{lem}\cite[Proposition 2.15]{AIR}\label{no common}
For any $T\in \tptilt(A)$, we write $T= (\cdots 0 \to T^{-1} \to T^{0} \to 0 \cdots )$. 
Then we have $\add (T^0) \cap \add (T^{-1}) = {0}$. 
\end{lem}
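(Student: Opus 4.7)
The plan is to pass to a minimal representative of $T$ and then derive a contradiction with the pretilting condition $\Hom_{\Kb(\proj A)}(T,T[1])=0$. First I would replace $T$ by a representative in which the differential $d\colon T^{-1}\to T^{0}$ is a radical morphism of $\proj A$; every complex in $\Kb(\proj A)$ admits such a minimal form, because any invertible block of $d$ yields a contractible direct summand that vanishes in the homotopy category. The statement of the lemma is meant for this minimal form.

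Next, suppose for contradiction that some indecomposable projective $P$ appears as a direct summand of both $T^{-1}$ and $T^{0}$. Write $T^{-1}=P\oplus X$ and $T^{0}=P\oplus Y$, with associated inclusions $\iota^{-1}\colon P\to T^{-1}$, $\iota^{0}\colon P\to T^{0}$ and projections $\pi^{-1}\colon T^{-1}\to P$, $\pi^{0}\colon T^{0}\to P$. I then consider the chain map $f\colon T\to T[1]$ whose only nonzero component is $f^{-1}:=\iota^{0}\pi^{-1}\colon T^{-1}\to T^{0}$; compatibility with differentials is automatic because $T$ is 2-term. By the pretilting hypothesis $f$ represents the zero class in $\Hom_{\Kb(\proj A)}(T,T[1])$, so $f$ is null-homotopic: there exist $s^{-1}\colon T^{-1}\to T^{-1}$ and $s^{0}\colon T^{0}\to T^{0}$ satisfying $f^{-1}=s^{0}d-ds^{-1}$.

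To finish, I would compose this identity with $\iota^{-1}$ on the right and $\pi^{0}$ on the left, yielding
\[
1_{P}=\pi^{0}\iota^{0}\pi^{-1}\iota^{-1}=\pi^{0}s^{0}d\iota^{-1}-\pi^{0}ds^{-1}\iota^{-1}.
\]
Both terms on the right factor through the radical morphism $d$. Since a morphism between indecomposable projectives is either an isomorphism or a radical morphism, and since the radical of the additive category $\proj A$ is a two-sided ideal, each of the two composites lies in $\rad\End_{A}(P)$. But $\End_{A}(P)$ is local, so $1_{P}\notin\rad\End_{A}(P)$, a contradiction.

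The main thing to verify carefully is the preliminary reduction to a representative with radical differential together with the assertion that a composite factoring through such a radical morphism ends up in $\rad\End_{A}(P)$; these are standard facts for finite-dimensional algebras, but the whole locality argument hinges on them. Once they are in place, the contradiction is purely formal.
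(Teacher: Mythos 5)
The paper gives no proof of this lemma, citing it as \cite[Proposition~2.15]{AIR}; your argument is the standard proof of that cited fact and is correct: after passing to the minimal (radical-differential) representative, the null-homotopy of the chain map $T\to T[1]$ built from a common summand $P$ forces $1_P$ to factor through the radical morphism $d$, contradicting locality of $\End_A(P)$. All the steps you flag for verification (existence of the minimal form, and that composites through a radical morphism land in $\rad\End_A(P)$) are indeed standard for finite-dimensional algebras, so nothing is missing.
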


Then the following classes of 2-term pretilting complexes play important role\textcolor{red}{s} in this paper.

\begin{dfn}\label{def upper tilt}
Let $i \in Q_0$.
For $T\in \tptilt(A)$, we write $T= (\cdots 0 \to T^{-1} \to T^{0} \to 0 \cdots )$. 
Then we denote by 
$$
\begin{aligned}
\tptilt(A)_i^\leq &:= \{ T\in \tptilt(A) \mid e_i A \not\in \add T^{0} \},\\
\tptilt(A)_i^\geq &:= \{ T\in \tptilt(A) \mid e_i A \notin \add T^{1} \}, \\
\tptilt(A)_i^0 &:=(\tptilt(A)_i^\leq) \cap (\tptilt(A)_i^\geq).
\end{aligned}
$$
This terminology will be justified in subsection \ref{subsec g-vec}. 

Similarly we define $\tpjtilt (A)_i^\leq, \tpjtilt (A)_i^\geq$ and $\tpjtilt (A)_i^0$.
\end{dfn}

\subsection{Simplicial complexes}
In this subsection, we recall some basic terminologies related to simplicial complexes. 

Let $\Delta^0$ be a finite set. 
An {\em $($abstract$)$ simplicial complex} $\Delta$ on $\Delta^0$ is a set of subsets $F$ of $\Delta^0$ satisfying (i) if $x\in\Delta^0$, then $\{x\}\in\Delta$, (ii) if $F\in\Delta$ and $F'\subset F$, then $F'\in\Delta$. 
Elements of $\Delta$ are called \emph{faces} or \emph{simplices}.
A {\em $j$-dimensional face}, or a {\em $j$-dimensional simplex}, is an element of $\Delta$ of cardinality $j+1$.
We denote by $\Delta^j$ the subset of $\Delta$ consisting of all faces of dimension $j$. 

\begin{dfn}[See \cite{Z}] \label{f-polynomial} 
Let $\Delta\not = \emptyset$ be a simplicial complex of dimension $n-1$.
Then the sequence $(f_{-1}:=1,f_0,\cdots,f_{n-1})$
with $f_j:=\#\Delta^{j}\ (-1 \le j \le n-1)$
is called the {\em f-vector} of $\Delta$.
We set $F_\Delta(x):=\sum_{j=0}^{n} f_{j-1} x^{n-j}$, and call it the {\em $f$-polynomial} of $\Delta$. 
The polynomial  $F_\Delta(x-1)$ is called the {\em h-polynomial} of $\Delta$.
If we write $F_\Delta(x-1)=\sum_{j=0}^{n}h_j x^{n-j}$, 
then $(h_{0},h_1\cdots,h_{n})$ is called the {\em h-vector} of $\Delta$.
\end{dfn}

Note that the $f$-vector uniquely determines the $h$-vector,
and vice versa. 
We now recall basic terminologies related to lattice polytopes.
\begin{dfn} \label{dfn:triang}
\ 
\begin{enumerate}
\item {For a subset $X$ of $\bbR^n$ the {\em convex hull} $\conv(X)$ of $X$ is the smallest convex set that contains $X$.  If $X = \{v_0,\dots, v_m\}$ for some positive integer $m$, we set $\conv(v_0,\dots, v_m):= \conv(X)$.
    Note that it is given by
$$
\conv(v_0 ,\cdots, v_m) =\left\{\sum_{i=0}^m a_iv_i \right.\left|\,
\sum_{i=0}^m a_i \leq 1, a_i \in \bbR_{\ge 0} \text{ for all }i = 0,\dots, m\right\}.
$$}
A {\em lattice polytope} $\calP$ in $\bbR^n$ is the convex hull $\conv(v_0,\dots, v_m)$ of finitely many points $v_0,\dots,v_m$ in the lattice $\bbZ^n$.
    Two lattice polytopes $\calP$ and $\calQ$ are {\em lattice equivalent} 
    if they are related by an affine map.
    \item A {\em lattice simplex} $\calP$ in $\bbR^n$ is a lattice polytope such that the generating points $v_0,\cdots,v_n$ are affinely independent.
    \item A {\em unimodular simplex} is a lattice polytope that is lattice equivalent to the standard lattice simplex, i.e. the convex hull of the origin $0$ together with the standard unit vectors $e_i$ over all $1 \leq i \leq n$. 
    Equivalently, unimodular simplices are characterized as the $n$-dimensional lattice polytopes of minimal possible Euclidean volume, $1/n!$.
    Note that a unimodular simplex is a lattice simplex.
    \item A {\em subdivision} of a $n$-dimensional lattice polytope $\calP$ is a finite collection $S$ of lattice polytopes  such that
    \begin{enumerate}
        \item every face of a member of $S$ is in $S$,
        \item any two elements of $S$ intersect in a common (possibly empty) face,
        \item the union of the polytopes in $S$ is $\calP$.
    \end{enumerate}
    The maximal ($n$-dimensional) polytopes in $S$ are called {\em cells} of $S$.
    \item A {\em triangulation} is a subdivision of a polytope for which each cell of the subdivision is a simplex. 
    The triangulation is {\em unimodular} if so is every cell.
    \item For a lattice polytope $\calP$ in $\bbR^n$ and a positive integer $k$, 
    $$k\calP:=\{kt \mid t\in \calP\}$$ 
    is the {\em dilate} of $\calP$.
    Then the {\em lattice point enumerator} $E(\calP,x)$ is defined by 
    $$ 
    E(\calP,x):=\sum_{k\geq 0} \# (k\calP \cap \bbZ^n) x^k.
    $$
    It is well known that $E(\calP,x)$ is of the form 
    $$
    E(\calP,x)=\frac{h^{\ast}(\calP,x)}{(1-x)^{n+1}},
    $$
    where $h^{\ast}(\calP,x)$ is a polynomial of degree at most $n$ (see \cite{E}).
\end{enumerate}
\end{dfn}

Note that if a lattice polytope
$
\calP=\conv(v_0 ,\cdots, v_n)
$
has a triangulation $S$, 
{then a simplicial complex on the set 
$\{ v_0,\cdots,v_n \}$ is induced by $S$, which we denote by $\calP_{S}$.}
Here we recall the following classical result.

\begin{thm}{\cite[Theorem 2]{BM}, \cite[Section 9.3]{DRS}} \label{thm:BM}
If a lattice polytope $\calP$ has a unimodular triangulation $S$,
then we have 
$$
h^{\ast}(\calP,x)= E_{\calP_{S}}(x-1).
$$
In particular, the $h$-vector, and hence the $f$-vector, of $\calP_S$ is independent of the choice of unimodular triangulations. 
\end{thm}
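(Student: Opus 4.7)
The plan is to compute $E(\calP,x)$ by partitioning the lattice points of each dilate $k\calP$ according to the relatively open face of the triangulation that contains them, and then to sum the resulting interior Ehrhart series face-by-face. Since $S$ triangulates $\calP$, every lattice point of $k\calP$ lies in the relative interior of exactly one face $\sigma \in S$; writing
$$E^\circ(\sigma,x) := \sum_{k\ge 0} \#\bigl(\operatorname{relint}(k\sigma)\cap\bbZ^n\bigr)\,x^k,$$
this gives a face-partition identity
$$E(\calP,x) \;=\; \sum_{\sigma\in S} E^\circ(\sigma,x),$$
where the sum includes the empty face with the convention $E^\circ(\emptyset,x)=1$.

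The next step is the Ehrhart computation on a single \emph{unimodular} $j$-simplex $\sigma$. By definition of unimodularity, a lattice-preserving affine isomorphism identifies $\sigma$ with the standard simplex $\conv(0,e_1,\dots,e_j)$. Enforcing strict positivity on the barycentric coordinates via the shift $y_i = z_i-1$, a direct count yields $\#(\operatorname{relint}(k\sigma)\cap\bbZ^n)=\binom{k-1}{j}$, so
$$E^\circ(\sigma,x) \;=\; \sum_{k\ge 0}\binom{k-1}{j}x^k \;=\; \frac{x^{j+1}}{(1-x)^{j+1}}.$$
Grouping faces of $S$ by dimension and writing $f_j$ for the number of $j$-dimensional faces of $\calP_S$ (with $f_{-1}=1$), the face-partition identity becomes
$$E(\calP,x) \;=\; \sum_{j=-1}^{n} f_j\cdot\frac{x^{j+1}}{(1-x)^{j+1}}.$$
Multiplying through by $(1-x)^{n+1}$ clears denominators and produces the polynomial identity
$$h^*(\calP,x) \;=\; \sum_{j=-1}^{n} f_j\,x^{j+1}(1-x)^{n-j},$$
which, after an index shift and reconciliation with the conventions of Definition~\ref{f-polynomial}, is precisely $E_{\calP_S}(x-1)$.

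The independence assertion is then automatic: $h^*(\calP,x)$ is intrinsically determined by the lattice polytope $\calP$ through its Ehrhart series, so any quantity equal to it on the nose must share that intrinsic character; hence the $h$-vector (and, by the linear change of basis relating $f$- and $h$-vectors, the $f$-vector) of $\calP_S$ cannot depend on the particular unimodular triangulation $S$ chosen.

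The main technical obstacle is the interior lattice point count $\#(\operatorname{relint}(k\sigma)\cap\bbZ^n)=\binom{k-1}{j}$ for a unimodular $j$-simplex: this is exactly where the unimodularity hypothesis enters, since a non-unimodular lattice simplex would contain additional interior lattice points unaccounted for by the formula, and the clean identity $E^\circ(\sigma,x)=x^{j+1}/(1-x)^{j+1}$ would fail. A secondary point requiring care is the bookkeeping between the $n$-dimensional polytope $\calP$ and the (at most $n$-dimensional) simplicial complex $\calP_S$ when translating $\sum_{j} f_j\,x^{j+1}(1-x)^{n-j}$ into the form $E_{\calP_S}(x-1)$ under the indexing conventions fixed in Definition~\ref{f-polynomial}.
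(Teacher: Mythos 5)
The paper does not prove this statement at all: Theorem~\ref{thm:BM} is quoted as a classical result with pointers to \cite{BM} and \cite{DRS}, so there is no internal proof to compare against. What you have written is the standard Betke--McMullen argument, and it is correct in substance: for $k\ge 1$ the relative interiors of the dilated faces $k\sigma$, $\sigma\in S$, partition $k\calP$; a unimodular $j$-simplex has exactly $\binom{k-1}{j}$ lattice points in the relative interior of its $k$-th dilate, whence $E^\circ(\sigma,x)=x^{j+1}/(1-x)^{j+1}$; and summing over faces and clearing $(1-x)^{n+1}$ gives $h^{\ast}(\calP,x)=\sum_{j=-1}^{n}f_j\,x^{j+1}(1-x)^{n-j}$. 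Since the polynomials $x^{j+1}(1-x)^{n-j}$ are linearly independent and the left-hand side depends only on $\calP$, the $f$-vector (hence the $h$-vector) of $\calP_S$ is independent of $S$, which is the assertion actually used in the paper. Two small points deserve attention. First, your partition statement fails literally at $k=0$ (every dilate $0\sigma$ collapses to the origin), so the series for nonempty faces should be taken over $k\ge 1$, with the empty face supplying the constant term $1$; your conventions produce the right generating functions, but the justification should be phrased accordingly. Second, the final ``reconciliation with Definition~\ref{f-polynomial}'' is not a pure index shift: with the paper's convention the $h$-polynomial of $\calP_S$ is written in decreasing powers of $x$, and one checks that $F_{\calP_S}(x-1)=x^{n+1}h^{\ast}(\calP,1/x)$, i.e.\ the \emph{reverse} of the polynomial you derived rather than the polynomial itself (already for $\calP=[0,1]$ one gets $h^{\ast}=1$ versus $F_{\calP_S}(x-1)=x^2$). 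So the displayed identity should be read as matching the $h$-vector of $\calP_S$ with the $h^{\ast}$-vector of $\calP$ up to this reversal of indexing; either version determines the $f$-vector, so the ``in particular'' clause, which is all the paper relies on, is unaffected.
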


For $v_1,\dots, v_m,\in \bbR^n\setminus\{0\}$,
we set for simplicity 
$$\conv_0(v_1,\dots, v_m):= \conv(0,v_1,\dots, v_m)
.$$
Then we need the following general fact, which will be used later.

\begin{lem}\label{lem:mutate-2-convex}
Let $v_1, \dots, v_{n-1}, w, w' \in \bbR^n\setminus\{0\}$.
Assume
\begin{equation}\label{eq:mutate-2}
w + w' = \sum_{i\in I}v_i
\end{equation}
for some non-empty $I \subseteq \{1,\dots,n-1\}$
with $\# I \le 2$.
Then
$$
\conv_0(v_1,\dots,v_{n-1}, w) \cup \conv_0( v_1,\dots,v_{n-1},w') = \conv_0(v_1,\dots,v_{n-1}, w, w')
$$
and hence the union is convex.
\end{lem}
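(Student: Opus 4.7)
The plan is to establish the nontrivial inclusion $\supseteq$; the reverse inclusion is immediate since both simplices on the left-hand side have vertex sets contained in $\{0, v_1, \dots, v_{n-1}, w, w'\}$, so they lie in the convex hull on the right.

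I would take an arbitrary point $p \in \conv_0(v_1, \dots, v_{n-1}, w, w')$ and write it as
$$p = aw + a'w' + \sum_{i=1}^{n-1} b_i v_i$$
with $a, a', b_i \ge 0$ and $a + a' + \sum_i b_i \le 1$, the remaining mass being the weight on the origin. By the symmetric role of $w$ and $w'$ I may assume $a \le a'$.

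The key manipulation is to rewrite $aw + a'w' = a(w+w') + (a'-a)w'$ and invoke the hypothesis $w + w' = \sum_{i \in I} v_i$ to obtain
$$p = (a'-a)w' + \sum_{i \in I}(b_i + a)v_i + \sum_{i \notin I} b_i v_i.$$
All coefficients are non-negative, so it remains only to verify that they sum to at most $1$. A short computation gives
$$(a'-a) + a \cdot \#I + \sum_{i=1}^{n-1} b_i = \Bigl(a + a' + \sum_i b_i\Bigr) + a(\#I - 2) \le 1 + a(\#I - 2),$$
and the hypothesis $\#I \le 2$ forces $a(\#I - 2) \le 0$. Hence $p \in \conv_0(v_1, \dots, v_{n-1}, w')$, completing the inclusion. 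The symmetric case $a' \le a$ gives $p \in \conv_0(v_1, \dots, v_{n-1}, w)$.

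The only delicate point is the coefficient bookkeeping at the last step, and the bound $\#I \le 2$ enters in an essential way: if $\#I \ge 3$ then the adjusted coefficient sum can exceed $1$ and the rewritten expression would fall outside the target simplex. The non-emptiness of $I$ is also used, since the relation $w+w' = \sum_{i\in I} v_i$ must provide a genuine substitution inside the ambient simplex. I anticipate no further obstacle.
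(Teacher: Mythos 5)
Your proof is correct and follows essentially the same route as the paper's: rewrite the point using the relation $w+w'=\sum_{i\in I}v_i$ so that the smaller of the two coefficients $a,a'$ is absorbed into the $v_i$'s, then check that the total coefficient sum stays at most $1$. The only difference is cosmetic: you treat $\#I=1$ and $\#I=2$ uniformly via the bound $a(\#I-2)\le 0$, whereas the paper separates the two cases; the underlying bookkeeping is identical.
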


\begin{proof}
For simplicity we set
$$
\begin{aligned}
C&:= \conv_0(v_1,\dots,v_{n-1}, w) \ \text{ and}\\
C'&:= \conv_0(v_1,\dots,v_{n-1}, w').
\end{aligned}
$$
It is obvious that $C, C' \subseteq \conv_0(v_1,\dots,v_{n-1}, w, w')$.
To show the converse inclusion, take any $v \in \conv_0(v_1,\dots,v_{n-1}, w, w')$.  Then
$$
v = \sum_{i=1}^{n-1} a_i v_i + aw + a'w'
$$
for some $a_i, a, a' \in \bbR_{\ge 0}$
with $\sum_{i=1}^{n-1} a_i + a +a' \le 1$.

{\bf Case 1.}
$\#I = 2$, say $I = \{j, k\},\ (j \ne k)$.
By the equality \eqref{eq:mutate-2}, we have
$$
\left\{
\begin{aligned}
v&= \sum_{i\ne j,k} a_iv_i + (a_j+a')v_j + (a_k + a')v_k + (a -a')w, \text{and}\\
v&= \sum_{i\ne j,k} a_iv_i + (a_j+a)v_j + (a_k + a)v_k + (a' -a)w'.
\end{aligned}
\right.
$$
In each equality note that the sum of coefficients of vectors is equal to $\sum_{l=1}^{n-1} a_l +a + a'$, which is at most 1.
Hence by looking at the positivity of coefficients we see that
$$
v \in
\begin{cases}
C & \text{if $a \ge a'$},\\
C'& \text{if $a' \ge a$}.
\end{cases}
$$
Thus we have $v \in C \cup C'$.

{\bf Case 2.}
$\#I = 1$, say $I = \{j\}$.
Again by the equality \eqref{eq:mutate-2} we have
$$
\left\{
\begin{aligned}
v&= \sum_{i\ne j} a_iv_i + (a_j+a')v_j + (a -a')w, \text{and}\\
v&= \sum_{i\ne j} a_iv_i + (a_j+a)v_j + (a' -a)w'.
\end{aligned}
\right.
$$
For the former (resp.\ the latter) equality note that the sum of coefficients of vectors is equal to $\sum_{i=1}^{n-1} a_i + a \le 1 - a' \le 1$ (resp.\ $\sum_{i=1}^{n-1} a_i + a' \le 1 - a \le 1$).
In both cases it is at most $1$.
Hence we have
$$
v \in
\begin{cases}
C & \text{if $a \ge a'$},\\
C'& \text{if $a' \ge a$}.
\end{cases}
$$
Thus we have $v \in C \cup C'$.
\end{proof}

\subsection{$g$-vectors and $g$-polytopes}\label{subsec g-vec}
In this subsection, 
we recall the notions of $g$-vectors 
and the convex hull of 2-term tilting complexes (see \cite{AIR,DIJ,H1} for the details).
We also consider the lattice polytope and 
the simplicial complex associated with 2-term tilting complexes. We keep  Notation \ref{notation} and,  in particular, $A$ is assumed to be symmetric.

Let $K_0(\Kb(\proj A))$ be the Grothendieck group of 
$\Kb(\proj A)$.
{We denote by $[P]$ the equivalence class of $P$ in $K_0(\Kb(\proj A))$ for all $P \in \Kb(\proj A)$}. 
Following \cite{DIJ}, we give the following definition. 

\begin{dfn}
We define a simplicial complex $\Delta=\Delta(A)$ on the set
$$\Delta^0:=\{[T]\ |\ T\in\indtptilt(A)\}$$
by saying that a subset $\{[T_1],\ldots,[T_j]\}$ of $\Delta^0$ is a simplex of $\Delta$ if $T_1\oplus\cdots\oplus T_j\in\tptilt (A)$.
\end{dfn}

Hence, the set of $j$-dimensional faces $\Delta^{j}$ corresponds to $\tptilt^{j+1}(A)$, 
and for the $f$-vector $(f_0,f_1,\cdots,f_{n-1})$ of $\Delta(A)$, the entry $f_{j-1}$ is equal to the number $\#\tpjtilt (A)$ for $1\leq j\leq n$. 

\begin{dfn}\label{g-vector}
For each $P \in \Kb(\proj A)$, we set
$$
g(P):= \pmat{g_1\\ \vdots\\ g_n} \in \bbZ^n
$$
if $[P]=\sum_{i=1}^n g_i[e_iA]$ in $K_0(\Kb(\proj A))$. 
Let $T:= \Ds_{i=1}^n T_i \in \ttilt{A}$ with all $T_i$ indecomposable.
Then we define the {\em convex hull} of $T$ by
\begin{equation}\label{eq:dfn-conv-hull}
\conv_0(T):= \conv(0, g(T_1), \dots, g(T_n))
\end{equation}
and the {\em $g$-polytope} of $A$ by 
\begin{equation}
\mathcal{P}(A):=\bigcup_{T \in \tiny\ttilt A}\conv_0(T).
\end{equation}
\end{dfn}

We recall the following important result, which is an analogue of classical result about tilting modules \cite{H1}.

\begin{prp}\label{volume}
\begin{enumerate}
\item Let $T,U\in \ttilt A$. If $T\ncong U$, then $\conv_0(T)$ and 
 $\conv_0(U)$ intersect only at their boundaries.
\item For any $T\in \ttilt A$, the volume of $\conv_0(T)=\frac{1}{n!}$.
\item  $\calP (A)$ is homeomorphic to an $n$-dimensional ball.
\item $\calP (A)$ is a lattice polytope in $\bbR^n$ admitting a  unimodular triangulation, 
which is given by $\conv_0(T)$ with $T\in\ttilt{A}$ as maximal cells. 
\end{enumerate}
\end{prp}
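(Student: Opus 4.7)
My plan is to deduce (2) first from the standard fact in $\tau$-tilting theory, then build up (1) and (4) simultaneously via silting mutation, and finally derive (3) as a topological consequence.

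First, for (2): it is a fundamental fact (see \cite{AIR,DIJ}) that for any 2-term tilting complex $T = \Ds_{i=1}^n T_i$ with $T_i$ indecomposable, the $g$-vectors $g(T_1),\dots,g(T_n)$ form a $\bbZ$-basis of $K_0(\Kb(\proj A)) \iso \bbZ^n$. Hence the integer matrix $M_T := [g(T_1)\mid\cdots\mid g(T_n)]$ satisfies $|\det M_T| = 1$, so $\conv_0(T)$ is a unimodular simplex of Euclidean volume $1/n!$.

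For (1) and (4), I would argue by iterated silting mutation. For any $T \in \ttilt A$ and any indecomposable summand $T_k$ of $T$, there is a unique mutation $\mu_k(T) \in \ttilt A$ obtained by replacing $T_k$ by a different indecomposable $T_k^*$, and an exchange triangle
\[
T_k \to B \to T_k^* \to T_k[1]
\]
(or its sign-reverse). Since $T$ is 2-term, one checks that $B$ lies in $\add(\Ds_{i\ne k} T_i)$ with at most two distinct indecomposable summands appearing, so taking classes in $K_0$ yields
\[
g(T_k) + g(T_k^*) = \sum_{i \in I} g(T_i), \quad I \subseteq \{1,\dots,n\}\setminus\{k\},\ \# I \le 2.
\]
Now apply Lemma \ref{lem:mutate-2-convex} with $v_i = g(T_i)\ (i \ne k)$, $w = g(T_k)$, $w' = g(T_k^*)$ to conclude that $\conv_0(T) \cup \conv_0(\mu_k(T))$ is convex and the two simplices share the common facet opposite $g(T_k)$ (resp.\ $g(T_k^*)$). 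Unimodularity from (2) forces the interiors to be disjoint. Since $A_G$ is representation-finite and symmetric, it is $\tau$-tilting finite and the mutation graph on $\ttilt A$ is connected (tilting-discrete case of Aihara--Iyama). Starting from $A$ itself, successive mutations glue one new unimodular simplex at a time along a facet of the previously constructed region; each gluing preserves convexity by Lemma \ref{lem:mutate-2-convex}, so in the limit $\calP(A)$ is convex, its interior simplices $\conv_0(T)$ have pairwise disjoint interiors, and together they form a unimodular triangulation. This gives (1) and (4).

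Finally for (3): since $\calP(A)$ is a convex lattice polytope of full dimension $n$ (it contains $\conv_0(A) = \conv(0,e_1,\dots,e_n)$ and, because $A$ is symmetric, also $\conv_0(A[1]) = \conv(0,-e_1,\dots,-e_n)$, so it is not contained in a hyperplane), it is homeomorphic to the closed $n$-ball. The main obstacle I anticipate is the bound $\#I \le 2$ in the mutation relation: one must verify carefully, using the 2-term shape and Lemma \ref{no common}, that no more than two distinct indecomposable summands of $T$ appear in $B$. Once this bound is in place, the rest of the argument reduces to bookkeeping with Lemma \ref{lem:mutate-2-convex} and the connectivity of the mutation graph.
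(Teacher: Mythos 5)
Your part (2) is exactly the paper's argument (the $g$-vectors of a 2-term tilting complex form a $\bbZ$-basis of $K_0(\Kb(\proj A))$ by \cite[Theorem 5.1]{AIR}), and that part is fine. The rest has two genuine gaps. First, the bound $\#I\le 2$ for the left approximation is not a "careful verification using the 2-term shape and Lemma \ref{no common}": it fails for general symmetric $\tau$-tilting finite algebras, and for Brauer tree algebras it is precisely the statement $(*)$ whose proof occupies most of Proposition \ref{union is convex}, relying on the classification of indecomposable 2-term pretilting complexes by alternating signed walks (Proposition \ref{alter walk}) and the compatibility Lemma \ref{lem:tau-rigid-connected} (or, alternatively, on a derived-equivalence reduction to the case $T=A$). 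You cannot assume it at the point where Proposition \ref{volume} is proved, and Proposition \ref{volume} itself must hold without it, since it is quoted in arguments (e.g.\ Proposition \ref{prp:2-conv-whole-conv}) that precede the proof of $(*)$.

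Second, even granting $\#I\le 2$, your gluing induction does not close. Lemma \ref{lem:mutate-2-convex} only shows that the union of \emph{two adjacent} simplices is convex; it says nothing about the union of a newly attached simplex with the entire previously constructed region, so "each gluing preserves convexity" is unsupported, and without global convexity you cannot rule out that $\conv_0(T_{m+1})$ overlaps the interior of some earlier $\conv_0(T_l)$ other than the one it was mutated from. Worse, the passage from pairwise-adjacent convexity to global convexity is Proposition \ref{prp:2-conv-whole-conv}, whose proof in the paper \emph{uses} Proposition \ref{volume}(3) to know that the planar section $\calS$ is a polygon; so deriving (3) from convexity, as you propose, is circular relative to the only available convexity argument. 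The paper avoids all of this by citing \cite[Theorem 1.9]{DIJ} for (1) (disjointness of interiors of $g$-vector cones of distinct 2-term silting complexes), and for (3) by citing \cite[Theorem 5.4]{DIJ}, which says that for a $\tau$-tilting finite algebra the simplicial complex $\Delta(A)$ is an $(n-1)$-sphere, identified with the boundary of $\calP(A)$ containing the origin as an interior point; (4) then follows from (1) and (2). To repair your proof you would either need to import those two results of \cite{DIJ}, or supply an independent shelling-type argument establishing simultaneously that each new simplex meets the previous union in a single facet and that the union stays convex -- neither of which follows from the lemmas you invoke.
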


\begin{proof}
(1) follows from \cite[Theorem 1.9]{DIJ}. (2) follows from the fact that the $g$-vector of $T$ gives a $\mathbb{Z}$-basis of $\mathbb{Z}^n\cong K_0(\Kb(\proj A))$ \cite[Theorem 5.1]{AIR}. 

Since $A$ is representation-finite, the number of $\ttilt A$ is finite. 
Hence it follows by \cite[Theorem 5.4]{DIJ} that $\Delta(A)$ 
is an $(n-1)$-dimensional sphere. On the other hand the geometric realization of $\Delta(A)$ coincides with the boundary of $\calP(A)$,which contains the origin as an inner point. Hence $\calP(A)$ is an $n$-dimensional ball. 
Moreover, (4) follows from (1),(2).
\end{proof}

Note that $\Delta(A)$ is nothing but the simplicial complex induced by the unimodular triangulation of $\ttilt A$.

Next we give a geometric interpretation of Definition \ref{def upper tilt}. 
For $i \in Q_0$, we set
$$
\begin{aligned}
H^{\le }_i&:=\{(v_j)_j^n \in \bbR^n \mid v_i \le 0\},\\
H^{\geq }_i&:=\{(v_j)_j^n \in \bbR^n \mid v_i \geq0\},\\
H^{0}_i&:=\{(v_j)_j^n \in \bbR^n \mid v_i = 0\}.
\end{aligned}$$

Then we have the following lemma. 

\begin{lem}\label{upper or lower}
\begin{enumerate}
\item For each $i\in Q_0$ and $1 \le j \le n$,
we have the following equalities.
$$
\begin{aligned}
\tpjtilt(A)_i^\leq &= \{ T\in \tpjtilt(A) \mid g(T)\in H^{\le }_i \}, \\
\tpjtilt(A)_i^\geq &= \{ T\in \tpjtilt(A) \mid g(T)\in H^{\ge }_i \}, \\
\tpjtilt(A)_i^0 &= \{ T\in \tpjtilt(A) \mid g(T)\in H^{0}_i \}.
\end{aligned}$$

\item For each $T \in \ttilt{A}$ and $i\in Q_0$, we have
$\conv_0(T) \subseteq H^{\ge}_i$ or $\conv_0(T) \subseteq H^{\le }_i$.

\item For any $i\in Q_0$, we have 
$$\calP(A)\cap H^{\le }_i=\bigcup_{T \in \tiny\ttilt A_i^\leq} \conv_0(T),\ \ \calP(A)\cap H^{\ge }_i=\bigcup_{T \in \tiny\ttilt A_i^\geq} \conv_0(T). $$
In particular, the unimodular triangulation of $P(A)$ induces the one on $P(A) \cap H_i^{\le}$ and on
$P(A) \cap H_i^{\ge}$ for each $i \in Q_0$.

\item Let $S$ (resp. $S'$) be the unimodular triangulation of $\calP(A)\cap H^{\le }_i$ (resp. $\calP(A)\cap H^{\ge }_i$). 
The number of $(j+1)$-dimensional faces of the simplicial complex 
$(\calP(A)\cap H^{\le }_i)_S$ (resp. $(\calP(A)\cap H^{\ge }_i)_{S'}$) coincides with $\# \tpjtilt(A_G)_i^\leq$ (resp. $\# \tpjtilt(A_G)_i^\geq$).
\end{enumerate}
\end{lem}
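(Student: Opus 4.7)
The four claims are tightly linked and all follow by tracing through the definition of the $g$-vector together with \lemref{no common}. The plan is to prove (1) by direct computation, bootstrap (2) from (1), derive (3) immediately from (2), and combine (3) with the extension property of 2-term pretilting complexes to obtain (4).

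For (1), decompose $T = (T^{-1} \to T^0) \in \tptilt(A)$ as $T^0 = \bigoplus_{j=1}^{n} (e_j A)^{m_j^0}$ and $T^{-1} = \bigoplus_{j=1}^{n} (e_j A)^{m_j^{-1}}$. Since $[T] = [T^0] - [T^{-1}]$ in $K_0(\Kb(\proj A))$, the $i$-th coordinate of $g(T)$ equals $m_i^0 - m_i^{-1}$. By \lemref{no common}, $\add(T^0) \cap \add(T^{-1}) = 0$, forcing $m_i^0 m_i^{-1} = 0$. Hence $g(T)_i \le 0$ if and only if $m_i^0 = 0$ if and only if $e_i A \notin \add T^0$, which is exactly the defining condition of $\tptilt(A)_i^{\le}$; the identifications for $\ge$ and $=0$ are identical.

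Part (2) follows by applying \lemref{no common} to the whole 2-term tilting complex $T = \bigoplus_{k=1}^{n} T_k$: at least one of $e_i A \notin \add T^0$ and $e_i A \notin \add T^{-1}$ holds, putting $T$ in $\tptilt(A)_i^{\le}$ or $\tptilt(A)_i^{\ge}$. The same sign condition then passes to every summand $T_k$ because $T_k^0 \lds T^0$ and $T_k^{-1} \lds T^{-1}$, so by (1) all the $g(T_k)$ lie in the same half-space, and together with the origin we conclude $\conv_0(T)$ does too. Part (3) now falls out: no maximal cell of the unimodular triangulation of $\calP(A)$ from \prpref{volume} straddles the hyperplane $H_i^0$, so the cells contained in $\calP(A)\cap H_i^{\le}$ form a unimodular triangulation of that half. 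These cells are precisely the $\conv_0(T)$ with all $g(T_k) \in H_i^{\le}$, which by (1) is the same as $T \in \ttilt(A)_i^{\le}$.

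Finally for (4), each $T' \in \tpjtilt(A)_i^{\le}$ extends to some $T \in \ttilt(A)_i^{\le}$ by the pretilting-extension property of \cite{AIR}, so its indecomposable summands appear among those of $T$ and their $g$-vectors together with the origin span a face of $\conv_0(T)$. This assignment is well-defined (independent of the chosen extension, because indecomposable 2-term pretilts are determined by their $g$-vectors) and yields a bijection between $\tpjtilt(A)_i^{\le}$ and the appropriate-dimensional faces of $(\calP(A)\cap H_i^{\le})_S$, with surjectivity clear from the fact that any subset of the vertices of a maximal cell corresponds to a pretilting direct summand. The main point requiring care is the dimension bookkeeping: a pretilt with $j$ summands produces a face with $j$ nonzero vertices, so matching this with the stated $(j+1)$-dimensional faces of the simplicial complex requires tracking whether the origin is included in the vertex set of $(\calP(A)\cap H_i^{\le})_S$, which is the point I expect to verify most carefully against the convention in \ref{dfn:triang}.
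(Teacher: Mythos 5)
Your proof follows the same route as the paper's, which disposes of this lemma in three lines citing exactly the ingredients you use: the definition of $g$-vectors together with Lemma \ref{no common} for (1) and (2), Proposition \ref{volume} for (3), and the induced triangulation for (4). Your treatments of (1)--(3) are correct and supply details the paper omits; in particular you are right that Lemma \ref{no common} is already needed for the converse implication in (1), since without $m_i^0m_i^{-1}=0$ the sign of the $i$-th coordinate of $g(T)$ would not determine whether $e_iA\in\add T^0$.

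One step in (4) does not work as written. You extend $T'\in\tpjtilt(A)_i^\leq$ to a $2$-term tilting complex ``by the pretilting-extension property of \cite{AIR}'' and take the completion to lie in $\ttilt(A)_i^\leq$. The Bongartz-type completion of \cite{AIR} only produces some $T\in\ttilt(A)$ and need not respect the half-space: the completion of $e_jA$ (for $j\ne i$) is $A$ itself, which is not in $\ttilt(A)_i^\leq$ even though $e_jA\in\tptilt(A)_i^\leq$. This surjectivity is exactly the nontrivial content of (4), and it should instead be deduced from (3): by (1) all $g$-vectors of the summands of $T'$ lie in $H^{\le}_i$, so the simplex $\conv(0,g(T'_1),\dots,g(T'_j))$ is a face of the global triangulation contained in $\calP(A)\cap H^{\le}_i=\bigcup_{T\in\tiny\ttilt A_i^\leq}\conv_0(T)$; since cells of a triangulation meet in common faces, this simplex is a face of $\conv_0(T)$ for some $T\in\ttilt(A)_i^\leq$, which is the required extension. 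With that repair your bijection (pretilting complexes with $j$ summands correspond to the simplices spanned by the origin and $j$ nonzero $g$-vectors, injectivity coming from the injectivity of $g$ on $\indtptilt(A)$) is correct. The dimension discrepancy you flag is real but lives in the statement, not in your argument: under the paper's own convention such a face is $j$-dimensional (or $(j-1)$-dimensional if the origin is dropped), and only the consistency of the count between the two half-spaces matters for the application in Theorem \ref{thm:main1}.
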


\begin{proof}(1) is immediate from the definition and (2)
follows by Lemma \ref{no common}. 

(3) follows from (1), (2) and Proposition \ref{volume}. 
Finally (4) follows from (3) and Theorem \ref{thm:BM}.
\end{proof}

Now we give some examples.

\begin{exm}\label{exam1}
(1) Let $G$ be a Brauer tree with two edges and $A_G$ the Brauer tree algebra of $G$.
Then $A_G$ is an algebra defined by the following quiver with relations:
$$
\xymatrix{
1 \ar@/^/[r]^{\al_1} & 2 \ar@/^/[l]^{\al_2}
}, \al_1\al_2\al_1 = 0, \al_2\al_1\al_2 = 0.
$$
Then the shaded region of the following picture denotes $\mathcal{P}(A_G)$, which can be decomposed into 6 unimodular simplices. 
Moreover 6 vertices stand for 0-dimensional faces, and 6 line segments on the boundary of $\calP(A_G)$
stand for 1-dimensional faces of $\Delta(A_G)$.
\[
\begin{tikzpicture}
\filldraw[draw=gray,thick,fill=black,opacity=.1] (-1.6,1.6)--(0,1.6)--(1.6,0)--(1.6,-1.6)--(0,-1.6)--(-1.6,0)--cycle;
\draw[draw=black,ultra thin] (-1.6,1.6)--(0,1.6)--(1.6,0)--(1.6,-1.6)--(0,-1.6)--(-1.6,0)--cycle;
\draw[draw=black,ultra thin,densely dashed] (-1.6,1.6)--(1.6,-1.6);
\draw (0,0) node[below left] {$O$};
\draw (-1.6,1.6) node {$\bullet$};
\draw (-1.6,1.6) node[above left] {$[P_2]-[P_1]$};
\draw (0,1.6) node {$\bullet$};
\draw (0,1.6) node[above right] {$[P_2]$};
\draw (1.6,0) node {$\bullet$};
\draw (1.6,0) node[above right] {$[P_1]$};
\draw (1.6,-1.6) node {$\bullet$};
\draw (1.6,-1.6) node[below right] {$[P_1]-[P_2]$};
\draw (0,-1.6) node {$\bullet$};
\draw (0,-1.6) node[below left] {$-[P_2]$};
\draw (-1.6,0) node {$\bullet$};
\draw (-1.6,0) node[below left] {$-[P_1]$};

\draw[draw=gray,ultra thin] (0,0)--(0,-1.6);
\draw[draw=gray,ultra thin] (0,0)--(1.6,0);
\draw[draw=gray,ultra thin] (0,0)--(-1.6,0);
\draw[draw=gray,ultra thin] (0,0)--(0,1.6);
\draw[draw=black,thin,->] (-2.5,0)--(2.5,0);
\draw[draw=black,thin,->] (0,-2.5)--(0,2.5);
\end{tikzpicture}\]

Thus we have the $f$-vector $(f_{-1},f_0,f_1)=(1,6,6)$ and the $h$-vector $(h_{0},h_1,h_2)=(1,4,1)$.

(2) Let $G$ be a linear tree with three edges and $A_G$ the Brauer tree algebra of $G$. 
Then $\calP(A_G)$ (and $\Delta(A_G)$) are described as follows. 
\if0
Then $0$-dimensional simplical complexes and $1$-dimensional simplical complexes with the origin can be described as follows. 
\[
\begin{tikzpicture}
\draw[draw=gray,ultra thin] (0,0)--(6,0)--(8,2)--(2,2)--cycle;
\draw[draw=gray,ultra thin] (4,1)--(3,4)--(4.8,3.8)--cycle;
\filldraw[draw=gray,ultra thin,fill=black,opacity=.3] (4,1)--(4.8,3.8)--(5.9,4.4)--cycle;
\draw[draw=gray,ultra thin] (3,4)--(4,4.7)--(4.8,3.8)--(5.9,4.4)--(4,4.7);
\draw[draw=gray,ultra thin,densely dashed] (4,1)--(4,4.7);
\draw[draw=black,ultra thick] (4,1)--(3,4);
\draw (2,3) node {$j=0$};
\draw[draw=black,->] (2.5,3)--(3.3,2.6);
\draw (6,3) node {$j=1$};
\draw[draw=black,->] (5.5,3)--(5,3.5);
\end{tikzpicture}
\]
\fi

\[
\begin{tikzpicture}
\draw[draw=black,->,thick] (118bp,26bp)--(-127bp,-27bp);
\draw[draw=black,->,thick] (-120bp,28bp)--(129bp,-30bp);
\draw[draw=gray,ultra thin,densely dashed] (131bp,0bp)--(68bp,15bp)--(-65bp,15bp)--(-131bp,1bp)--(-65bp,108bp)--(68bp,15bp)--(0bp,92bp);
\draw[draw=gray,ultra thin,densely dashed] (-65bp,108bp)--(-65bp,15bp);
\draw[draw=gray,ultra thin] (0bp,92bp)--(-65bp,108bp)--(-133bp,92bp)--(-131bp,1bp)--(-68bp,-15bp)--(65bp,-15bp)--(131bp,0bp)--(0bp,92bp)--(-133bp,92bp)--(-68bp,-15bp)--(0bp,92bp)--(65bp,-15bp);
\filldraw[draw=black,thick,fill=black,opacity=.1] (-131bp,1bp)--(-68bp,-15bp)--(65bp,-15bp)--(131bp,0bp)--(0bp,92bp)--(-65bp,108bp)--(-133bp,92bp)--cycle;
\filldraw[draw=black,thick,fill=black,opacity=.1] (-131bp,1bp)--(-68bp,-15bp)--(65bp,-15bp)--(131bp,0bp)--(133bp,-92bp)--(65bp,-107bp)--(0bp,-91bp)--cycle;
\draw[draw=gray,ultra thin] (-131bp,1bp)--(0bp,-91bp)--(-68bp,-15bp)--(65bp,-107bp)--(131bp,0bp)--(133bp,-92bp)--(65bp,-107bp)--(0bp,-91bp);
\draw[draw=gray,ultra thin] (-68bp,-15bp)--(65bp,-107bp)--(65bp,-15bp);
\draw[draw=gray,ultra thin,densely dashed] (-65bp,15bp)--(0bp,-91bp)--(68bp,15bp)--(133bp,-92bp)--(0bp,-91bp);
\draw (-68bp,-15bp) node (P1) {$\bullet$};
\draw (65bp,-15bp) node (P2) {$\bullet$};
\draw (0bp,92bp) node (P3) {$\bullet$};
\draw (68bp,15bp) node (Q1) {$\bullet$};
\draw (-65bp,15bp) node (Q2) {$\bullet$};
\draw (-131bp,1bp) node (S1) {$\bullet$};
\draw (131bp,0bp) node (T1) {$\bullet$};
\draw (-133bp,92bp) node (T2) {$\bullet$};
\draw (-65bp,108bp) node (S3) {$\bullet$};
\draw (0bp,-91bp) node (Q3) {$\bullet$};
\draw (65bp,-107bp) node (T3) {$\bullet$};
\draw (133bp,-92bp) node (S2) {$\bullet$};
\end{tikzpicture}
\]

We remark that the partial order of $\ttilt A_G$ is entirely determined by the cones of $g$-vectors \cite[Theorem 6.12, Corollary 6.13]{DIJ}. 
In this case, the Hasse quiver (= mutation quiver)  of $\ttilt A_G$ can be recovered as follows.
\[
\begin{tikzpicture}
\filldraw[draw=gray,thick,fill=black,opacity=.1] (-37bp,56bp)--(223bp,10bp)--(38bp,-57bp)--(-222bp,-11bp)--cycle;
\draw[draw=white,->,thick] (93bp,33bp) to (-92bp,-34bp);
\draw[draw=white,->,thick] (-130bp,23bp) to (130bp,-23bp);
\draw (11bp,28bp) node (G1) {$\bullet$};
\draw (101bp,41bp) node (G2) {$\bullet$};
\draw (-89bp,76bp) node (G3) {$\bullet$};
\draw (48bp,-66bp) node[gray] (G4) {$\bullet$};
\draw (89bp,56bp) node (G5) {$\bullet$};
\draw (138bp,-55bp) node[gray] (G6) {$\bullet$};
\draw (-101bp,137bp) node (G7) {$\bullet$};
\draw (-153bp,29bp) node (G8) {$\bullet$};
\draw (11bp,-104bp) node[gray] (G9) {$\bullet$};
\draw (153bp,-29bp) node[gray] (G10) {$\bullet$};
\draw (-11bp,104bp) node (G11) {$\bullet$};
\draw (164bp,-90bp) node[gray] (G12) {$\bullet$};
\draw (-165bp,90bp) node (G13) {$\bullet$};
\draw (-90bp,-56bp) node[gray] (G14) {$\bullet$};
\draw (101bp,-136bp) node[gray] (G15) {$\bullet$};
\draw (90bp,-76bp) node[gray] (G16) {$\bullet$};
\draw (-48bp,67bp) node (G17) {$\bullet$};
\draw (-138bp,55bp) node (G18) {$\bullet$};
\draw (-100bp,-39bp) node[gray] (G19) {$\bullet$};
\draw (-11bp,-28bp) node[gray] (G20) {$\bullet$};
\draw[draw=black,->,ultra thick] (G1) to (G2);
\draw[draw=black,->,ultra thick] (G1) to (G3);
\draw[draw=black,ultra thin] (G1) to (29bp,-19bp);
\draw[draw=black,->,ultra thin,densely dashed] (29bp,-19bp) to (G4);
\draw[draw=black,->,ultra thick] (G2) to (G5);
\draw[draw=black,ultra thin] (G2) to (119bp,-6bp);
\draw[draw=black,->,ultra thin,densely dashed] (119bp,-6bp) to (G6);
\draw[draw=black,->,ultra thick] (G3) to (G7);
\draw[draw=black,->,ultra thick] (G3) to (G8);
\draw[draw=black,->,ultra thin,densely dashed] (G4) to (G6);
\draw[draw=black,->,ultra thin,densely dashed] (G4) to (G9);
\draw[draw=black,ultra thin] (G5) to (121bp,14bp);
\draw[draw=black,->,ultra thin,densely dashed] (121bp,14bp) to (G10);
\draw[draw=black,->,ultra thick] (G5) to (G11);
\draw[draw=black,->,ultra thin,densely dashed] (G6) to (G12);
\draw[draw=black,->,ultra thick] (G7) to (G11);
\draw[draw=black,->,ultra thick] (G7) to (G13);
\draw[draw=black,->,ultra thick] (G8) to (G13);
\draw[draw=black,ultra thin] (G8) to (-122bp,-13bp);
\draw[draw=black,->,ultra thin,densely dashed] (-122bp,-13bp) to (G14);
\draw[draw=black,->,ultra thin,densely dashed] (G9) to (G14);
\draw[draw=black,->,ultra thin,densely dashed] (G9) to (G15);
\draw[draw=black,->,ultra thin,densely dashed] (G10) to (G16);
\draw[draw=black,->,ultra thick] (G11) to (G17);
\draw[draw=black,->,ultra thin,densely dashed] (G12) to (G10);
\draw[draw=black,->,ultra thin,densely dashed] (G12) to (G15);
\draw[draw=black,->,ultra thick] (G13) to (G18);
\draw[draw=black,->,ultra thin,densely dashed] (G14) to (G19);
\draw[draw=black,->,ultra thin,densely dashed] (G15) to (G16);
\draw[draw=black,->,ultra thin,densely dashed] (G16) to (G20);
\draw[draw=black,ultra thin] (G17) to (-29bp,19bp);
\draw[draw=black,->,ultra thin,densely dashed] (-29bp,19bp) to (G20);
\draw[draw=black,ultra thick] (G18) to (-116bp,58bp);
\draw[draw=black,ultra thick] (-109bp,59bp) to (-70bp,64bp);
\draw[draw=black,->,ultra thick] (-63bp,65bp) to (G17);
\draw[draw=black,ultra thin] (G18) to (-119bp,8bp);
\draw[draw=black,->,ultra thin,densely dashed] (-119bp,8bp) to (G19);
\draw[draw=black,->,ultra thin,densely dashed] (G19) to (G20);
\end{tikzpicture}
\]

\end{exm}

\subsection{Convexity}
In this subsection, we characterize when the polytope $\calP(A)$ is convex. 
Throughout this subsection, let $A$ be 
a finite dimensional symmetric algebra such that $\#\ttilt(A)$ is finite (this is equivalent to saying that $A$ is  \emph{$\tau$-tilting finite} \cite{DIJ}). 
{Assume that $|A|=n$, that is, $\Delta(A)$ is a simplicial complex 
of dimension $n-1$.
In this case, every $(n-2)$-simplex of $\Delta(A)$ is contained in exactly two maximal simplices \cite[Theorem 5.2]{DIJ}. 
Equivalently, there are exactly two 2-term tilting complexes $T,T'\in\ttilt(A)$ such that $T$ and $T'$ share $n-1$ indecomposable direct summands (this is also equivalent to saying that $T$ is obtained from $T'$ by mutation \cite[Corollary 3.8]{AIR}).
In this case, we call the two polytopes $\conv_0(T)$ and $\conv_0(T')$ \emph{adjacent}.
Note that every Brauer tree algebra is symmetric and $\tau$-tilting finite.}





First we prepare a basic property about the convexity of $\calP(A)$. 
The proof below was informed us by Osamu Iyama, for which we are thankful.




\begin{prp}\label{prp:2-conv-whole-conv}

If the union of any two adjacent polytopes $\conv_0(T)$ and $\conv_0(T')$ {with} $T,T'\in\ttilt A$ is convex, 
then the $g$-polytope $\calP(A)$ is convex. 
\end{prp}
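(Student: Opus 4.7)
The plan is to prove convexity of $\calP(A)$ by verifying local convexity at every point and invoking Tietze's theorem (a closed connected subset of $\bbR^n$ is convex if and only if it is locally convex).

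First I would note that $0$ lies in the interior of $\calP(A)$: since the cones $\{\sum_i \lambda_i g(T_i) : \lambda_i \geq 0\}$, for $T = \bigoplus_i T_i \in \ttilt A$, form a complete fan covering $\bbR^n$, a full neighborhood of $0$ is already covered by cells. Consequently every codim-$1$ side facet $\conv(0, g(T_j) : j \neq i)$ of a cell is shared with a unique adjacent cell, and $\partial \calP(A)$ is exactly the union of the outer facets $\conv(g(T_1),\dots,g(T_n))$ over $T \in \ttilt A$. Local convexity at interior points and at relative interior points of codim-$1$ boundary facets is then immediate (the local picture is an open ball and a closed half-ball, respectively).

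The crux is the codim-$2$ case. Let $G$ be an $(n-2)$-face of $\bbR^n$ lying on $\partial \calP(A)$. Since $\partial \calP(A)$ is a piecewise-linear $(n-1)$-sphere by Proposition \ref{volume}(3), $G$ is contained in exactly two outer facets, hence in exactly two cells $\conv_0(T)$ and $\conv_0(T')$. As each cell has $0$ as a vertex in addition to the $n-1$ $g$-vectors spanning $G$, both cells contain the $(n-1)$-simplex $\conv(0, G)$ as a side facet, and so they are mutation-adjacent in the sense of the hypothesis. Applying the hypothesis, $\conv_0(T) \cup \conv_0(T')$ is convex; since this union coincides with $\calP(A)$ in a neighborhood of any relative-interior point of $G$, the dihedral angle of $\partial \calP(A)$ along $G$ is at most $\pi$, yielding local convexity at every such point.

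For boundary points lying in faces of codimension $k \geq 3$, I would argue by induction on codimension using the normal cross-section at the point: the slice inherits the structure of a union of convex cones sharing a common apex, whose pairwise adjacent unions are convex (by restriction of the hypothesis) and which is a topological $k$-ball by the manifold-with-boundary structure of $\calP(A)$, so a recursive application of the same reasoning in lower ambient dimension delivers convexity of the slice, and hence local convexity at the original point. Tietze's theorem then concludes the proof. The main obstacle is the combinatorial identification in the codim-$2$ case that the two cells meeting at a boundary face are a mutation-adjacent pair; this is the essential bridge translating the convex-union hypothesis into the dihedral-angle criterion for local convexity.
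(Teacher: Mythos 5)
Your overall strategy (local convexity at every point plus the Tietze--Nakajima theorem) is a legitimate alternative to the paper's argument, and your treatment of interior points, of relative interiors of facets, and of codimension-$2$ boundary faces is correct: the two cells meeting along an $(n-2)$-simplex of $\partial\calP(A)$ are indeed an adjacent pair in the sense of the hypothesis, which gives the dihedral-angle bound there. The paper proceeds differently: it slices $\calP(A)$ by a generic $2$-plane $H$ through the origin, observes that the slice is a polygon triangulated by triangles sharing the interior vertex $0$, that consecutive triangles come from adjacent cells, and that the hypothesis forces every interior angle of the polygon to be at most $\pi$. The point of that reduction is that a generic $2$-plane through $0$ meets only the $n$- and $(n-1)$-dimensional cones of the fan, so faces of codimension $\ge 2$ never enter the argument at all.

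That is exactly where your proof has a genuine gap. For a boundary point $x$ in the relative interior of a face of codimension $k\ge 3$, you propose to prove convexity of the normal slice $N$ at $x$ by ``a recursive application of the same reasoning in lower ambient dimension.'' But $N$ is a union of convex cones whose common apex $x$ lies on the \emph{boundary} of $N$ (a cone over a $(k-1)$-ball that does not wrap around its apex), and for such partial fans the hypothesis that any two adjacent cones have convex union does \emph{not} imply convexity: take three planar sectors of angle $\pi/2$ glued consecutively; each adjacent pair is a half-plane, hence convex, yet the union is a sector of angle $3\pi/2$. Moreover, your recursion never discharges local convexity at the apex of $N$ itself --- that is precisely the statement to be proved at that point, so invoking Tietze on $N$ is circular. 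The missing ingredient is global: convexity of $\calP(A)$ near deep faces ultimately relies on the cells wrapping all the way around the interior point $0$, information that a purely local analysis at $x$ discards and that the paper's slice through the origin retains. To repair your argument you would need an additional result (e.g.\ a polyhedral van Heijenoort-type theorem, or the paper's $2$-dimensional slicing applied to the links), so as written the codimension $\ge 3$ case is not established.
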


\begin{proof}
We let $\calP:=\calP(A)$ for simplicity.
Let $H$ be a 2-dimensional plane containing the origin and
consider the intersection $\calS:= \mathcal{P} \cap H$. 
Since the convexity is a condition of a line segment connecting two points, 
$\calP$ is convex if and only if $\calS$ is convex for any $H$. 
Without loss of generality, we may assume that $\calS$ does not contain any non-zero vertex of $\calP$ because we can move $H$ slightly in order not to contain any vertex of $\calP$ keeping the property that $\calS$ is not convex. 
Hence, for each convex set $\conv_0(T)$, the intersection of $\conv_0(T)$ and $H$ is a triangle containing the origin as its vertex. Thus, by Proposition \ref{volume} (3),  $\calS$ must be a polygon. 
Then the assumption implies that the union of any two adjacent triangles of $\calS$ is convex, which shows that each interior angle of the polygon $\calS$ is at most $180^\circ$. Hence $\calS$ is convex.
\end{proof}


\begin{exm}
The following picture shows the situation in Proposition \ref{prp:2-conv-whole-conv} in the case of the Brauer tree algebra $A_G$ with a linear tree $G$ having 3 edges. 

\[
\begin{tikzpicture}
\draw[draw=gray,ultra thin,densely dashed] (-185bp,110bp)--(-225bp,-19bp)--(-185bp,14bp)--(-185bp,110bp)--(-51bp,32bp)--(-185bp,14bp)--(-90bp,-97bp)--(-51bp,32bp)--(-90bp,97bp);
\draw[draw=gray,ultra thin] (-90bp,97bp)--(-185bp,110bp)--(-225bp,78bp)--(-225bp,-19bp)--(-129bp,-32bp)--(-225bp,78bp)--(-90bp,97bp)--(-129bp,-32bp)--(6bp,-14bp)--(-90bp,97bp)--(45bp,19bp)--(6bp,-14bp);
\draw[draw=gray,ultra thin] (-225bp,-19bp)--(-90bp,-97bp)--(-129bp,-32bp)--(6bp,-110bp)--(45bp,19bp)--(45bp,-78bp)--(6bp,-110bp)--(-90bp,-97bp);
\draw[draw=gray,ultra thin] (6bp,-14bp)--(6bp,-110bp);
\draw[draw=gray,ultra thin,densely dashed] (45bp,19bp)--(-51bp,32bp)--(45bp,-78bp)--(-90bp,-97bp);
\draw[draw=black,ultra thin] (-81bp,92bp)--(-66bp,109bp)--(34bp,61bp)--(-117bp,-113bp)--(-217bp,-65bp)--(-193bp,-37bp);
\draw[draw=black,ultra thin,densely dashed] (-193bp,-37bp)--(-81bp,92bp);
\filldraw[draw=black,thick,fill=black,opacity=.1] (-22bp,58bp)--(-26bp,23bp)--(-62bp,-23bp)--(-95bp,-52bp)--(-157bp,-58bp)--(-153bp,-23bp)--(-118bp,23bp)--(-84bp,52bp)--cycle;
\draw[draw=black,ultra thin] (-22bp,58bp)--(-26bp,23bp)--(-62bp,-23bp)--(-95bp,-52bp)--(-157bp,-58bp);
\draw[draw=black,ultra thin,densely dashed] (-157bp,-58bp)--(-153bp,-23bp)--(-118bp,23bp)--(-84bp,52bp)--(-22bp,58bp);
\draw (-129bp,-32bp) node (P1) {$\bullet$};
\draw (6bp,-14bp) node (P2) {$\bullet$};
\draw (-90bp,97bp) node (P3) {$\bullet$};
\draw (-51bp,32bp) node (Q1) {$\bullet$};
\draw (-185bp,14bp) node (Q2) {$\bullet$};
\draw (-225bp,-19bp) node (S1) {$\bullet$};
\draw (45bp,19bp) node (T1) {$\bullet$};
\draw (-225bp,78bp) node (T2) {$\bullet$};
\draw (-185bp,110bp) node (S3) {$\bullet$};
\draw (-90bp,-97bp) node (Q3) {$\bullet$};
\draw (6bp,-110bp) node (T3) {$\bullet$};
\draw (45bp,-78bp) node (S2) {$\bullet$};
\draw (-22bp,58bp) node (R1) {$\circ$};
\draw (-26bp,23bp) node (R2) {$\circ$};
\draw (-62bp,-23.5bp) node (R3) {$\circ$};
\draw (-95bp,-52bp) node (R4) {$\circ$};
\draw (-157bp,-58.5bp) node (R5) {$\circ$};
\draw (-153bp,-23bp) node (R6) {$\circ$};
\draw (-118bp,23bp) node (R7) {$\circ$};
\draw (-85bp,51.5bp) node (R8) {$\circ$};
\draw (-115.5bp,-54.5bp) node (U) {$\circ$};
\draw (-65bp,54bp) node (V) {$\circ$};

\draw (12bp,57bp) node (H) {{\LARGE $H$}};
\draw (-90bp,0bp) node (S) {{\LARGE $\calS$}};
\draw (135bp,-100bp) node (T) {{\large polygon $\calS$}};

\draw[draw=black,ultra thin] (135bp,80bp)--(181.6bp,53.3bp)--(205bp,0bp)--(205bp,-40bp)--(135bp,-80bp)--(88.4bp,-53.3bp)--(65bp,0bp)--(65bp,40bp)--cycle;
\draw[draw=black,ultra thin,densely dashed] (135bp,0bp)--(135bp,80bp);
\draw[draw=black,ultra thin,densely dashed] (135bp,0bp)--(181.6bp,53.3bp);
\draw[draw=black,ultra thin,densely dashed] (135bp,0bp)--(205bp,0bp);
\draw[draw=black,ultra thin,densely dashed] (135bp,0bp)--(205bp,-40bp);
\draw[draw=black,ultra thin,densely dashed] (135bp,0bp)--(135bp,-80bp);
\draw[draw=black,ultra thin,densely dashed] (135bp,0bp)--(88.4bp,-53.3bp);
\draw[draw=black,ultra thin,densely dashed] (135bp,0bp)--(65bp,0bp);
\draw[draw=black,ultra thin,densely dashed] (135bp,0bp)--(65bp,40bp);
\draw[draw=black,ultra thin,densely dashed] (135bp,0bp)--(181.6bp,-53.3bp);
\draw[draw=black,ultra thin,densely dashed] (135bp,0bp)--(88.4bp,53.3bp);
\filldraw[draw=black,thick,fill=black,opacity=.1] (135bp,80bp)--(181.6bp,53.3bp)--(205bp,0bp)--(205bp,-40bp)--(135bp,-80bp)--(88.4bp,-53.3bp)--(65bp,0bp)--(65bp,40bp)--cycle;

\draw (135bp,80bp) node (R1) {$\circ$};
\draw (181.6bp,53.3bp) node (R2) {$\circ$};
\draw (205bp,0bp) node (R3) {$\circ$};
\draw (205bp,-40bp) node (R4) {$\circ$};
\draw (135bp,-80bp) node (R5) {$\circ$};
\draw (88.4bp,-53.3bp) node (R6) {$\circ$};
\draw (65bp,0bp) node (R7) {$\circ$};
\draw (65bp,40bp) node (R8) {$\circ$};
\draw (181.6bp,-53.3bp) node (U) {$\circ$};
\draw (88.4bp,53.3bp) node (V) {$\circ$};

\end{tikzpicture}
\]

\end{exm}





Next we introduce the following terminology (we refer to \cite{AI} for the notion of mutation). 

\begin{dfn}\label{dfn:left-mutation}
Let $T\in\ttilt A$ and $T= \Ds_{i=1}^n T_i$ be a decomposition into indecomposable complexes $T_i$
($1 \le i \le n$). 
\begin{enumerate}
\item For the left mutation sequence  
$$\xymatrix{
T_i \ar[r]^(0.4){} & \Ds_{k \in I} T_k \ar[r] & T_i' \ar[r]&T_i[1]
}$$
 such that $T_i'\in\tptilt A$ (or equivalently,  $T_i'\oplus \Ds_{j\neq i} T_j\in\ttilt A$), if $\#I = |\Ds_{k\in I} T_k |\leq 2$, then we say that \emph{$T_i$ admits at most two indecomposable left approximation.
} 
\item We say that \emph{$T$ admits at most two indecomposable left  approximation} if so does any indecomposable direct summand of $T$. 
\item We say that \emph{$\ttilt A$ admits at most two indecomposable left  approximation} if so does any $T\in\ttilt A$. 
\end{enumerate}
\end{dfn}

We remark that if $\ttilt A$ admits at most two indecomposable left approximation, then $\ttilt A$ also admits at most two indecomposable right approximation because left and right mutation are invertible to each other.

The following proposition gives a sufficient condition for the polytope 
$\calP(A)$ to be convex, which was inspired by a similar argument used in \cite{H2}. 

\begin{prp}\label{union will be convex}
If $\ttilt A$ admits at most two indecomposable left approximation, then 
$\calP(A)$ is convex. 
\end{prp}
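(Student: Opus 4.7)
The plan is to combine Proposition~\ref{prp:2-conv-whole-conv} with Lemma~\ref{lem:mutate-2-convex}. By Proposition~\ref{prp:2-conv-whole-conv}, it suffices to show that $\conv_0(T) \cup \conv_0(T')$ is convex for every pair of adjacent 2-term tilting complexes $T, T' \in \ttilt A$. So the whole task reduces to analyzing a single left mutation.

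First I would fix such an adjacent pair. By the characterization of adjacency, $T$ and $T'$ share $n-1$ indecomposable direct summands, so we may write $T = T_i \oplus \bigoplus_{j \ne i} T_j$ and $T' = T_i' \oplus \bigoplus_{j \ne i} T_j$, where $T_i'$ is obtained from $T_i$ by left mutation via a triangle
\[
T_i \ya{} \bigoplus_{k \in I} T_k \ya{} T_i' \ya{} T_i[1]
\]
with $I \subseteq \{1,\dots,n\} \setminus \{i\}$. Taking classes in $K_0(\Kb(\proj A)) \iso \bbZ^n$, the triangle gives $[T_i] + [T_i'] = \sum_{k \in I}[T_k]$, hence the $g$-vector identity
\[
g(T_i) + g(T_i') = \sum_{k \in I} g(T_k).
\]

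Next I would invoke the hypothesis: since $\ttilt A$ admits at most two indecomposable left approximation, $\#I \le 2$. I also need that $I$ is non-empty; this follows because otherwise the triangle would force $T_i' \iso T_i[1]$, which is incompatible with both complexes being 2-term and lying in $\tptilt A$ (see Lemma~\ref{no common}). With $v_j := g(T_j)$ for $j \ne i$ relabeled as $v_1,\dots,v_{n-1}$, and $w := g(T_i)$, $w' := g(T_i')$, the identity above becomes exactly the hypothesis \eqref{eq:mutate-2} of Lemma~\ref{lem:mutate-2-convex}. Applying that lemma, together with the definition \eqref{eq:dfn-conv-hull} of $\conv_0$, yields
\[
\conv_0(T) \cup \conv_0(T') = \conv_0(v_1,\dots,v_{n-1},w,w'),
\]
which is convex. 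Proposition~\ref{prp:2-conv-whole-conv} then gives convexity of $\calP(A)$.

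The only subtle point is the non-emptiness of $I$, i.e.\ that the left approximation is nonzero. This should follow from the fact that for 2-term tilting complexes over a $\tau$-tilting finite symmetric algebra the mutation is always nontrivial in the sense $T \ne T'$, together with Lemma~\ref{no common} excluding $T_i' \iso T_i[1]$. Apart from this bookkeeping, the proof is a direct combination of the two earlier tools and contains no further calculation.
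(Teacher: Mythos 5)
Your proposal is correct and follows essentially the same route as the paper: reduce to adjacent pairs via Proposition~\ref{prp:2-conv-whole-conv}, extract the $g$-vector identity $g(T_i)+g(T_i')=\sum_{k\in I}g(T_k)$ from the left mutation triangle, and apply Lemma~\ref{lem:mutate-2-convex} using the hypothesis $\#I\le 2$. Your extra care about the non-emptiness of $I$ and the identification of adjacency with single mutation is sound bookkeeping that the paper handles implicitly.
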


\begin{proof}
Let $T\in\ttilt A$ and $T= \Ds_{i=1}^n T_i$ be a decomposition into indecomposable complexes $T_i$. 
Take the left mutation sequence  
$$\xymatrix{
T_i \ar[r]^(0.4){} & \Ds_{} T_k \ar[r] & T_i' \ar[r]&T_i[1]
}$$ such that $T_i'\in\tptilt A$.    
Then, by our assumption, we have $g(T_i)+g(T_i')=\sum_{k\in I}g(T_k)$ with $\# I\leq 2$.
Therefore, 
we apply Lemma \ref{lem:mutate-2-convex} by taking 
$w=g(T_i),w'=g(T_i')$ and
hence $\conv_0(T)\cup\conv_0(T')$ is convex, where $T':= T_i'\oplus \Ds_{j\neq i} T_j\in\ttilt A$. 
Moreover, since $\#\ttilt(A)$ is finite, $\ttilt A$ are transitive by the action of mutation \cite[Corollary 3.10]{AIR}. 
Thus, any two adjacent convex hulls always becomes convex and hence Proposition \ref{prp:2-conv-whole-conv} implies that $\calP(A)$ is convex.
\end{proof}






\section{Symmetry of the polytopes}
In this section, we study geometric properties of $\calP(A)$ for a Brauer tree algebra $A$. We show the convexity and symmetry of $\calP(A)$.
This fact gives the correspondence of the $f$-vector of the upper half part and lower half part of $\Delta(A)$ divided by $H_i^0$ for any $i\in Q_0$.

The aim of this section is to show the following result.

\begin{thm} \label{thm:main1}
Let $G$ be a Brauer tree with multiplicity 1 and $A_G$ the Brauer tree algebra of $G$. 
Then, for any $i\in Q_{G,0}$ and $1 \le j \le |A_G|$, we have 
$$\# \tpjtilt(A_G)_i^\leq=\# \tpjtilt(A_G)_i^\geq.$$
\end{thm}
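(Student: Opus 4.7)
My plan is to deduce the equality of counts from a central symmetry of the $g$-polytope, namely $\calP(A_G) = -\calP(A_G)$, together with the fact that the linear involution $v \mapsto -v$ preserves the unimodular triangulation of $\calP(A_G)$ cut out by $\ttilt(A_G)$. Granting this, $v \mapsto -v$ swaps the half-spaces $H_i^{\le}$ and $H_i^{\ge}$, so it restricts to a face-preserving bijection between $\calP(A_G) \cap H_i^{\le}$ and $\calP(A_G) \cap H_i^{\ge}$. Lemma~\ref{upper or lower}(3)(4) then translates this bijection into the desired equality $\#\tpjtilt(A_G)_i^{\le} = \#\tpjtilt(A_G)_i^{\ge}$, uniformly in $i\in Q_{G,0}$ and $1 \le j \le |A_G|$.

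\textbf{Steps.} First I would show that $\calP(A_G)$ is convex by verifying the hypothesis of Proposition~\ref{union will be convex}: in the Brauer quiver $Q_G$ each vertex $i$ lies on exactly two oriented cycles (one $\alpha$-cycle and one $\beta$-cycle), and using this together with the explicit Kauer-move description of mutations for Brauer tree algebras, each left mutation triangle $T_i \to \bigoplus_{k \in I} T_k \to T_i' \to T_i[1]$ of a 2-term tilting complex has $\#I \le 2$. Next I would establish the antipodal symmetry. The base case is $T_0 := A_G \in \ttilt(A_G)$ paired with its shift $T_0[1] = A_G[1] \in \ttilt(A_G)$, whose $g$-vectors are $(1,\dots,1)$ and $-(1,\dots,1)$, so that $\conv_0(T_0[1]) = -\conv_0(T_0)$. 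I would then propagate this to all of $\ttilt(A_G)$ via the mutation graph, which is finite and connected since $A_G$ is $\tau$-tilting finite: arguing by induction on the mutation distance from $A_G$, for each $T \in \ttilt(A_G)$ I would produce an antipode $T^{-}\in \ttilt(A_G)$ with $g(T^{-}) = -g(T)$ (as multisets of column vectors), using that a left mutation $T \leftrightarrow T'$ at an indecomposable summand must correspond, after negating $g$-vectors, to a right mutation $T^{-}\leftrightarrow (T')^{-}$. Concatenating these pairings across the whole mutation graph yields the required symmetry $\calP(A_G) = -\calP(A_G)$.

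\textbf{Main obstacle.} The principal difficulty is to verify that the informal involution $T \mapsto T^{-}$ is well-defined and genuinely commutes with mutation. Concretely, one must check that whenever $T,T' \in \ttilt(A_G)$ are related by a left mutation at an indecomposable summand, the hypothetical antipodes $T^{-},(T')^{-}$ are related by a right mutation and that the antipodal $g$-vectors agree. This is where the specific combinatorics of the Brauer tree genuinely enters: Kauer moves and the walk description of indecomposable direct summands must be shown to be invariant under the orientation-reversing involution on 2-term silting complexes that implements $g \mapsto -g$. As a possible alternative route, one can take the convexity of $\calP(A_G)$ established in Step 1, combine it with Lemma~\ref{lem:mutate-2-convex} applied across adjacent maximal simplices, and argue inductively that the antipode of every simplex $\conv_0(T)$ is already tiled by elements of $\ttilt(A_G)$; either way, the heart of the argument is the compatibility between the $T \mapsto T^{-}$ correspondence and the mutation structure on $\ttilt(A_G)$.
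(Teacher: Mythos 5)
There is a genuine gap at the pivotal step of your argument. Everything hinges on the claim that the antipodal map $v\mapsto -v$ preserves not just the polytope $\calP(A_G)$ but its unimodular \emph{triangulation} --- equivalently, that there is an involution $T\mapsto T^{-}$ on $\ttilt(A_G)$ with $g(T^{-})=-g(T)$. You correctly identify this as the main obstacle, but your proposed induction along the mutation graph does not resolve it: given that $T^{-}$ exists for some $T$ and that $T'$ is obtained from $T$ by a left mutation, there is no a priori reason why the negative of the $g$-vector of the new summand $T_i'$ is again realized by an indecomposable pretilting complex, let alone one compatible with the remaining $n-1$ summands of $T^{-}$. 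The assertion that ``a left mutation must correspond, after negating $g$-vectors, to a right mutation of the antipodes'' presupposes that the antipode is already defined on both sides of the mutation, which is circular. (Note that Lemma~\ref{lem:tau-rigid-connected} only gives \emph{necessary} conditions for compatibility of signed walks, so one cannot simply flip all signs and conclude that compatibility is preserved.)

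The paper sidesteps this entirely, and that is the idea your proposal is missing. It proves only the much weaker statements that (i) the set of $g$-vectors of \emph{indecomposable} 2-term pretilting complexes is antipodally symmetric (Lemma~\ref{divide half}, by replacing an alternating signed walk $(w,s)$ with $(w,-s)$), and (ii) $\calP(A_G)$ is convex (Proposition~\ref{union is convex}), whence each half $\calP(A_G)\cap H_i^{\le}$ and $\calP(A_G)\cap H_i^{\ge}$ equals the convex hull of the relevant indecomposable $g$-vectors and the two halves are related by $v\mapsto -v$ \emph{as polytopes} (Corollary~\ref{same convex}), with no claim whatsoever about their triangulations corresponding. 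The count then follows from Theorem~\ref{thm:BM} (Betke--McMullen): the $f$-vector of a unimodular triangulation of a lattice polytope is independent of the choice of triangulation, so lattice-equivalent polytopes with unimodular triangulations automatically have equal $f$-vectors, and Lemma~\ref{upper or lower}(4) converts these into $\#\tpjtilt(A_G)_i^{\le}$ and $\#\tpjtilt(A_G)_i^{\ge}$. A secondary issue: your verification of the hypothesis of Proposition~\ref{union will be convex} (``each vertex lies on two cycles, plus the Kauer-move description'') only handles the mutations of $A_G$ itself; for an arbitrary $T\in\ttilt(A_G)$ one needs either the paper's lengthy case analysis of alternating signed walks in the proof of Proposition~\ref{union is convex}, or the derived-equivalence reduction to the case $T=A_G$ given in the remark that follows it.
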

We keep Notation \ref{notation}, and 
in the sequel, we set $A:= A_G$ and $n:= |G| = |A|$.

First, following \cite{AAC}, we use the following definition.
\newcommand{\sign}{\operatorname{sign}}

\begin{dfn}
An {\em alternating signed walk} of $G$ is a pair $(w, s)$
of a walk $w=a_{1}a_{2}\cdots a_{\ell}$ of $G$ and a map
$s\colon \{a_{1}, a_{2}, \cdots, a_{\ell}\} \to \{1, -1\}$
such that $s(a_{k})=-s(a_{{k+1}})$ for all $1\leq k\leq \ell-1$.
When there seems to be no confusion, we just denote it by $w$
and denote $s$ by $\sign$.
\end{dfn}

\begin{exm}\label{exam signed walk}
The line along the tree with $+, -$ signs shown below represents an alternating signed walk $(w,s)$ in $G$.
$$
\xymatrix@M=0pt{
&&&&&&&\bullet\\
&&&&&&\bullet\\
\bullet&&&&&\bullet\\
&\bullet&\bullet&\bullet&\bullet\\
\bullet&&&&&\bullet\\
&&&&&&\bullet
\ar@{-}"3,1";"4,2"
\ar@{-}"4,2";"4,3"
\ar@{-}"4,3";"4,4"
\ar@{-}"4,4";"4,5"
\ar@{-}"4,5";"3,6"
\ar@{-}"3,6";"2,7"
\ar@{-}"2,7";"1,8"
\ar@{-}"5,1";"4,2"
\ar@{-}"4,5";"5,6"
\ar@{-}"5,6";"6,7"
\ar@{-}"4,2"+<0pt,4pt>;"4,3"+<0pt,4pt>^+
\ar@{-}"4,3"+<0pt,4pt>;"4,4"+<0pt,4pt>^{-}
\ar@{-}"4,4"+<0pt,4pt>;"4,5"+<-2pt,4pt>^+
\ar@{-}"4,5"+<-2pt,4pt>;"3,6"+<-6pt,0pt>^{-}
\ar@{-}"3,6"+<-6pt,0pt>;"2,7"+<-6pt,0pt>^+
}
$$

\end{exm}



The following lemma is immediate from the definition of Brauer tree algebras. 


\begin{lem}\label{hom-dim}
Let $a,b$ be distinct edges of $G$.
Then we have 
$$
\Hom_A(P_a, P_b) =
\begin{cases}
\k \la_{p(a,b)} & \text{if $a$ and $b$ are connected to a common vertex in $G$,}\\
0 & \text{otherwise,}
\end{cases}
$$
where $p(a,b)$ is the shortest path from $a$ to $b$ in the quiver $Q_G$ and $\la_p$ denotes the left multiplication by the image of $p$ in $A$ for each path $p$ in $Q_G$.
%
\end{lem}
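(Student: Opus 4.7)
The plan is to identify $\Hom_A(P_a, P_b) \cong e_b A e_a$ via $f \mapsto f(e_a)$, and then to realise $e_b A e_a$ as the $\k$-span of the paths in $Q_G$ from $a$ to $b$ that survive modulo the ideal $I_G$. Recall that $I_G$ is generated by the zero relations $\al_i \be^i = \be_i \al^i = 0$ at each vertex $i$ of $Q_G$, together with the commutativity relations $C_{\al}(i) = C_{\be}(i)$ (the exceptional relation $C_{\al}(S)^m = C_{\be}(S)$ reduces to this under the blanket assumption $m = 1$).

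The first step is to observe that any non-zero path in $\k Q_G / I_G$ is \emph{monochromatic}: all of its arrows are $\al$-type or all are $\be$-type. This is immediate from the zero relations, since at any internal vertex $i$ of a path the incoming arrow is $\al^i$ or $\be^i$ and the outgoing arrow is $\al_i$ or $\be_i$, and the only combinations not killed by $\al_i\be^i$ or $\be_i\al^i$ are the monochromatic ones. Now the distinct $\al$-cycles of $Q_G$ are vertex-disjoint, since they correspond bijectively to the $\al$-coloured vertices of $G$, and the vertices of the $\al$-cycle at such an $x$ are precisely the edges of $G$ in $U_x$. Hence a monochromatic path through $a$ stays inside the $\al$-cycle at the $\al$-coloured endpoint $x$ of $a$ (and symmetrically for $\be$). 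In particular, if no vertex of $G$ is common to $a$ and $b$, then no path from $a$ to $b$ can be monochromatic, so $e_b A e_a = 0$; this settles the second case of the lemma.

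The second step handles the case in which $a$ and $b$ share a vertex $x$ of $G$, which is necessarily unique since $G$ is a tree. Say $x$ is $\al$-coloured, so that $b = \si_x^m(a)$ for a unique $1 \le m < n_x$, and $p(a,b)$ is the product of the first $m$ arrows of $C_{\al}(a)$. By the monochromaticity argument, every path from $a$ to $b$ in $Q_G$ has the form $p(a,b) \cdot C_{\al}(a)^k$ for some $k \ge 0$. For $k \ge 1$ I would use the commutativity relation to rewrite
\[
p(a,b) \cdot C_{\al}(a) \;=\; p(a,b) \cdot C_{\be}(a),
\]
and then observe that the concatenation on the right contains the subpath $\al_a \be^a$ (the last arrow of $C_{\be}(a)$ is $\be^a$ and the first arrow of $p(a,b)$ is $\al_a$), which is one of the generators of $I_G$. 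Hence $p(a,b) \cdot C_{\al}(a)^k = 0$ for every $k \ge 1$. The representative $p(a,b)$ itself is an initial subword of $C_{\al}(a)$ of length $m < n_x$, which is part of a standard $\k$-basis of $P_a = e_a A$ and hence non-zero. The case where $x$ is $\be$-coloured is entirely symmetric, and combining both cases gives $\Hom_A(P_a, P_b) = \k\, \la_{p(a,b)}$, as claimed.

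The only delicate point is the second step: verifying that after applying one commutativity relation the resulting concatenation really does contain one of the prescribed zero-relation generators at the junction vertex $a$. This is a local check and it goes through uniformly for all $a$ because, under the assumption $m = 1$, the same commutativity relation $C_{\al}(a) = C_{\be}(a)$ is available at every vertex, including the exceptional one.
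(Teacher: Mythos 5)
Your proof is correct: the paper offers no argument for this lemma (it is declared ``immediate from the definition of Brauer tree algebras''), and your write-up is exactly the computation the authors intend --- identify $\Hom_A(P_a,P_b)$ with $e_bAe_a$, use the zero relations $\al_i\be^i=\be_i\al^i=0$ to reduce to monochromatic paths, observe that such a path is confined to the $\al$- or $\be$-cycle through $a$ (these cycles being vertex-disjoint for each colour because the tree is properly 2-coloured), and apply one commutativity relation to kill every path strictly longer than $p(a,b)$. The single ingredient you quote rather than verify is the non-vanishing of $p(a,b)$ in $A$ (i.e.\ the standard $\k$-basis of $P_a$ by proper initial segments of the two cycles), which is a well-known fact about Brauer tree algebras and is consistent with the level of detail the paper itself assumes at this point.
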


From Lemma \ref{hom-dim}, we give the following description by \cite[Theorem 4.6]{AAC}.

\begin{prp}\label{alter walk}
We have a bijection
$$\{\textnormal{Alternating signed walks of $G$}\}\longrightarrow\ \indtptilt A.$$
The map is given as follows. 
For each alternating signed walk $(w, s)$ with $w=a_1a_2\dots a_\ell$ 
define 
$$
P(w,s):= (\cdots \to 0\to \Ds_{s(a_i)=-1}P_{a_i} \ya{\ g\ } \Ds_{s(a_j)=1}P_{a_j} \to 0 \to \cdots),
$$
where $g$ is expressed by the matrix with the $(j,i)$-entries $\la_{p(a_i,a_j)}$ given in {\rm Lemma \ref{hom-dim}}.
\end{prp}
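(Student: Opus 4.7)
The plan is to invoke the classification result \cite[Theorem 4.6]{AAC} of Adachi-Aihara-Chan, which describes indecomposable 2-term presilting complexes over Brauer graph algebras via signed walks. Since every Brauer tree is a special Brauer graph, the proposition should essentially follow once conventions are matched; the substantive task is to verify that the alternating sign condition used here coincides with the signed-walk condition there.

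First I would check that for any alternating signed walk $(w,s)$, the complex $P(w,s)$ belongs to $\indtptilt A$. The differential $g$ is well-defined because each entry $\lambda_{p(a_i,a_j)}$ lies in $\Hom_A(P_{a_i}, P_{a_j})$, which is nonzero exactly when the edges $a_i$ and $a_j$ share a vertex of $G$, by Lemma \ref{hom-dim}. Pretiltingness, i.e.\ $\Hom_{\Kb(\proj A)}(P(w,s), P(w,s)[1]) = 0$, then reduces via the same lemma to a combinatorial check: the potential obstructions come from pairs of edges $a_i, a_j$ sharing a vertex with $s(a_i) = -1$ and $s(a_j) = 1$, and the alternating condition, together with the zero relations $\al_i\be^i = 0 = \be_i\al^i$ in the Brauer quiver, forces all such morphisms to be null-homotopic.

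For indecomposability, I would note that any two consecutive edges $a_k, a_{k+1}$ of the walk share a vertex, so the corresponding off-diagonal entry of $g$ is nonzero; this connectivity prevents $P(w,s)$ from splitting as a nontrivial direct sum. Injectivity of the assignment $(w,s) \mapsto P(w,s)$ is then immediate, since both the multiset of edges appearing in $w$ and their signs can be read off from the isomorphism class of $P(w,s)$ in $\Kb(\proj A)$.

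The main obstacle is surjectivity: showing every indecomposable 2-term pretilting complex $T$ arises as some $P(w,s)$. Writing $T = (T^{-1} \xrightarrow{f} T^0)$ with minimal differential (so $\add T^{-1} \cap \add T^0 = 0$ by Lemma \ref{no common}), the indecomposable summands of $T^{-1} \oplus T^0$ determine a finite multiset of edges of $G$ together with a sign $\pm 1$ recording whether each summand sits in degree $-1$ or degree $0$. Using the pretilting condition and Lemma \ref{hom-dim}, together with the fact that $G$ is a tree, one shows that these edges must form a single connected walk (else $T$ decomposes), and the signs must alternate along it. This reduction is precisely the content of \cite[Theorem 4.6]{AAC}, to which we appeal to conclude.
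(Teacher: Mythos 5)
Your proposal is correct and ultimately rests on the same foundation as the paper: the paper gives no independent proof of Proposition~\ref{alter walk}, simply deriving it from Lemma~\ref{hom-dim} and the classification in \cite[Theorem 4.6]{AAC}, which is exactly where you place the substantive burden (surjectivity). The extra verifications you sketch (well-definedness of $g$, pretiltingness via the zero relations, indecomposability, injectivity) are consistent with the paper's alternative explanation in the remark following the proposition, which routes through indecomposable $\tau$-rigid string modules and their minimal projective presentations \cite{AZ,AIR,WW}.
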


\begin{rmk}
We can also explain Proposition \ref{alter walk} as follows. First, indecomposable $\tau$-rigid $A$-modules are easily classified (for example \cite{AZ}). Second, there is a bijective map from the set of indecomposable $\tau$-rigid $A$-modules to $\indtptilt A$, which is given by taking the minimal projective presentation \cite[Theorem 3.2]{AIR}. 
Since $\tau$-rigid $A$-modules are given as string modules, we get $P(w,s)$ above by the description of minimal projective presentations of string modules given by \cite{WW}.

\end{rmk}

\begin{exm}
(1) Let $(w,s), (w',s')$ be alternating signed walks in $G$ with $w=a_{1}a_2\cdots a_\ell$ and 
$w'=a'_1a'_2\cdots a'_\ell$.
If $s(a_1)=-1= s(a_\ell)$ and $s(a'_1)=1,s(a'_\ell)=-1$, then
$P(w,s)$ and $P(w',s')$ take the following forms respectively.
$$
\vcenter{
\xymatrix@R=5pt@C=60pt{
P_{a_1} \ar[rd] & \\ &  P_{a_2}\\
P_{a_3} \ar@{--}[dd]\ar[ru]\ar[rd]&\\
 & P_{a_4} \ar@{--}[dd]\\
P_{a_{\ell-2}}\ar[rd] &  \\
 & P_{a_{\ell-1}} \\
P_{a_\ell}, \ar[ru] &
}
}
\qquad
\vcenter{
\xymatrix@R=5pt@C=60pt{
 &  P_{a'_1}\\
P_{a'_2} \ar@{--}[dd]\ar[ru]\ar[rd]&\\
 & P_{a'_3} \ar@{--}[dd]\\
P_{a'_{\ell-2}}\ar[rd] &  \\
 & P_{a'_{\ell-1}} \\
P_{a'_\ell}, \ar[ru] &
}
}
$$
Here, an arrow $P_a \to P_b$ represents the map $\la_{p(a,b)}$.

(2) Let $G$ be the following Brauer tree.
$$
\xymatrix@M=0pt{
&&&&&&&\bullet\\
&&&&&&\bullet\\
\bullet&&&&&\bullet\\
&\bullet&\bullet&\bullet&\bullet\\
\bullet&&&&&\bullet\\
&&&&&&\bullet
\ar@{-}"3,1";"4,2"^1
\ar@{-}"4,2";"4,3"^2
\ar@{-}"4,3";"4,4"^{3}
\ar@{-}"4,4";"4,5"^4
\ar@{-}"4,5";"3,6"^{5}
\ar@{-}"3,6";"2,7"^6
\ar@{-}"2,7";"1,8"^7
\ar@{-}"5,1";"4,2"^8
\ar@{-}"4,5";"5,6"^9
\ar@{-}"5,6";"6,7"^{10}
}
$$
Then the alternating signed walk $(w, s)$ given in Example \ref{exam signed walk} corresponds to the following indecomposable pretilting complex $P(w,s)$
$$\xymatrix@R=3pt@C=50 pt{
 &  P_{2}\\
P_{3} \ar[rd] \ar[ru] & \\
& P_{4} \\
P_{5}\ar[rd] \ar[ru]&\\
 & P_{6}\\ }$$
\end{exm}

From Proposition \ref{alter walk}, we obtain the following result.

\begin{lem}\label{divide half}
\begin{enumerate}
\item 
For any $T\in\indtptilt A$, 
there exists a unique $T^\vee \in\indtptilt A$
such that $g(T^\vee) = -g(T)$. 

\item Let $i \in Q_0$, and let
$\{M_1, \dots, M_\ell\}$ $($resp.\ $\{N_1, \dots, N_m\})$
be the set of $\indtptilt (A)^\leq_i$ (resp. $\indtptilt (A)^\geq_i$).
Then we have $\ell=m$ and $$\{g(N_1), \dots, g(N_\ell)\}=\{-g(M_1^\vee), \dots, -g(M_\ell^\vee)\}.$$
\end{enumerate}
\end{lem}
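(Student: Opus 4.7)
The plan is to use the bijection of Proposition \ref{alter walk} to reduce both assertions to the elementary sign-flip involution on alternating signed walks of $G$.

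For part (1), given $T \in \indtptilt A$, I would write $T = P(w,s)$ for the unique alternating signed walk $(w,s)$ with $w = a_1 \cdots a_\ell$ guaranteed by Proposition \ref{alter walk}, and define $T^\vee := P(w,-s)$, where $-s$ flips every sign. The effect on the defining complex is simply to swap its degree-$0$ and degree-$(-1)$ terms, so from the formula for $P(w,s)$ one reads off
\[
g(T^\vee) \;=\; \sum_{s(a_j)=-1}[P_{a_j}] - \sum_{s(a_i)=1}[P_{a_i}] \;=\; -g(T),
\]
establishing existence. For uniqueness, I would argue that $g\colon \indtptilt A \to \bbZ^n$ is injective: the support of $g(T)$ recovers the set of edges $\{a_1,\dots,a_\ell\}$ of the walk $w$, and since $G$ is a tree, a walk is determined by its edge set (as a connected subgraph of a tree); moreover the signs of the coordinates of $g(T)$ recover the function $s$. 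Hence $T$ is determined by $g(T)$, so $T^\vee$ is unique.

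For part (2), Lemma \ref{upper or lower}(1) reformulates the sets in question as
\[
\indtptilt(A)_i^\leq = \{T \in \indtptilt A \mid g(T) \in H_i^\leq\}, \qquad \indtptilt(A)_i^\geq = \{T \in \indtptilt A \mid g(T) \in H_i^\geq\}.
\]
Since $g(T^\vee)=-g(T)$, the involution $T \mapsto T^\vee$ exchanges the half-spaces $H_i^\leq$ and $H_i^\geq$, and therefore restricts to a bijection between $\indtptilt(A)_i^\leq$ and $\indtptilt(A)_i^\geq$. This immediately gives $\ell = m$ and $\{N_1,\dots,N_\ell\} = \{M_1^\vee,\dots,M_\ell^\vee\}$ as subsets of $\indtptilt A$; applying $g$ on both sides yields the claimed equality of $g$-vector sets (up to the sign convention in the statement, since $g(M_j^\vee) = -g(M_j)$).

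The only genuinely nontrivial input is the injectivity of $g$ on $\indtptilt A$ used in the uniqueness portion of (1); but this is a purely combinatorial matter handled cleanly by the support argument above, so I expect no real obstacle. Everything else is a bookkeeping consequence of Proposition \ref{alter walk} together with the geometric reformulation provided by Lemma \ref{upper or lower}.
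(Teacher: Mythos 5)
Your proposal is correct and follows essentially the same route as the paper, which also defines $T^\vee := P(w,-s)$ via Proposition \ref{alter walk} and deduces (2) from (1); you merely supply the details (injectivity of $g$ on $\indtptilt A$ via the support of the $g$-vector, and the half-space reformulation from Lemma \ref{upper or lower}) that the paper leaves implicit. Your parenthetical about the sign convention is well taken: since $g(M_j^\vee)=-g(M_j)$, the equality proved is $\{g(N_k)\}_k=\{g(M_j^\vee)\}_j=\{-g(M_j)\}_j$, which is what the statement intends.
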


\begin{proof}
(1) By Proposition \ref{alter walk}, we have an alternating signed walk $(w,s)$ such that $T=P(w,s)$. 
Then 
we can take 
$T^\vee:= P(w,-s)$. (2) follows from (1).
\end{proof}

%
%
%
%
%

The following lemma can be regarded as a special case of \cite[Proposition 5.11]{AAC}. For the convenience of the reader, we give a proof here. 

\begin{lem}\label{lem:tau-rigid-connected}
Let $(w, s), (w', s')$ be alternating signed walks
such that $P(w,s) \ds P(w', s')$ is pretilting.
Then
\begin{enumerate}
\item
$s(a) = s'(a)$ for all $a \in w_1 \cap w'_1$; and
\item
If $w_1 \cap w'_1 = \emptyset$, and there is a walk 
$\xymatrix@M=0pt@C=15pt{
\bullet\ar@{-}[r]^{a}&\bullet \ar@{-}[r]^{b} &\bullet}$
of length $2$ such that $a \in w_1, b \in w'_1$ then
$s(a) = s'(b)$.
\end{enumerate}
\end{lem}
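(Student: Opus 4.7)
The plan is to derive both (1) and (2) from the pretilting hypothesis, which gives in particular that $\Hom_{\Kb(\proj A)}(P(w,s)\oplus P(w',s'),(P(w,s)\oplus P(w',s'))[1])=0$. Since both complexes are concentrated in degrees $-1$ and $0$, a chain map $P\to Q[1]$ is simply a morphism $P^{-1}\to Q^0$ (the commutativity squares are automatic because $Q[1]^0=0$), and a null-homotopy takes the form $h^0 d_P + d_Q h^{-1}$ with $h^{-1}\colon P^{-1}\to Q^{-1}$ and $h^0\colon P^0\to Q^0$. Lemma~\ref{hom-dim} then reduces each contribution to an explicit product of paths in the Brauer quiver, and both conclusions will follow by exhibiting specific morphisms that cannot be represented in this form.

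For (1), I assume $a\in w_1\cap w'_1$ with $s(a)\ne s'(a)$. After possibly interchanging the two signed walks, we may take $s(a)=-1$ and $s'(a)=+1$, so that $P_a$ is a direct summand of $P(w,s)^{-1}$ and of $P(w',s')^0$. The chain map $f\colon P(w,s)\to P(w',s')[1]$ whose $(P_a,P_a)$-block is $\id_{P_a}$ and all other entries zero will serve as the obstruction. Indeed, the projection of $h^0d_P+d_Qh^{-1}$ onto the $(P_a,P_a)$-block is, by Lemma~\ref{hom-dim}, a sum of compositions of the form $\lambda_{p(c,a)}\circ\varphi$ or $\psi\circ\lambda_{p(a,c)}$ with $c\ne a$, and each such summand is left-multiplication by a path of length $\ge 2$ in $e_aAe_a=\End(P_a)$. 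Any such element lies in $\rad\End(P_a)$, while $\id_{P_a}$ does not, because $\End(P_a)$ is local ($A$ being basic). Hence $f$ is not null-homotopic, contradicting pretilting.

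For (2), I assume $a\in w_1$, $b\in w'_1$ with $a\ne b$, that $a$ and $b$ share a vertex $x$, and that $s(a)\ne s'(b)$; again we may reduce to $s(a)=-1$ and $s'(b)=+1$. Consider the chain map $f\colon P(w,s)\to P(w',s')[1]$ whose only nonzero component is $\lambda_{p(a,b)}\colon P_a\to P_b$. A null-homotopy would force
$$\lambda_{p(a,b)}=\sum_c h^0_{b,c}\circ\lambda_{p(a,c)}+\sum_d\lambda_{p(d,b)}\circ h^{-1}_{d,a},$$
where $c$ runs over $w$-neighbors of $a$ with $s(c)=+1$ and $d$ over $w'$-neighbors of $b$ with $s'(d)=-1$. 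Because $G$ is a tree and the walks are simple, any such $c$ (resp.\ $d$) that does not share $x$ with $a$ (resp.\ with $b$) also shares no vertex with $b$ (resp.\ $a$), so the corresponding summand vanishes by Lemma~\ref{hom-dim}. For the remaining contributions at $x$, a case analysis in the cyclic order $\sigma_x$, using the zero relations $\alpha_i\beta^i=\beta_i\alpha^i=0$ together with the commutativity relations of $A_G$, will show that every surviving composition either vanishes or is left-multiplication by a path strictly longer than $p(a,b)$, hence lies in a strictly higher power of $\rad$ and cannot combine to produce the generator $\lambda_{p(a,b)}$ of the one-dimensional space $\Hom(P_a,P_b)$; contradiction.

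The hard part will be this combinatorial analysis at the shared vertex $x$ in (2): one has to track where the $w$-neighbor $c$ of $a$ and the $w'$-neighbor $d$ of $b$ sit in the cyclic ordering $\sigma_x$ relative to $a$ and $b$, and verify case-by-case that the resulting composition is either blocked by a zero relation between $\alpha$- and $\beta$-arrows at $x$ or is pushed into strictly deeper radical. The simplicity of the walks $w$ and $w'$, combined with the sign constraints forcing $c\ne a$ and $d\ne b$, will be the combinatorial ingredients that drive this step.
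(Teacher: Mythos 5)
Your treatment of part (1) is correct and essentially complete, but it runs in the opposite degree from the paper's: you produce a nonzero class in $\Hom_{\Kb(\proj A)}(P(w,s),P(w',s')[1])$ and kill the null-homotopies by the locality of $\End(P_a)$ (every homotopy term factors through a radical map, since the differentials of $P(w,s)$ and $P(w',s')$ are matrices of maps between pairwise non-isomorphic indecomposable projectives), whereas the paper works with $\Hom_{\Kb(\proj A)}(P(w,s),P(w',s')[-1])$, where there are no homotopies at all and the work goes into checking that the socle endomorphism of $P_a$ satisfies $fu=0=vf$. Both arguments are valid for (1); yours trades the socle computation for the radical/locality argument.

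Part (2) is where the proposal fails, and the failure is not that the deferred case analysis at the shared vertex $x$ is merely laborious: the conclusion you want it to deliver --- that every surviving composition ``either vanishes or is left-multiplication by a path strictly longer than $p(a,b)$'' --- is false. If $c$ is the $w$-neighbour of $a$ attached at $x$ with $s(c)=+1$, and $c$ lies on the directed $\si_x$-arc from $a$ to $b$, then $p(c,b)\,p(a,c)=p(a,b)$ exactly (same length, not longer), so the homotopy term $h^0_{b,c}\circ\la_{p(a,c)}$ sweeps out all of $\k\,\la_{p(a,b)}$ and your obstruction class is null-homotopic. This configuration genuinely occurs and genuinely kills the conclusion: for the Brauer star with three edges $1,2,3$ at the centre $x$ and $\si_x=(1\,2\,3)$, take $w=\{1,2\}$ with $s(1)=-1$, $s(2)=+1$ (so $P(w,s)=(P_1\to P_2)$, the minimal presentation of $\top P_2$) and $w'=\{3\}$. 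One checks directly, or via the $\tau$-rigid pairs $(\top P_2\ds P_3,\,0)$ and $(\top P_2,\,P_3)$, that both $(P_1\to P_2)\ds P_3$ and $(P_1\to P_2)\ds P_3[1]$ are pretilting, although the adjacent pair $(1,3)$ (respectively $(2,3)$) has $s(a)\ne s'(b)$. So no choice of obstruction map can close your gap at this level of generality; the same configuration is also what makes the paper's one-line claim that $fu=0=vf$ ``by the zero relations'' delicate, since a $w$-neighbour of $a$ at $x$ produces a composite lying entirely inside the cycle at $x$, untouched by the relations $\al_i\be^i=\be_i\al^i=0$.

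The positive news is that your argument for (2) closes immediately, with no case analysis at all, under the additional hypothesis that $a$ is the \emph{only} edge of $w$ at the shared vertex and $b$ the only edge of $w'$ there (i.e.\ that vertex is an end-vertex of both walks): then every $c$ in your first sum and every $d$ in your second is attached at the other endpoint of $a$ (resp.\ of $b$), the tree structure forces $\Hom(P_c,P_b)=0=\Hom(P_a,P_d)$, and the class of $\la_{p(a,b)}$ survives. That is the version of the statement your method actually proves, and it is the version you should record; if you want the lemma in the generality needed for its later applications, you must first pin down precisely which edges of $w_1\cup w'_1$ can meet the shared vertex in those applications.
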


\begin{proof}
Although the statement (1) follows from Lemma \ref{no common}, we also give a proof of this because it is possible to prove both statements by similar arguments. 
We set $w = a_1a_2\cdots a_m, w' = b_1b_2\cdots b_n$, and
$T:= P(s, w) \ds P(s', w')$.
Complexes $P(s, w)$ and $P(s', w')$ have the following forms:
$$
\begin{aligned}
P(s, w)&: \cdots \to 0 \to \Ds_{i\in I_1}P_{a_i} \ya{u} \Ds_{i\in I_0}P_{a_i}
\to 0 \to \cdots\\
P(s', w')&: \cdots \to 0 \to\Ds_{j\in J_1}P_{b_j} \ya{v} \Ds_{j\in J_0}P_{b_j}
\to 0 \to \cdots,
\end{aligned}
$$
where $I_0:= \{i \mid s(a_i) = 1\}, I_1:= \{i \mid s(a_i) = -1\}$
and $J_0:= \{j \mid s'(b_i) = 1\}, J_1:= \{j \mid s'(b_i) = -1\}$.

{\bf (1).}
Assume that $s(a) \ne s'(a)$ for some $a \in w_1 \cap w'_1$. Then $a = a_g = b_h$
for some $1 \le g \le m$ and $1 \le h \le n$, and
we have either (a) $g \in I_0, h \in J_1$
or (b) $g \in I_1, h \in J_0$.
Then we can take a nonzero map $f \in \Hom_A(P_a, P_a)$ as the composite:
$P_a \twoheadrightarrow \top P_a \iso \soc P_a \hookrightarrow P_a$,
which is a left multiplication of an element $x$ of the simple socle of $P_a$.
Thus $x$ has a property that
\begin{equation}\label{eq:soc-soc}
(\rad A)x = 0 = x(\rad A).
\end{equation}
The nonzero map $f$ is regarded as a map either in
$\Hom_A(\Ds_{i\in I_0}P_{a_i}, \Ds_{j\in J_1}P_{b_j})$ in case (a); or
in $\Hom_A(\Ds_{j\in J_0}P_{b_j}, \Ds_{i\in I_1}P_{a_i})$ in case (b).
In case (a) we have a commutative diagram
$$
\xymatrix{
\Ds_{i\in I_1}P_{a_i} & \Ds_{i\in I_0}P_{a_i} & 0\\
0   &\Ds_{i\in J_1}P_{b_i} & \Ds_{i\in J_0}P_{b_i}
\ar"1,1";"1,2"^u
\ar"1,2";"1,3"
\ar"2,2";"2,3"_v
\ar"2,1";"2,2"
\ar"1,2";"2,2"^f
\ar"1,1";"2,1"
\ar"1,3";"2,3"
}
$$
by \eqref{eq:soc-soc} because $u, v$ are given by matrices all entries of which are left multiplications of elements of $\rad A$.

Consider the following diagram to compute
$\Hom_{\Kb(\proj A)}(P(w,s), P(w',s')[-1])$:
$$
\xymatrix{
\Ds_{i\in I_1}P_{a_i} & \Ds_{i\in I_0}P_{a_i} & 0\\
0   &\Ds_{i\in J_1}P_{b_i} & \Ds_{i\in J_0}P_{b_i}
\ar"1,1";"1,2"
\ar"1,2";"1,3"
\ar"2,2";"2,3"
\ar"2,1";"2,2"
\ar"1,2";"2,2"^f
\ar@{-->}"1,3";"2,2"
\ar@{-->}"1,2";"2,1"
}
$$
This shows that $\Hom_{\Kb(\proj A)}(P(w,s), P(w',s')[-1]) \ne 0$.
Thus $\Hom_{\Kb(\proj A)}(T, T[-1]) \ne 0$, and $T$ is not pretilting, a contradiction.

In case (b) the same argument applies to see that $T$ is not pretilting.

{\bf (2).}
Assume that $s(a) \ne s'(b)$.
Since $a \in w_1, b \in w'_1$, we have $a = a_g, b = b_h$ for some
$1 \le g \le m, 1\le h \le n$.
It follows from $s(a) \ne s'(b)$ that either (a) $g \in I_0, h \in J_1$
or (b) $g \in I_1, h \in J_0$.
Since $0 \ne \la_{p(a,b)} \in \Hom_A(P_{a_g}, P_{b_{h}})$ and
$0 \ne \la_{p(b,a)} \in \Hom_A(P_{b_h}, P_{a_{g}})$,
we can take a nonzero map $f:=\la_{p(a,b)}$ in
$\Hom_A(\Ds_{i\in I_0}P_{a_i}, \Ds_{j\in J_1}P_{b_j})$ in case (a); or
$f:=\la_{p(b,a)}$ in $\Hom_A(\Ds_{j\in J_0}P_{b_j}, \Ds_{i\in I_1}P_{a_i})$ in case (b).
Using the zero relations for Brauer tree algebras
(i.e., $\al_i\be^i=0, \be_i\al^i = 0$ for each vertex $i$ of $Q_G$)
we have $fu =0, vf=0$ in case (a), and $fv = 0, uf = 0$ in case (b).
The rest is the same as in the case (1).
Therefore in each case $T$ is not pretilting.
\end{proof}

%

The following proposition implies that $g$-polytope $\calP(A)$ is convex. 

\begin{prp}\label{union is convex}
We have
$$\calP(A) = 
\conv(g(M)\ |\ M \in \indtptilt A).$$ 

\end{prp}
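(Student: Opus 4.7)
The statement asserts an equality of two convex sets, and I would treat each inclusion separately.

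For the inclusion $\calP(A) \subseteq \conv(g(M) \mid M \in \indtptilt A)$, the key input is Lemma \ref{divide half}(1): every $M \in \indtptilt A$ has a dual $M^\vee \in \indtptilt A$ with $g(M^\vee) = -g(M)$. Hence
\[
0 \ =\ \tfrac{1}{2}\bigl(g(M) + g(M^\vee)\bigr)
\]
belongs to the right-hand side. Since each $\conv_0(T) = \conv(0, g(T_1), \ldots, g(T_n))$ for $T = \bigoplus_i T_i \in \ttilt A$ is thereby contained in the right-hand side, so is the union $\calP(A) = \bigcup_{T} \conv_0(T)$.

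For the reverse inclusion $\conv(g(M) \mid M \in \indtptilt A) \subseteq \calP(A)$, each generator $g(M)$ already lies in $\calP(A)$ by definition, so it suffices to prove that $\calP(A)$ is convex. For this I would invoke Proposition \ref{union will be convex}, which reduces the task to verifying that $\ttilt A$ admits at most two indecomposable left approximations for every Brauer tree algebra $A$.

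Verifying this ``at most two'' property is the main obstacle and is where the Brauer tree combinatorics enters. I would use the bijection of Proposition \ref{alter walk} to write each indecomposable summand as $T_i = P(w_i, s_i)$. Fixing a summand $T_i$, the middle term $\bigoplus_{k\in I}T_k$ of the left mutation triangle is (up to isomorphism) the minimal left $\add(T/T_i)$-approximation of $T_i$; combinatorially it consists of those $T_k$ whose alternating signed walk $w_k$ ``extends'' $w_i$ past one of its two endpoints with the sign compatibility forced by Lemma \ref{lem:tau-rigid-connected}. Since a walk in a tree has exactly two endpoints and, at each endpoint, Lemma \ref{lem:tau-rigid-connected} together with the tree structure of $G$ allows at most one compatible extension among the remaining $T_k$'s, the set $I$ has cardinality at most $2$. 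Once this is proved, Proposition \ref{union will be convex} gives the convexity of $\calP(A)$ and the reverse inclusion follows, completing the proof.
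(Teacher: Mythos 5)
Your overall architecture coincides with the paper's: both inclusions are handled the same way, and everything is reduced, via Proposition \ref{union will be convex}, to showing that every left mutation sequence has middle term with at most two indecomposable summands, i.e.\ $\#I\le 2$. (Two small points: your derivation of $0\in\conv(g(M)\mid M\in\indtptilt A)$ from Lemma \ref{divide half}(1) is cleaner than the paper's appeal to its convention for $\conv$, which builds the origin in; on the other hand, ``each $g(M)$ lies in $\calP(A)$ by definition'' is not quite right --- you need the completion result \cite[Theorem 2.10]{AIR} guaranteeing that every indecomposable $2$-term pretilting complex lies in $\conv_0(T)$ for some $T\in\ttilt A$.)

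The genuine gap is in the verification of $\#I\le2$, which is the heart of the proof. Your combinatorial description of the approximation --- that its summands are the walks which ``extend $w_i$ past one of its two endpoints'' --- is not correct, and the counting based on it does not go through. In the paper's analysis the summands $T_k=P(w^{(k)},s^{(k)})$ are controlled by the walk $w'$ of the \emph{mutated} complex $T_i'$: from the triangle one gets $g(T_i)+g(T_i')=\sum_{k\in I}g(T_k)$, whence $w^{(k)}_1\subseteq w_1\cup w'_1$ by Lemma \ref{lem:tau-rigid-connected}(1), and the walks $w^{(k)}$ turn out to be recombinations of $w$ and $w'$ across their common segment $w\cap w'$ (for instance $b_1\cdots b_pa_1\cdots a_qc'_1\cdots c'_{r'}$ in the paper's Case 1(a)); such a walk shares an interior or initial segment with $w$ and then branches off, rather than containing $w$ and extending it past an endpoint. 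Your sketch never introduces $T_i'$ into the combinatorics, and without the containment $w^{(k)}_1\subseteq w_1\cup w'_1$ nothing rules out summands supported away from $w_i$, nor several summands meeting $w_i$ near the same endpoint; Lemma \ref{lem:tau-rigid-connected} by itself only constrains signs, not the number of walks. If you want to avoid the case analysis, the clean alternative (recorded in the paper's remark following the proposition) is to verify $\#I\le2$ for $T=A$ itself --- where the approximation of $e_iA$ is $\Ds_{i\to j}e_jA$ and the Brauer quiver has exactly two arrows out of each vertex, one for each endpoint of the edge $i$, so here your ``two endpoints'' intuition is literally correct --- and then transport the mutation triangle of a general $T$ through the derived equivalence $\Kb(\proj A)\to\Kb(\proj \End_{\Kb(\proj A)}(T))$, using Proposition \ref{Rmk:Kmove} to know that the endomorphism algebra is again a Brauer tree algebra.
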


\begin{proof}
By the definition of convex hull \eqref{eq:dfn-conv-hull}, 
we have $\conv_0(T) \subseteq \conv(g(M)\ |\ M \in \indtptilt A)$
for all $T \in \ttilt A$ and 
therefore $\calP(A)$ is included in the right hand side.
Now for each $M \in \indtptilt A$, there exists some $T \in \ttilt A$ such that $g(M)$ is contained in $\conv_0(T)$ \cite[Theorem 2.10]{AIR}.
Hence all $g(M)$ are contained in $\calP(A)$.
Therefore to show the converse inclusion it is enough to show that $\calP(A)$ is convex
by the definition of convex hulls
(Definition \ref{dfn:triang}(1)).

{To this end, it is enough to show the following by Proposition \ref{union will be convex}:}

\medskip
{$(*)$ $\ttilt A$ admits at most two indecomposable left approximation.}
\medskip
\\
Namely, for each $T\in\ttilt A$ with a decomposition
$T= \Ds_{i=1}^n T_i$ into indecomposable complexes $T_i \in\ttilt A$ ($1 \le i \le n$), and for any left mutation sequence  
$$\xymatrix{
T_i \ar[r]^(0.4){} & \Ds_{k \in I} T_k \ar[r] & T_i' \ar[r]&T_i[1]
}$$
with $T_i'\in\tptilt A$, we only have to show that $\#I = |\Ds_{k\in I} T_k |\leq 2$.

Now from the sequence above we have
\begin{equation}\label{eq:sum-g}
g(T_i)+g(T_i') = \sum_{k\in I} g(T_k).
\end{equation}
Furthermore, since $T_i, T_i', T_k \in\indtptilt A$\ ($k \in I$), it follows
by Proposition \ref{alter walk}
that $g(T_i), g(T_i')$ and $g(T_k)\ (k \in I)$ are given by some alternating signed walks $(w,s), (w', s')$ and $(w^{(k)}, s^{(k)})$,
respectively. 
First note that since $\Ds_{k \in I} T_k$ is pretilting,
we have
\begin{equation}\label{eq:walk-inside}
w_1^{(k)} \subseteq w_1 \cup w'_1 \ \text{ for all $k \in I$}
\end{equation}
by Lemma \ref{lem:tau-rigid-connected}(1).
Set $w'':= w \cap w'$.
We first consider the case that 
$w''_0 \ne \emptyset$, and therefore that $w''$ is a walk of a length $q \ge 0$.

{\bf Case 1.} $q > 0$.
Let $w'':= a_1a_2\cdots a_q$.
Then we may write
$$
\begin{aligned}
w &= b_1b_2\cdots b_p a_1a_2\cdots a_q c_1c_2\cdots c_r\\
w' &= b'_1b'_2\cdots b'_{p'} a_1a_2\cdots a_q c'_1c'_2\cdots c'_{r'}
\end{aligned}
$$
with $p,r,p',r' \ge 0$ as in the following figure.
$$
\vcenter{
\xymatrix@R=12pt@M=0pt{
\bullet&&&&&&&\bullet\\
&\ddots\ &&&&&\ \adots\\
&&\bullet&\bullet&\ \cdots\ &\bullet\\
&\ \adots&&&&&\ddots\ \\
\bullet&&&&&&&\bullet
\ar@{-}"1,1";"2,2"^{b_1}
\ar@{-}"2,2";"3,3"^{b_p}
\ar@{-}"3,3";"3,4"^{a_1}
\ar@{-}"3,4";"3,5"^{a_2}
\ar@{-}"3,5";"3,6"^{a_q}
\ar@{-}"3,6";"2,7"^{c_1}
\ar@{-}"2,7";"1,8"^{c_r}
\ar@{-}"4,2";"3,3"^{b'_{p'}}
\ar@{-}"3,6";"4,7"^{c'_1}
\ar@{-}"4,7";"5,8"^{c'_{r'}}
\ar@{-}"5,1";"4,2"^{b'_1}
}}
$$
Since $T_i \not\iso T'_i$, we have $(p,r, p', r') \ne (0,0,0,0)$.
Without loss of generality we may assume that $p > 0$.

We divide this case into two cases (a) and (b) below.

{\bf Case (a).} $s(a_1) = s'(a_1)$.
Take $k \in I$ such that $b_1 \in w^{(k)}_1$ and $s^{(k)}(b_1) = s(b_1)$.
Such a $k$ exists because
$s^{(j)}(b_1) \in \{0, 1, -1\}$ for all $j \in I$
and $\sum_{j\in I}s^{(j)}(b_{t-1}) = s(b_{t-1}) = \pm 1$.
(In this case, $k$ is unique by Lemma \ref{lem:tau-rigid-connected}(1).)
Then, since $\Ds_{k\in I} T_k$ is pretilting, it follows
by Lemma \ref{lem:tau-rigid-connected}(2) that
$\{b_1, b_2, \dots, b_p\} \subseteq w^{(k)}_1$.
Indeed, if $b_{t-1} \in w_1^{(k)}$
but $b_t \not\in w_1^{(k)}$ for some $2 \le t \le p$,
then there exists some $k' \in I$ such that
$b_t \in w_1^{(k')}$ and $s^{(k')}(b_{t}) = s(b_{t})$ by the same reason as above.
Here since
$s^{(k)}(b_{t-1}) = s(b_{t-1})= -s(b_{t})= -s^{(k')}(b_{t})$,
we have
$s^{(k)}(b_{t-1}) = - s^{(k')}(b_{t})$.
Note that $w_1^{(k)} \cap w_1^{(k')} = \emptyset$ by the formula \eqref{eq:walk-inside}
and by Lemma \ref{lem:tau-rigid-connected}(1).
Then
$P(w^{(k)},s^{(k)}) \ds P(w^{(k')},s^{(k')})$ cannot be
pretilting by Lemma \ref{lem:tau-rigid-connected}(2), a contradiction (see the figure below).
$$
\begin{tikzpicture}
\node (A1) at (0,0) {$\bullet$};
\node (A2) at (1,0) {$\bullet$};
\node (A3) at (2,0) {$\bullet$};
\node (A4) at (3,0) {$\cdots$};
\node (A5) at (4,0) {$\bullet$};
\node (A6) at (5,0) {$\bullet$};
\node (A7) at (6,0) {$\bullet$};
\node (A8) at (7,0) {$\cdots$};
\node (A9) at (8,0) {$\bullet$};
\node (A10) at (9,0) {$\cdots$};
\path (A1.center) edge node[auto]{$b_1$} (A2.center)
(A2.center) edge node[auto]{$b_2$}(A3.center);
\draw (A3.center)--(A4)
--(A5.center);
\path (A5.center) edge node [auto] {$b_{t-1}$} (A6.center)
(A6.center) edge node[auto]{$b_t$} (A7.center);
\draw (A7.center)--(A8);
\path (A8) edge node[auto]{$b_p$}
(A9.center)
(A9.center) edge (A10);
\node (B1) at (0,-0.2) {};
\node (B2) at (1,-0.2) {};
\node (B3) at (2,-0.2) {};
\node (B4) at (3,-0.2) {$\cdots$};
\node (B5) at (4,-0.2) {};
\node (B6) at (5,-0.2) {};
\path  (B1.center) edge node[below]{$+$} (B2.center)
(B2.center) edge node[below]{$-$}(B3.center);
\draw  (B3.center)--(B4)
--(B5.center);
\path (B5.center) edge node [below] {$+$} (B6.center) node[below=4mm, 
xshift=-10mm, black]{$(w^{(k)},s^{(k)})$};
\node (C6) at (5,-0.4) {};
\node (C7) at (6,-0.4) {};
\node (C8) at (7,-0.4) {$\cdots$};
\node (C9) at (8,-0.4) {};
\node (C10) at (9,-0.4) {$\cdots$};
\path (C6.center) edge node[below]{$-$} (C7.center)node [below=4mm, xshift=15mm]{$(w^{(k')},s^{(k')})$};
\draw (C7.center)--(C8);
\path (C8) edge node[below]{$-$}
(C9.center)
(C9.center) edge (C10);
\end{tikzpicture}
$$
Similarly we also have that
$\{b_1, b_2, \dots, b_p, \linebreak[2]
a_1, a_2,\dots, a_q \} \subseteq w^{(k)}_1$.
Then also $c_1$ or $c'_1$ is contained in $w^{(k)}_1$.
In the former case, we have $w_1 \subseteq w^{(k)}_1$ and in the latter case, we have 
$$
\{b_1, b_2, \dots, b_p, a_1, a_2,\dots, a_q,c_1'c_2'\dots c_{r'}' \} \subseteq w^{(k)}_1.
$$
By the formula \eqref{eq:walk-inside} we have
$$
w^{(k)} = w \quad\text{ or }\quad
w^{(k)} = b_1b_2\cdots b_pa_1a_2\cdots a_qc_1'c_2'\cdots c_{r'}'.
$$
But, if $w = w^{(k)}$,
then it follows from $s(b_1) = s^{(k)}(b_1)$ that also $s = s^{(k)}$,
and hence $T_i \iso T_k$, a contradiction.
Thus, we have
$w^{(k)} = b_1b_2\cdots b_pa_1a_2\cdots a_qc_1'c_2'\cdots c_{r'}'$.

Now since
$\sum_{j\in I}s^{(j)}(a_1) = s(a_1) + s'(a_1) = \pm 2$,
there exists one more $\ell \in I$ such that $a_1 \in w^{(\ell)}_1$ and
$s(a_1) = s^{(\ell)}(a_1)$.
Then by a similar argument we have
$$
w^{(\ell)} = b'_1b'_2\cdots b'_{p'}a_1a_2\cdots a_q c_1c_2\cdots c_{r}.
$$
Hence by Lemma \ref{lem:tau-rigid-connected}(1)
we see that $I = \{k, \ell\}$.
As a consequence, $\#I = 2$
as in the following figure.
$$
\begin{tikzpicture}
\node (A1) at (-4,2) {};
\node (A2) at (-3.3,1.3) {};
\node (A3) at (-2.7,0.7) {};
\node (B1) at (-2,0) {};
\node (B2) at (-1,0) {};
\node (B3) at (0,0) {$\cdots$};
\node (B4) at (1,0) {};
\node (B5) at (2,0) {};
\node (C1) at (4,2) {};
\node (C2) at (3.3,1.3) {};
\node (C3) at (2.7,0.7) {};
\node (D1) at (-4,-2) {};
\node (D2) at (-3.3,-1.3) {};
\node (D3) at (-2.7,-0.7) {};
\node (E1) at (4,-2) {};
\node (E2) at (3.3,-1.3) {};
\node (E3) at (2.7,-0.7) {};
\foreach \point in {A1,B1,B2,B4,B5,C1,D1,E1}
\draw [fill] (\point) circle (2pt);
\draw (A1.center) -- (A2.center);
\draw (A2.center) [dashed]-- (A3.center);
\draw (A3.center)-- (B1.center) -- (B2) --(B3) -- (B4) -- (B5.center) --(C3.center);
\draw (C3.center) [dashed] -- (C2.center);
\draw (C2.center) -- (C1.center);
\draw (D1.center) -- (D2.center);
\draw (D2.center) [dashed] -- (D3.center);
\draw (D3.center) -- (B1.center);
\draw (E1.center) -- (E2.center);
\draw (E2.center) [dashed] -- (E3.center);
\draw (E3.center) -- (B5.center);
\draw [densely dashed] (A1.east) -- (B1.north) -- (B5.north) -- (E1.east) node[black, right]{$(w^{(k)},s^{(k)})$};
\draw  (D1.east) -- (B1.south) -- (B5.south) -- (C1.east)node[black, right]{$(w^{(\ell)},s^{(\ell)})$};
\node at (-1.5,0.4) {$+$};
\node at (-1.5,-0.4) {$+$};
\node at (-2.2,0.7) {$-$};
\node at (-2.2,-0.7) {$-$};
\end{tikzpicture}
$$

{\bf Case (b).} $s(a_1) = -s'(a_1)$.
In this case
\begin{equation}\label{eq:b-b'}
\begin{aligned}
s(b_p) &= -s'(b'_{p'}), \text{ and}\\
\sum_{j\in I}s^{(j)}(a_t) &= s(a_t) + s'(a_t) = 0\ 
\text{for all }t \in \{1, \dots, q\}.
\end{aligned}
\end{equation}
There exists some $w^{(k)}$ such that
$b_1 \in w^{(k)}_1$ and $s^{(k)}(b_1) = s(b_1)$.
By the same argument as in the case (a) using the equality \eqref{eq:b-b'}
we see that
$\{b_1, \dots, b_p, b'_{p'}, \dots, b'_1\} \subseteq w^{(k)}_1$.
If $r = r' = 0$, then $w^{(k)}_1 = b_1\dots, b_p b'_{p'} \dots, b'_1$
and we have $\#I = 1$
by the same argument as in (a).

Otherwise we may assume $r \ge 1$, and similarly there exists some 
$w^{(\ell)}$ such that
$c_r \in w^{(\ell)}_1$ and $s^{(\ell)}(c_r) = s(c_r)$ and we see that
$w^{(\ell)} = c_r\dots c_1 c'_{1} \dots c'_{r'}$.
In this case $\#I =2$ as in the following figure.
$$
\begin{tikzpicture}
\node (A1) at (-4,2) {};
\node (A2) at (-3.3,1.3) {};
\node (A3) at (-2.7,0.7) {};
\node (B1) at (-2,0) {};
\node (B2) at (-1,0) {};
\node (B3) at (0,0) {};
\node (B4) at (1,0) {};
\node (B5) at (2,0) {};
\node (C1) at (4,2) {};
\node (C2) at (3.3,1.3) {};
\node (C3) at (2.7,0.7) {};
\node (D1) at (-4,-2) {};
\node (D2) at (-3.3,-1.3) {};
\node (D3) at (-2.7,-0.7) {};
\node (E1) at (4,-2) {};
\node (E2) at (3.3,-1.3) {};
\node (E3) at (2.7,-0.7) {};
\foreach \point in {A1,B1,B5,C1,D1,E1}
\draw [fill] (\point) circle (2pt);
\draw (A1.center) -- (A2.center);
\draw (A2.center) [dashed]-- (A3.center);
\draw (A3.center)-- (B1.center);
\draw (B5.center) -- (C3.center);
\draw (C3.center) [dashed] -- (C2.center);
\draw (C2.center) -- (C1.center);
\draw (D1.center) -- (D2.center);
\draw (D2.center) [dashed] -- (D3.center);
\draw (D3.center) -- (B1.center);
\draw (E1.center) -- (E2.center);
\draw (E2.center) [dashed] -- (E3.center);
\draw (E3.center) -- (B5.center);
\draw [densely dashed] (A1.west) -- (B1.west) -- (D1.west) node[black, left]{$(w^{(k)},s^{(k)})$};
\draw  (C1.east) -- (B5.east) -- (E1.east)node[black, right]{$(w^{(\ell)},s^{(\ell)})$};
\node at (-1.5,0.4) {$+$};
\node at (-1.5,-0.4) {$-$};
\node at (-2.2,0.7) {$-$};
\node at (-2.2,-0.7) {$+$};
\end{tikzpicture}
$$

{\bf Case 2.} $q = 0$.  This is an easier case, and
a similar argument works to show that $\#I \le 2$.

Finally consider the remaining case where $w_0'' = \emptyset$.
Then by the same argument as above we have $(w,s) = (w^{(k)}, s^{(k)})$ for some $k \in I$,
which implies that $i \in I$, a contradiction.
\end{proof}


\begin{rmk}
{The proof of $(*)$ above gives a description of $\Ds_{k \in I} T_k$.
There is an alternative proof without this description as follows,
which was informed us by the referee
and by Toshitaka Aoki.}

{
{\bf Case 1.} Assume that $T=A=\Ds_{i=1}^n e_iA$. 
In this case, 
we have left mutation sequence  
$$\xymatrix{
e_iA \ar[r]^(0.4){} &  \Ds_{k \in Q_0} e_kA \ar[r] & e_iA' \ar[r]&e_iA[1]
}$$
where $k$ runs over all vertices which are the target of outgoing arrows of $Q$ from $i$.
Then, since $A$ is a Brauer tree algebra, we have $|\Ds_{k \in Q_0} e_k A|\leq 2$ .}

{
{\bf Case 2.} 
Next we consider the general case. 
Since $T$ is a tilting complex, it gives an equivalence functor 
$F_T: \Kb(\proj A)\to \Kb(\proj A')$, where $A':=\End_{\Kb(\proj A)}(T)$. 
In particular, $F_T$ sends the above mutation sequence to the one of $\Kb(\proj A')$ and the number of indecomposable direct summands is preserved. 
Thus,  $|\Ds_{k\in I} T_k |\leq 2$ follows from Case 1 since $A'$ is also a Brauer tree algebra.}

\end{rmk}

As a consequence, we have the following result, which shows that 
the polytope $\calP(A)$ has a symmetric shape. 

\begin{cor}\label{same convex}
For each $i \in Q_0$,  we have
$$\bigcup_{T \in \tiny\ttilt A_i^\leq} \conv_0(T) = 
\conv(g(M)\ |\ M \in \indtptilt A_i^\leq),$$ 
$$\bigcup_{T \in \tiny\ttilt A_i^\geq} \conv_0(T) = \conv(g(M)\ |\ M \in \indtptilt A_i^\geq).$$
Moreover, we have 
$$\bigcup_{T \in \tiny\ttilt A_i^\geq} \conv_0(T) =-\bigcup_{T \in \tiny\ttilt A_i^\leq} \conv_0(T).
$$
\end{cor}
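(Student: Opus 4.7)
The strategy is to combine the global identity $\calP(A)=\conv(g(M)\mid M\in\indtptilt A)$ from Proposition \ref{union is convex} with the cross-section identities of Lemma \ref{upper or lower}(3). Since Proposition \ref{union is convex} presents $\calP(A)$ as a convex hull, $\calP(A)$ is convex, and hence so are the cross sections $\calP(A)\cap H_i^\le$ and $\calP(A)\cap H_i^\ge$ for every $i\in Q_0$.

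To prove the first identity, I would rewrite the left-hand side as $\calP(A)\cap H_i^\le$ via Lemma \ref{upper or lower}(3). The inclusion $\supseteq$ is then immediate: by Lemma \ref{upper or lower}(1), each $g(M)$ with $M\in\indtptilt A_i^\le$ lies in this convex set, so the convex hull does too. For the inclusion $\subseteq$, note that every indecomposable summand $T_j$ of a fixed $T\in\ttilt A_i^\le$ again lies in $\indtptilt A_i^\le$: the condition $e_iA\notin\add T^0=\add\bigoplus_j T_j^0$ forces $e_iA\notin\add T_j^0$ for every $j$, since $e_iA$ is indecomposable. Hence
\[\conv_0(T)=\conv(0,g(T_1),\dots,g(T_n))\subseteq \conv\bigl(\{0\}\cup\{g(M)\mid M\in\indtptilt A_i^\le\}\bigr).\]
It therefore remains to verify that $0$ already belongs to $\conv(g(M)\mid M\in\indtptilt A_i^\le)$; picking any $j\ne i$, both $e_jA$ and $e_jA[1]$ lie in $\indtptilt A_i^0\subseteq\indtptilt A_i^\le$ with $g$-vectors $e_j$ and $-e_j$, which average to $0$. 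Taking the union over $T\in\ttilt A_i^\le$ then yields the desired equality. The second identity follows by the symmetric argument applied to the opposite half-space.

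For the third identity, I would invoke Lemma \ref{divide half}: the involution $M\mapsto M^\vee$ restricts to a bijection $\indtptilt A_i^\le\to\indtptilt A_i^\ge$ satisfying $g(M^\vee)=-g(M)$, so the two sets of $g$-vectors are negatives of one another. Combined with the previous two identities, this gives the stated equality.

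The only mildly delicate step in this plan is the verification that $0$ lies in $\conv(g(M)\mid M\in\indtptilt A_i^\le)$, which is needed to absorb the origin vertex appearing in each $\conv_0(T)$ on the left-hand side; the pair $(e_jA,e_jA[1])$ for any $j\ne i$ handles this cleanly.
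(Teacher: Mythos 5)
Your proposal is correct and follows essentially the same route as the paper: rewrite the left-hand side as $\calP(A)\cap H^{\le}_i$ via Lemma \ref{upper or lower}(3), identify it with the convex hull using Proposition \ref{union is convex} together with Lemma \ref{upper or lower}(1)--(2), and deduce the third identity from Lemma \ref{divide half}. Your extra verification that $0\in\conv(g(M)\mid M\in\indtptilt A_i^{\le})$ is a harmless refinement (and needs $n\ge 2$ for the pair $e_jA$, $e_jA[1]$ to exist), but it is not strictly required here because the paper's explicit formula for $\conv$ in Definition \ref{dfn:triang}(1), with $\sum_i a_i\le 1$, already places the origin in every such hull.
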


\begin{proof}
We show the first equality. The second one is shown similarly.

By Lemma \ref{upper or lower} (3), 
the left hand side is $\calP(A)\cap H_i^\leq$. 
On the other hand, Proposition \ref{union is convex} implies that 
$\calP(A) = \conv(g(M)\ |\ M \in \indtptilt A).$ 
Thus the equality follows from Lemma \ref{upper or lower} (1) and (2). 
Moreover, Lemma \ref{divide half} implies that 
$$
\begin{array}{rcl}
\conv(g(M)\ |\ M \in \indtptilt A_i^\geq) & = &\conv(-g(M^\vee)\ |\ M^\vee \in \indtptilt A_i^\leq)\\
& = &-\conv(g(M^\vee)\ |\ M^\vee \in \indtptilt A_i^\leq).
\end{array}$$
Therefore we get the third equality.
\end{proof}

Now we give a proof of Theorem \ref{thm:main1}.

\begin{proof}[Proof of Theorem \ref{thm:main1}]
Let $\calP:=\bigcup_{T \in \tiny\ttilt A_i^\leq} \conv_0(T)$ and $\calQ:=\bigcup_{T \in \tiny\ttilt A_i^\geq} \conv_0(T)$. 
Since we have $\calP=-\calQ$ by Corollary \ref{same convex}, 
the $f$-vectors of the simplicial complexes induced by their unimodular triangulations coincide by Theorem \ref{thm:BM}.
Then Lemma \ref{upper or lower} (4) proves the {assertion}. 
\end{proof}


\section{Mutation and 2-term pretilting complexes}
In this section, we show that the $f$-vector, or equivalently, 
the number of $\tpjtilt(A_G)$ depends only on the number of $G$ and it is independent of the shape of $G$. For this purpose, we study 
the relation between simplicial complexes associated to algebras
which are derived equivalent to each other.  
In particular, we show that the derived equivalence functor given by mutation induces 
a correspondence between one half of one of the simplicial complexes to one half of the other. 

The aim of this section is to show the following result.

\begin{thm} \label{thm:main2}
Let $G$ be a Brauer tree with an arbitrary multiplicity and 
$A_G$ the Brauer tree algebra of $G$. 
Then the $f$-vector of $\Delta(A_G)$ only depends on the number of edges of $G$. In particular, it is invariant under derived equivalences.
\end{thm}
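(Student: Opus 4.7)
My plan is to reduce the theorem to a single Kauer-move invariance statement: for every vertex $i$ of $Q_G$ and every $1 \le j \le n$, one should have $\#\tpjtilt(A_G) = \#\tpjtilt(A_{\mu_i(G)})$, where $\mu_i$ denotes the Kauer move at $i$ of \cite{K}. Since any two Brauer trees on $n$ edges with the same multiplicity are connected by a sequence of Kauer moves and give derived equivalent algebras by Rickard's theorem \cite{R2}, and since the standing multiplicity-one reduction (to be carried out in Theorem \ref{proof4.1}) allows one to work with multiplicity $1$, this Kauer-move invariance will yield both the dependence of the $f$-vector on $n$ alone and its derived invariance.

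The crucial input is the announced Lemma \ref{Lem:num}. The Kauer tilting complex $T^{(i)} \in \ttilt A_G$ gives a standard derived equivalence $F_i \colon \Kb(\proj A_G) \isoto \Kb(\proj A_{\mu_i(G)})$. I plan to describe $F_i$ explicitly on each indecomposable projective of $A_G$ and to show that the induced $\bbZ$-linear map on Grothendieck groups preserves the coordinate hyperplane $H_i^{0}$ and interchanges the two half-spaces $H_i^{\ge}$ and $H_i^{\le}$, the point being that the $g$-vector of the unique mutated indecomposable summand of $T^{(i)}$ has $i$-th coordinate $-1$ while the remaining summands lie on $H_i^{0}$. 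Since $F_i$ sends 2-term pretilting complexes to 2-term pretilting complexes and preserves the number of indecomposable summands, Lemma \ref{upper or lower} then yields bijections
\[
\tpjtilt(A_G)_i^{\ge} \isoto \tpjtilt(A_{\mu_i(G)})_i^{\le}, \qquad \tpjtilt(A_G)_i^{0} \isoto \tpjtilt(A_{\mu_i(G)})_i^{0}.
\]

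Granting Lemma \ref{Lem:num}, the theorem follows by inclusion--exclusion combined with the symmetry of Theorem \ref{thm:main1}. For each $i$ and $j$,
\[
\#\tpjtilt(A_G) = \#\tpjtilt(A_G)_i^{\le} + \#\tpjtilt(A_G)_i^{\ge} - \#\tpjtilt(A_G)_i^{0},
\]
and applying Theorem \ref{thm:main1} to collapse the first two terms (so that $\#\tpjtilt(A_G)_i^{\le} = \#\tpjtilt(A_G)_i^{\ge}$), together with Lemma \ref{Lem:num} used for both $G$ and $\mu_i(G)$, identifies the right-hand side with the corresponding quantity for $A_{\mu_i(G)}$. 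Iterating over a chain of Kauer moves joining any two $n$-edge Brauer trees completes the proof. The main obstacle is Lemma \ref{Lem:num} itself: while the functorial properties of $F_i$ are easy, the delicate point is verifying that the hemispheres really swap under $F_i$ (with the correct index identification between $Q_{G,0}$ and $Q_{\mu_i(G),0}$). I would tackle this by translating the action of $F_i$ on $g$-vectors into the language of alternating signed walks via Proposition \ref{alter walk}, and then checking case-by-case---according to whether the walk defining an indecomposable 2-term pretilting complex passes through the edge $i$ and with which signs---that the sign of the $i$-th $g$-coordinate is inverted while zero coordinates remain zero.
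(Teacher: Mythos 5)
Your global architecture---inclusion--exclusion over $\tpjtilt(A_G)_i^{\leq}$, $\tpjtilt(A_G)_i^{\geq}$, $\tpjtilt(A_G)_i^{0}$, collapsed by the symmetry of Theorem~\ref{thm:main1}, transported across one Kauer move by Lemma~\ref{Lem:num}, then iterated using connectivity of $n$-edge Brauer trees under Kauer moves and the multiplicity reduction---is exactly the paper's proof of this theorem. The gap lies in your justification of Lemma~\ref{Lem:num}. You assert that the derived equivalence $F_i$ ``sends 2-term pretilting complexes to 2-term pretilting complexes.'' This is false, and if it were true the theorem would be nearly trivial (the whole simplicial complexes would be identified at once). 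For instance, $F_i(e_iA_G)$ is the cocone of the approximation map $\bigoplus_{i\to j} e'_jA_{\mu_i(G)} \to e'_iA_{\mu_i(G)}$ and is concentrated in cohomological degrees $0$ and $1$, hence not 2-term; since $e_iA_G \in \tpjtilt(A_G)_i^{\geq}$, this also shows your bijection is stated in the wrong direction---the restriction of $F_i$ that actually lands in 2-term complexes is $\tpjtilt(A_G)_i^{\leq} \to \tpjtilt(A_{\mu_i(G)})_i^{\geq}$, which is the direction in the paper (harmless for the final count only because Theorem~\ref{thm:main1} equates the two sides, but symptomatic of the missing argument).

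The genuine content of Lemma~\ref{Lem:num} is precisely that $F_i$, restricted to the correct half, produces 2-term complexes. The paper gets this from the order-theoretic characterization of Lemma~\ref{Lem:cal}, namely $\tpjtilt(A_G)_i^{\leq} = \{\,T \mid \mu_i^L(A_G) \geq T \geq A_G[1]\,\}$, proved by an approximation argument; since $F_i$ preserves the order $\geq$ and sends $\mu_i^L(A_G)$ to $A_{\mu_i(G)}$ and $A_G[1]$ to $\mu_i^R(A_{\mu_i(G)}[1]) \geq A_{\mu_i(G)}[1]$, the image of this interval consists of complexes sandwiched between $A_{\mu_i(G)}$ and $A_{\mu_i(G)}[1]$, i.e.\ 2-term ones. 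Your proposed case analysis via $g$-vectors and alternating signed walks verifies only the sign flip of the $i$-th coordinate on the Grothendieck group---which is correct but is the easy half---and says nothing about 2-term-ness of the image, which is not detected by the class in $K_0$. To close the gap you would need either the interval/approximation argument above or a genuinely combinatorial bijection between signed walks on $G$ and on $\mu_i(G)$ that is shown to be compatible with pretilting-ness of direct sums; neither is supplied by the sketch.
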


In the sequel, let $G$ be a Brauer tree with multiplicity 1 and $A_G$ the Brauer tree algebra of $G$. We keep Notation \ref{notation}, and we set $A:= A_G$ and $n:= |G|=|A|$.


We fix $i\in Q_0$.
Then we define the $2$-term tilting complex obtained by the left mutation from $A$
$$
\mu_i^L (A):= \mu_{e_i A}^L (A) :=
\xymatrix@R=10pt{(\cdots
0 \ar[r] & e_i A \ar[r]^-{\left(\smat{f\\0}\right)}  & \left(\Ds_{\smat{j\in Q_0\\i\to j}}e_j A\right) \ds (1-e_i) A \ar[r] & 0\cdots),  \\
}
$$
where $f$ is the left minimal $\add ((1-e_i) A)$-approximation \cite{AI}.
One can define the right mutation $\mu_i^R$ dually.

The following  results follow from \cite{K,R1} (see also  \cite{S2}).

\begin{prp} \label{Rmk:Kmove}
\begin{enumerate}
\item 
For any $i\in Q_{0}$, we have $\End_{\Kb(\proj A)} (\mu_i^L (A_G) ) \iso A_{\mu_i (G)}$, where $\mu_i (G)$ denotes the Kauer move obtained from $G$.  
\item Let $G^{\prime} $ be a Brauer tree. 
Then
$ |G| = |G^{\prime}| $ if and only if there is a sequence 
$G:=G^0$, $G^{k+1}:=\mu_{i_{k}} (G^{k})$ and $G^{\prime}:=G^m$
of Brauer trees and $i_k\in Q_{{G^k},0}$ such that 
$\End_{\Kb(\proj A)} (\mu_{i_k}^L (A_{G^k}) ) \iso A_{G^{k+1}}$ for any $0 \leq k \leq m-1$.
In particular $A_G$ and $A_{G^{\prime}}$ are derived equivalent. 
\end{enumerate}
\end{prp}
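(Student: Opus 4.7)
Both assertions are attributed in the excerpt to Kauer \cite{K} and Rickard \cite{R1}, so the plan is to recall how each follows from the explicit description of $\mu_i^L(A_G)$ given just above the statement.

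For part (1), I would compute $\End_{\Kb(\proj A)}(\mu_i^L(A_G))$ directly from the defining presentation
$$
\mu_i^L(A_G)=\bigl(0\to e_iA\to \bigl(\Ds_{i\to j}e_jA\bigr)\ds(1-e_i)A\to 0\bigr).
$$
The map out of $e_iA$ is the minimal left $\add((1-e_i)A)$-approximation, which by the zero- and commutativity-relations of a Brauer tree algebra factors through exactly the indecomposable summands $e_jA$ such that there is an arrow $i\to j$ in the Brauer quiver $Q_G$; there are at most two such $j$, one for each vertex of $G$ incident to the edge $i$ (and only one if that endpoint has valency $1$). Chain maps between the indecomposable summands of $\mu_i^L(A_G)$ are then read off directly from multiplication in $A_G$, and the resulting quiver with relations is identified with the Brauer quiver $Q_{\mu_i(G)}$ and its admissible ideal. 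The point is that the Kauer move at $i$ alters only the two cyclic orderings $\si_x,\si_y$ at the endpoints of $i$, and this alteration matches exactly the change of $\al$- and $\be$-cycles through the vertex $i$ of $Q_{\mu_i(G)}$.

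For part (2), the implication ``$\Leftarrow$'' is immediate, since a Kauer move preserves the number of edges. For ``$\Rightarrow$'', I would prove by induction on $n=|G|$ that any Brauer tree with $n$ edges can be transformed, via a finite sequence of Kauer moves, into the star-shaped (Nakayama) tree with $n$ edges. Concretely, pick a vertex $x$ of maximal valency and repeatedly apply Kauer moves at edges incident to the neighbours of $x$, which strictly decrease the diameter of the tree; the inductive step handles the smaller subtree. Composing the two sequences $G\rightsquigarrow\text{star}\rightsquigarrow G'$ yields the desired Kauer sequence. The final ``in particular'' assertion then follows from (1): at each step $\mu_{i_k}^L(A_{G^k})$ is a tilting complex in $\Kb(\proj A_{G^k})$ with endomorphism algebra isomorphic to $A_{G^{k+1}}$, so Rickard's Morita theorem for derived categories gives an equivalence $\Db(A_{G^k})\simeq\Db(A_{G^{k+1}})$, and composing these $m$ equivalences produces $\Db(A_G)\simeq\Db(A_{G'})$.

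The main obstacle is the verification in (1) of the matching of quivers and relations, because of the number of cases involved: whether $i$ is a leaf or an interior edge, whether one or both of its endpoints have valency~$1$ (so that the associated oriented cycle in $Q_G$ is a loop), and whether the new Brauer quiver $Q_{\mu_i(G)}$ gains or loses loops relative to $Q_G$. Once this combinatorial matching is in place, part (2) is essentially the elementary planar-tree statement that any two trees with $n$ edges are connected by Kauer moves, combined with Rickard's theorem applied iteratively.
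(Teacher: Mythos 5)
The paper offers no proof of this proposition: it is imported from the literature, the sentence ``The following results follow from \cite{K,R1} (see also \cite{S2})'' standing in for a proof. Your sketch reconstructs the standard arguments behind those citations, and the route is the right one: part (1) is Kauer's computation of $\End_{\Kb(\proj A)}(\mu_i^L(A_G))$ by reading off chain maps between the summands of the mutated complex and matching the result with $Q_{\mu_i(G)}$ and its relations, and part (2) combines the combinatorial connectivity of Brauer trees under Kauer moves with Rickard's derived Morita theorem applied along the chain of tilting complexes. (Rickard's own derived-equivalence proof in \cite{R2} goes instead through a single explicit tilting complex relating each Brauer tree algebra to the star, i.e.\ the symmetric Nakayama algebra; the one-move-at-a-time formulation is Kauer's.) Since the paper itself only cites, your proposal cannot be ``the same proof,'' but it is the proof the citation points to.

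Two points need repair. First, in (1) you explicitly defer the case-by-case matching of quivers and relations; that verification \emph{is} the content of Kauer's theorem, so as written this part is a plan rather than an argument. Second, your induction measure in (2) is not correct as stated: a single Kauer move need not strictly decrease the diameter of the tree. For the path with three edges, the Kauer move at the middle edge $2$ detaches it and reattaches it to the two far endpoints, producing again a path with three edges of the same diameter (this is the first picture in the example following the proposition, specialized to endpoints of valency two). The connectivity statement is nevertheless true; the usual argument fixes a target vertex (e.g.\ the exceptional one), and shows that Kauer moves can strictly increase its valency --- equivalently, strictly decrease the total distance from the edges of $G$ to that vertex --- terminating at the star. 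With the measure replaced in this way, and the reverse direction handled by invertibility of Kauer moves (left versus right mutation), your argument for (2) goes through; the final derived-equivalence claim via iterated application of Rickard's theorem is correct, granting the standard fact that $\mu_i^L(A_{G^k})$ is indeed a tilting complex over the symmetric algebra $A_{G^k}$.
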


For the convenience of the reader,  
we briefly recall the notion of Kauer move, using the following example.

\begin{exm}For a given Brauer tree graph $G$, we get the new Brauer graph $\mu_i(G)$ by applying the Kauer move associated to the edge $i$ as follows:
$$(1) \ \ 
G=\vcenter{
\xymatrix@M=0pt{
& \bullet && \bullet\\
\bullet &\bullet&&\bullet&\bullet\\
& \bullet && \bullet&
\ar@{-}"1,2";"2,2"
\ar@{-}"2,1";"2,2"
\ar@{-}"1,2";"2,2"
\ar@{-}"3,2";"2,2"
\ar@{-}"1,4";"2,4"
\ar@{-}"3,4";"2,4"
\ar@{-}"2,5";"2,4"
\ar@{-}"2,2";"2,4"^i
\ar@{}"3,1";"2,2"|(0.6){\Large\ddots}
\ar@{}"3,5";"2,4"|(0.6){\Large\adots}
}}
\quad
\overset{\mu_i(-)}{\mapsto}
\quad
\mu_i(G)=\vcenter{
\xymatrix@M=0pt{
& \bullet && \bullet\\
\bullet &\bullet&&\bullet&\bullet\\
& \bullet && \bullet&
\ar@{-}"1,2";"2,2"
\ar@{-}"2,1";"2,2"
\ar@{-}"1,2";"2,2"
\ar@{-}"3,2";"2,2"
\ar@{-}"1,4";"2,4"
\ar@{-}"3,4";"2,4"
\ar@{-}"2,5";"2,4"
\ar@{-}"1,2";"3,4"
\ar@{}"3,1";"2,2"|(0.6){\Large\ddots}
\ar@{}"3,5";"2,4"|(0.6){\Large\adots}
}}
$$

$$(2) \ \ 
G=\vcenter{
\xymatrix@M=0pt{
  &&\bullet \\
 \bullet&&\bullet&\bullet\\
  && \bullet&
\ar@{-}"1,3";"2,3"
\ar@{-}"3,3";"2,3"
\ar@{-}"2,4";"2,3"
\ar@{-}"2,1";"2,3"^i
\ar@{}"3,4";"2,3"|(0.6){\Large\adots}
}}
\quad
\overset{\mu_i(-)}{\mapsto}
\quad
\mu_i(G)=\vcenter{
\xymatrix@M=0pt{
  &&\bullet \\
 \bullet&&\bullet&\bullet\\
  && \bullet&
\ar@{-}"1,3";"2,3"
\ar@{-}"3,3";"2,3"
\ar@{-}"2,4";"2,3"
\ar@{-}"2,1";"3,3"^i
\ar@{}"3,4";"2,3"|(0.6){\Large\adots}
}}
$$
\end{exm}

Next, we introduce the following terminology. 

\begin{dfn}
Let $X, Y\in \tptilt(A)$. 
Then denote by $X \geq Y$ if $\Hom (X, Y[i])=0$ for all positive integer $i >0$.
Since complexes are 2-term, this condition is equivalent to the condition that $\Hom (X, Y[1])=0$. 
Note that $\geq$ is a partial order of $\ttilt(A)$ \cite{AI}.  
\end{dfn}

We fix $i \in Q_0$ and an integer $j$ such that $1\leq j\leq n$.

\begin{lem} \label{Lem:cal}The following equalities hold.
\begin{enumerate}
\item $\tpjtilt(A)_i^\leq = \{ T \in \tpjtilt(A) \mid \mu_i^L (A) \geq T \}$.
\item $\tpjtilt(A)_i^\geq =\{ T \in \tpjtilt(A) \mid T \geq \mu_i^R (A[1]) \} $.
\end{enumerate}
\end{lem}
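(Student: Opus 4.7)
The plan is to translate each inequality into a vanishing of an appropriate $\Hom$-space, use the mutation triangle to reduce that vanishing to surjectivity of a concrete map, and then match surjectivity with the condition on $T^{0}$ (for Part (1)) or $T^{-1}$ (for Part (2)). Since neither partial-order condition involves the cardinality $j = |T|$, it suffices to prove each identity for $\tptilt(A)$ in place of $\tpjtilt(A)$ and then restrict to the $|T|=j$ stratum on both sides.

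For (1), I unpack $\mu_i^L(A) \ge T$ as $\Hom_{\Kb(\proj A)}(\mu_i^L(A), T[1]) = 0$. Decomposing $\mu_i^L(A) = \mu_i^L(e_iA) \oplus (1-e_i)A$, the summand $(1-e_i)A$ sits in degree $0$ while $T[1]$ vanishes in degree $0$, so its contribution vanishes automatically, and the condition reduces to $\Hom(\mu_i^L(e_iA), T[1]) = 0$. Applying $\Hom_{\Kb}(-,T[1])$ to the defining triangle
$$e_iA \xrightarrow{f} X \to \mu_i^L(e_iA) \to e_iA[1],$$
with $f$ the minimal left $\add((1-e_i)A)$-approximation, the outer terms $\Hom(X, T[1])$ and $\Hom(e_iA, T[1])$ both vanish by the same degree reason, yielding
$$\Hom_{\Kb}(\mu_i^L(e_iA), T[1]) \cong \operatorname{coker}\bigl(f^{*} \colon \Hom_{\Kb}(X, T) \to \Hom_{\Kb}(e_iA, T)\bigr).$$
Hence $\mu_i^L(A) \ge T$ is equivalent to surjectivity of $f^{*}$.

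Next I match this surjectivity with $e_iA \notin \add T^{0}$. If $e_iA \notin \add T^{0}$, then $T^{0} \in \add((1-e_i)A)$, and the defining property of $f$ as a left $\add((1-e_i)A)$-approximation forces every map $e_iA \to T^{0}$ to factor through $f$, so $f^{*}$ is surjective. Conversely, if $e_iA \in \add T^{0}$, I consider the inclusion $\phi \colon e_iA \hookrightarrow T^{0}$ of such a summand; composing any representative $g^{0}\circ f + d_T \circ s$ of a class in $\operatorname{im} f^{*}$ with the retraction $T^{0} \twoheadrightarrow e_iA$, the term $g^{0}\circ f$ lands in $\rad(e_iAe_i)$ because $f$ is radical and $X \in \add((1-e_i)A)$, while $d_T \circ s$ lands in $\rad(e_iAe_i)$ because \lemref{no common} guarantees $e_iA \notin \add T^{-1}$, so every component $e_kA \to e_iA$ of $d_T$ is a positive-length path in the Brauer quiver. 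The retracted sum therefore lies in $\rad(e_iAe_i)$, contradicting that $\phi$ retracts to the identity; hence $\phi \notin \operatorname{im} f^{*}$.

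Part (2) is the mirror argument for right mutation: using the defining triangle $\mu_i^R(e_iA[1]) \to V \to e_iA[1] \to \mu_i^R(e_iA[1])[1]$ with $V$ the minimal right $\add((1-e_i)A[1])$-approximation, applying $\Hom_{\Kb}(T, -[1])$, and running the analogous radical computation in degree $-1$ identifies the vanishing of $\Hom_{\Kb}(T, \mu_i^R(A[1])[1])$ with $e_iA \notin \add T^{-1}$. The main obstacle is the radical bookkeeping in the converse direction of (1), where three independent sources of radicality ($f$, the ambient $X \in \add((1-e_i)A)$, and \lemref{no common}) must be organized to exclude the identity class from $\operatorname{im} f^{*}$; once this is set up, Part (2) follows the same template.
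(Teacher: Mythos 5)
Your proof is correct and is essentially the paper's own argument: the long-exact-sequence reduction of $\mu_i^L(A)\geq T$ to surjectivity of $f^{*}$ is just a repackaging of the paper's direct homotopy computation, and both directions then rest on the same two ingredients — the left $\add((1-e_i)A)$-approximation property of $f$ for one inclusion, and the radical argument (using \lemref{no common} to force $e_iA\notin\add T^{-1}$) to exclude the identity class for the other. The dual treatment of (2) likewise matches the paper, which simply says the second equality holds dually.
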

\begin{proof}
(1) 
Let $X= (\cdots 0 \to X^{-1} \to X^{0} \to 0 \cdots )\in \tpjtilt(A)_i^\leq$.
We will show that $\Hom_{\Kb(\proj A)} (\mu_i^L (A), X[1])=0$.
Since $e_i A \not\in \add X^{0}$ and $f$ is the approximation, there exists a homomorphism $f^{\prime}$ such that the following diagram commutes:
$$
\xymatrix{
 & e_i A \ar[r]^(0.25){\left(\smat{f\\0}\right)} \ar[d] & \left(\Ds_{\smat{j\in Q_0\\i\to j}}e_j A\right) \ds (1-e_i)A \ar@{-->}[dl]^{f^{\prime}} \\
X^{-1} \ar[r] & X^0 & 
}
$$

Thus we have $\Hom_{\Kb(\proj A)} (\mu_i^L (A), X[1])=0$.

Conversely, let $X\in \{ T \in \tpjtilt(A) \mid \mu_i^L (A) \geq T \}$.
If there exists $e_i A \in \add X^0$, 
then we can take the canonical inclusion $\iota_i$ from $e_i A$ to $X^0$. On the other hand, our assumption implies that  
there exist homomorphism $a,b$ such that $\iota_i =ga+ b_1 f$:
$$
\xymatrix{
 & e_i A \ar[r]^(0.25){\left(\smat{f\\0}\right)} \ar[d]_{\iota_i} \ar@{-->}[dl]_{a} & \left(\Ds_{\smat{j\in Q_0\\i\to j}}e_j A\right) \ds (1-e_i)A \ar@{-->}[dl]^{b=(\smat{b_1 & b_2})} \\
X^{-1} \ar[r]^g & X^0 & 
}
$$
Hence, we have 
$$\id_{e_i A}=\pi_i \iota_i \in \rad_A (e_i A ,e_i A),$$
where $\pi_i$ is the canonical projection from $X$ to $e_i A$,
which is a contradiction.

(2) 
The second equality holds dually.
\end{proof}

Next we give the following lemma.

\begin{lem} \label{Lem:num}
\begin{enumerate}
\item We have $\# \tpjtilt(A_G)_i^\leq = \#\tpjtilt(A_{\mu_i(G)})_i^\geq$.
\item We have $\# \tpjtilt(A_G)_i^0 = \#\tpjtilt(A_{\mu_i(G)})_i^0$.
\end{enumerate}
\end{lem}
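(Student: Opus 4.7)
The plan is to use the standard derived equivalence induced by the mutation to match the two halves of the respective $g$-polytopes. Let $T := \mu_i^L(A_G)$, with decomposition $T = T_i \oplus \bigoplus_{k\ne i} T_k$ where $T_k = e_k A_G$ for $k \ne i$ and $T_i = [e_i A_G \xrightarrow{f} M_i]$ for some $M_i \in \add((1-e_i)A_G)$, in degrees $-1, 0$. Set $B := A_{\mu_i(G)}\cong \End_{\Kb(\proj A_G)}(T)$ and let $F\colon \Kb(\proj A_G)\to \Kb(\proj B)$ be the derived equivalence satisfying $F(T_k) = e'_k B$ for all $k$, under the natural identification of vertices of $Q_G$ and $Q_B$. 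The key preliminary computation is $F(e_i A_G)$: applying $F$ to the defining triangle $e_iA_G \to M_i \to T_i \to e_iA_G[1]$ and using $F(T_i) = e'_i B$ and $F(M_i) \in \add(\bigoplus_{k\ne i} e'_k B)$, one sees that $F(e_i A_G)$ is the cocone of $F(M_i)\to e'_i B$, i.e.\ a 2-term complex with $F(M_i)$ in degree $0$ and $e'_i B$ in degree $1$, in particular containing no $e'_iB$ in degree $0$.

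Given $X = [X^{-1}\to X^0]\in \tpjtilt(A_G)_i^\le$ (so $e_iA_G\notin \add X^0$), I next compute $F(X)$ via the triangle $X^0 \to X \to X^{-1}[1] \to X^0[1]$, which identifies $F(X)$ with the mapping cone of $F(X^{-1})\to F(X^0)$. Letting $b_i$ denote the multiplicity of $e_i A_G$ in $X^{-1}$, a direct cone calculation combined with the preliminary step shows that the a priori multi-term complex $F(X)$ collapses to a 2-term complex with
\[
F(X)^{-1} \in \add\Big(\bigoplus_{k\ne i} e'_k B\Big),\qquad F(X)^0 \cong (e'_iB)^{b_i} \oplus F(X^0),
\]
where $F(X^0)\in \add(\bigoplus_{k\ne i} e'_k B)$ because $e_iA_G\notin \add X^0$. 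Since $F$ is a derived equivalence, $F(X)$ remains pretilting, and the display yields $e'_iB\notin \add F(X)^{-1}$; hence $F$ restricts to a map $\tpjtilt(A_G)_i^\le \to \tpjtilt(B)_i^\ge$ that preserves $|X|$. Running the symmetric argument with $F^{-1}$ (using $F^{-1}(e'_kB)=T_k$) shows that this map is a bijection, proving (1). For (2), the additional hypothesis $e_iA_G\notin \add X^{-1}$ forces $b_i = 0$, so $e'_iB$ does not appear in $F(X)^0$ either, and $F$ restricts to a bijection $\tpjtilt(A_G)_i^0 \to \tpjtilt(B)_i^0$.

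The main obstacle is the cone calculation itself: a priori $F(X)$ could have terms spanning four consecutive degrees, and it is precisely the hypothesis $e_iA_G \notin \add X^0$ combined with the fact that the $e'_iB$-summand of $F(e_iA_G)$ is concentrated in degree $1$ that forces $F(X)$ to be 2-term while simultaneously flipping the sign condition at vertex $i$.
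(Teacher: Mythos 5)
Your argument is correct, and while it rests on the same derived equivalence as the paper's proof, the mechanism by which you verify the bijection is genuinely different. Both proofs transport along the equivalence $F\colon \Kb(\proj A_G)\to\Kb(\proj A_{\mu_i(G)})$ induced by the tilting complex $\mu_i^L(A_G)$, with $F(e_kA_G)=e'_kA_{\mu_i(G)}$ for $k\ne i$. The paper, however, never computes $F$ on a single complex: it invokes Lemma~\ref{Lem:cal}, which characterizes $\tpjtilt(A)_i^\le$ and $\tpjtilt(A)_i^\ge$ order-theoretically as $\{T\mid \mu_i^L(A)\ge T\}$ and $\{T\mid T\ge \mu_i^R(A[1])\}$, and then only uses that $F$ preserves $\ge$ and sends $\mu_i^L(A_G)\mapsto A_{\mu_i(G)}$ and $A_G[1]\mapsto \mu_i^R(A_{\mu_i(G)}[1])$. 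Your cone computation replaces that lemma by an explicit description of $F(X)$; this is more work but buys more, since it exhibits the image complex (hence the induced map on $g$-vectors) concretely and makes part (2) transparent ($b_i=0$ kills every occurrence of $e'_iB$), whereas the paper's part (2) is the terse remark that $F$ restricts to an equivalence $\add(\mu_i^L(A_G))\to\proj A_{\mu_i(G)}$. Two small points worth making explicit in your write-up: passing from your cone representative of $F(X)$ to its minimal (basic) representative only deletes matching summands between degrees $-1$ and $0$, so the absence of $e'_iB$ in degree $-1$ (resp.\ everywhere) persists; and the superscript $T^{1}$ in the paper's definition of $\tptilt(A)_i^\ge$ must be read as $T^{-1}$, which is indeed how you read it.
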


\begin{proof}
(1) By Proposition \ref{Rmk:Kmove}, we have $\End_{\Kb(\proj A)} (\mu_i^L (A_G) ) \iso A_{\mu_i (G)}$. 
Then there exists the induced equivalence
$F:\Kb(\proj A_G)\to\Kb(\proj A_{\mu_i(G)})$. It sends $\mu_i^L (A_G)$ to $A_{\mu_i(G)}$, and 
 $e_k(A_G)$ to $e'_k A_{\mu_i (G)}$ for any $k\neq i\in Q_{G,0}$, where
$e'_k$ is the idempotent $\mu^L_i(A_G) \twoheadrightarrow e_kA_G \incl \mu^L_i(A_G)$ of $A_{\mu_i(G)}$.
Note that $F$ preserves $\geq$ and mutations because $F$ is an equivalence.
Thus, we have $ F(A_G [1]) \iso  \mu_i^R (A_{\mu_i (G)} [1])$.
Therefore, by Lemma~\ref{Lem:cal}, we have the equalities:
$$
\begin{array}{rcl}
\# \tpjtilt(A_G)_i^\leq  & = & \# \{ T \in \tpjtilt(A_G) \mid \mu_i^L (A_G) \geq T \} \\
     & = & \# \{ T \in \tpjtilt(A_G) \mid \mu_i^L (A_G) \geq T \geq A_G [1] \} \\
     & = & \# \{ T \in \tpjtilt(A_{\mu_i (G)}) \mid A_{\mu_i (G)} \geq T \geq \mu_i^R (A_{\mu_i (G)} [1]) \} \\
     & = & \# \{ T \in \tpjtilt(A_{\mu_i (G)}) \mid  T \geq \mu_i^R (A_{\mu_i (G)} [1]) \}\\
     & = & \# \tpjtilt(A_G)_i^\geq. 
\end{array}
$$
Thus we get (1). 
Since $F$ gives an equivalence 
$\add  (\mu_i^L (A_G))  \to \proj A_{\mu_i(G)}$, we have (2).
\end{proof}

\begin{rmk}\label{rem poset}
By Lemma~\ref{Lem:num}, we have a poset isomorphism from $\ttilt(A_G)_i^\leq$ to $\ttilt(A_{\mu_i(G)})_i^\geq$ as shown in the
following example.
\end{rmk}

\begin{exm}
Consider the following Brauer trees.
$$
G:=
\vcenter{
\xymatrix@M=0pt{
& \bullet\\
\bullet& \bullet &\bullet
\ar@{-}"1,2";"2,2"^1
\ar@{-}"2,2";"2,3"_3
\ar@{-}"2,1";"2,2"_2
}},
\quad
H:= \mu_1(G)=
\vcenter{
\xymatrix@M=0pt{
\bullet&\bullet& \bullet &\bullet
\ar@{-}"1,1";"1,2"^1
\ar@{-}"1,2";"1,3"^2
\ar@{-}"1,3";"1,4"^3
}}.
$$ 
Then $\ttilt(A_G)_1^\leq$ and $\ttilt(A_H)_1^\geq$ are given by the thick arrow parts of the left and the right figure,
respectively.
\[
\begin{tikzpicture}[every node/.style={circle,minimum size=0.05,fill=white}]
\draw (0,0) node (a) {.};
\draw (0,-1) node (b) {.};
\draw (0,-4) node (c) {.};
\draw (0,-5) node (d) {.};
\draw (0.5,-2) node (e) {.};
\draw (0.5,-3) node (f) {.};
\draw (1.5,-2) node (g) {.};
\draw (1.5,-3) node (h) {.};
\draw (2,-1) node (i) {.};
\draw (2,-4) node (j) {.};
\draw (2.5,-2) node (k) {.};
\draw (2.5,-3) node (l) {.};
\draw (-0.5,-2) node (-e) {.};
\draw (-0.5,-3) node (-f) {.};
\draw (-1.5,-2) node (-g) {.};
\draw (-1.5,-3) node (-h) {.};
\draw (-2,-1) node (-i) {.};
\draw (-2,-4) node (-j) {.};
\draw (-2.5,-2) node (-k) {.};
\draw (-2.5,-3) node (-l) {.};
\draw[draw=gray,->,ultra thin,densely dashed,bend left,distance=2] (-k) to (k);
\draw[draw=gray,->,ultra thin,densely dashed,bend right,distance=2] (-l) to (l);
\draw[draw=black,->,ultra thin] (a)--(b);
\draw[draw=black,->,ultra thin] (b)--(e);
\draw[draw=black,->,ultra thick] (e)--(f);
\draw[draw=black,->,ultra thick] (f)--(c);
\draw[draw=black,->,ultra thick] (c)--(d);
\draw[draw=black,->,ultra thin] (a)--(i);
\draw[draw=black,->,ultra thick] (i)--(g);
\draw[draw=black,->,ultra thick] (g)--(h);
\draw[draw=black,->,ultra thick] (h)--(j);
\draw[draw=black,->,ultra thick] (j)--(d);
\draw[draw=black,->,ultra thick] (i)--(k);
\draw[draw=black,->,ultra thick] (k)--(l);
\draw[draw=black,->,ultra thick] (l)--(j);
\draw[draw=black,->,ultra thick] (g)--(e);
\draw[draw=black,->,ultra thick] (h)--(f);
\draw[draw=black,->,ultra thin] (b)--(-e);
\draw[draw=black,->,ultra thin] (-e)--(-f);
\draw[draw=black,->,ultra thin] (-f)--(c);
\draw[draw=black,->,ultra thin] (a)--(-i);
\draw[draw=black,->,ultra thin] (-i)--(-g);
\draw[draw=black,->,ultra thin] (-g)--(-h);
\draw[draw=black,->,ultra thin] (-h)--(-j);
\draw[draw=black,->,ultra thin] (-j)--(d);
\draw[draw=black,->,ultra thin] (-i)--(-k);
\draw[draw=black,->,ultra thin] (-k)--(-l);
\draw[draw=black,->,ultra thin] (-l)--(-j);
\draw[draw=black,->,ultra thin] (-e)--(-g);
\draw[draw=black,->,ultra thin] (-f)--(-h);
\draw (7,0) node (A) {.};
\draw (7,-1) node (B) {.};
\draw (7,-1.5) node (C) {.};
\draw (7,-2.5) node (D) {.};
\draw (7,-3) node (E) {.};
\draw (7,-4) node (F) {.};
\draw (7,-4.5) node (G) {.};
\draw (7,-5.5) node (H) {.};
\draw (8,-2) node (I) {.};
\draw (8,-3.5) node (J) {.};
\draw (8,-5) node (K) {.};
\draw (9,-0.5) node (L) {.};
\draw (9,-3.5) node (M) {.};
\draw (9.5,-2) node (N) {.};
\draw (6,-2) node (-I) {.};
\draw (6,-3.5) node (-J) {.};
\draw (6,-5) node (-K) {.};
\draw (5,-0.5) node (-L) {.};
\draw (5,-3.5) node (-M) {.};
\draw (4.5,-2) node (-N) {.};
\draw[draw=gray,->,ultra thin,densely dashed] (L) to (C);
\draw[draw=gray,->,ultra thin,densely dashed] (C) to (D);
\draw[draw=gray,->,ultra thin,densely dashed] (D) to (J);
\draw[draw=gray,->,ultra thin,densely dashed] (J) to (G);
\draw[draw=gray,->,ultra thin,densely dashed] (G) to (H);
\draw[draw=gray,->,ultra thin,densely dashed] (N) to (J);
\draw[draw=gray,->,ultra thin,densely dashed] (-L) to (C);
\draw[draw=gray,->,ultra thin,densely dashed] (D) to (-J);
\draw[draw=gray,->,ultra thin,densely dashed] (-J) to (G);
\draw[draw=gray,->,ultra thin,densely dashed] (-N) to (-J);
\draw[draw=black,->,ultra thick] (A) to (B);
\draw[draw=black,->,ultra thick] (B) to (I);
\draw[draw=black,->,ultra thick] (I) to (E);
\draw[draw=black,->,ultra thick] (E) to (F);
\draw[draw=black,->,ultra thick] (F) to (K);
\draw[draw=black,->,ultra thin] (K) to (H);
\draw[draw=black,->,ultra thick] (A) to (L);
\draw[draw=black,->,ultra thick] (L) to (N);
\draw[draw=black,->,ultra thick] (N) to (M);
\draw[draw=black,->,ultra thick] (M) to (K);
\draw[draw=black,->,ultra thick] (I) to (M);
\draw[draw=black,->,ultra thick] (B) to (-I);
\draw[draw=black,->,ultra thick] (-I) to (E);
\draw[draw=black,->,ultra thin] (F) to (-K);
\draw[draw=black,->,ultra thin] (-K) to (H);
\draw[draw=black,->,ultra thin] (A) to (-L);
\draw[draw=black,->,ultra thin] (-L) to (-N);
\draw[draw=black,->,ultra thin] (-N) to (-M);
\draw[draw=black,->,ultra thin] (-M) to (-K);
\draw[draw=black,->,ultra thin] (-I) to (-M);
\end{tikzpicture}
\]

\end{exm}

Now we are ready to show Theorem \ref{thm:main2}.

\begin{proof}[Proof of Theorem \ref{thm:main2}]\label{proof4.1}
First, consider the case that $G$ has the multiplicity 1. 
Let $i\in Q_0$. 
By Theorem \ref{thm:main1} and Lemma~\ref{Lem:num}, 
we have the following equalities: 
$$
\begin{array}{rcl}
\# \tpjtilt{A_G} & = & \# \tpjtilt(A_G)_i^\leq\ +\ \# \tpjtilt(A_G)_i^\geq\ -\ \# \tpjtilt(A_G)_i^0\\
 & = & 2 \# \tpjtilt(A_G)_i^\leq\  -\ \# \tpjtilt(A_G)_i^0\\
 & = & 2 \# \tpjtilt(A_{\mu_i(G)})_i^\geq\ -\ \# \tpjtilt(A_{\mu_i(G)})_i^0\\
 & = & \# \tpjtilt(A_{\mu_i(G)})_i^\leq\ +\ \# \tpjtilt(A_{\mu_i(G)})_i^\geq\ -\ \# \tpjtilt(A_{\mu_i(G)})_i^0 \\
 & = & \# \tpjtilt(A_{\mu_i(G)}). \\
\end{array}
$$
By Proposition \ref{Rmk:Kmove}, if $|G|=|G'|$, then 
$G$ and $G'$ can be related by applying Kauer moves repeatedly. 
Thus we have proved Theorem \ref{thm:main2}
for Brauer tree algebras with multiplicity 1. 
Moreover, we can explain the same consequence for the case of an arbitrary multiplicity as follows. 
Proposition \ref{alter walk} holds for Brauer tree algebras with an arbitrary multiplicity by \cite[Theorem 4.6]{AAC} (this is just because the isoclasses of indecomposable 2-term pretilting complexes does not depend on the multiplicity). 
Thus the above arguments work similarly and we get the same conclusion  (this fact also follows from the result by \cite{EJR}.)
\end{proof}

\section{Enumeration of $f$-vectors}
In this section, we give {explicit descriptions of the $f$-polynomial and the $h$-polynomial} of $\Delta(A_G)$
for Brauer tree algebras $A_G$.
As an application of our results, 
we will give the formulae for biCatalan numbers in the sense of \cite{BR}. 
We denote by $\binom{n+j}{j,j,n-j}:=(n+j)!/j! j! (n-j)!$ for $0\leq j\leq n$.
We have the following result. 

\begin{thm}\label{main3}
Let $G$ be a Brauer tree and $A_G$ the Brauer tree algebra of $G$.  Then
the $f$-polynomial and the $h$-polynomial of $\Delta(A_G)$ are 
given as follows $:$ 
$$f(x) = \sum_{j=0}^{n}\binom{n+j}{j,j,n-j}x^{n-j},\ \ \ 
h(x) = \sum_{k=0}^{n}\binom{n}{j}^2x^{n-j}.$$
In particular, the number of 2-term tilting complexes of $\Kb(\proj  A_G)$ is $\binom{2n}{n}$.
\end{thm}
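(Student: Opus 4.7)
The plan reduces the whole theorem to a combinatorial enumeration on a single conveniently chosen Brauer tree, together with one routine algebraic identity. By Theorem \ref{thm:main2}, the $f$-vector of $\Delta(A_G)$ depends only on $n=|G|$, so it suffices to verify $\#\tpjtilt^{j}(A_G)=\binom{n+j}{j,j,n-j}$ for $1\le j\le n$ on any one representative tree with $n$ edges. Two natural choices are the linear tree (whose algebra is the zigzag algebra) and the star-shaped tree (whose algebra is a symmetric Nakayama algebra); for both, the total count $\#\ttilt(A_G)=\binom{2n}{n}$ is already in the literature, namely \cite{Ao1} and \cite{Ad} respectively, and the refinement by $j$ is obtained by the same methods. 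I would carry out the refined count by applying Proposition \ref{alter walk} to identify $\indtptilt(A_G)$ with alternating signed walks on the chosen tree, and then use Lemma \ref{lem:tau-rigid-connected} to characterise when $j$ such walks assemble into a 2-term pretilting complex. On the linear tree this reduces to counting non-crossing signed interval configurations inside $\{1,\dots,n\}$, a classical lattice-path problem whose answer is precisely $\binom{n+j}{j,j,n-j}$. Specialising at $j=n$ recovers $\binom{2n}{n,n,0}=\binom{2n}{n}$.

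The $h$-polynomial identity is then a purely algebraic consequence of the $f$-polynomial formula via the defining substitution $h(x)=f(x-1)$. Using the factorisation $\binom{n+j}{j,j,n-j}=\binom{n+j}{j}\binom{n}{j}$ together with the rearrangement $\binom{n}{j}\binom{n-j}{k-j}=\binom{n}{k}\binom{k}{j}$, the coefficient of $x^{n-k}$ in $f(x-1)$ becomes
\[
\binom{n}{k}\sum_{j=0}^{k}(-1)^{k-j}\binom{k}{j}\binom{n+j}{n},
\]
which equals $\binom{n}{k}^{2}$ by the standard binomial convolution $\sum_{j=0}^{k}(-1)^{k-j}\binom{k}{j}\binom{n+j}{n}=\binom{n}{k}$. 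This yields $h(x)=\sum_{k=0}^{n}\binom{n}{k}^{2}x^{n-k}$ as claimed. For Brauer trees of arbitrary multiplicity no additional work is required: Proposition \ref{alter walk} is multiplicity-independent by \cite[Theorem 4.6]{AAC}, exactly as in the final step of the proof of Theorem \ref{thm:main2}.

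The main obstacle is the refined base enumeration itself. The endpoint $j=n$ appears already in \cite{Ad,Ao1}, and the endpoint $j=1$ (the number of indecomposable objects) is an immediate consequence of Proposition \ref{alter walk}, but the intermediate values of $j$ require a uniform bijective description of \emph{all} 2-term pretilting complexes on the chosen tree, not just the maximal ones. Fortunately on the linear tree (or the star) the combinatorial model of alternating signed walks is rigid enough that the compatibility criterion of Lemma \ref{lem:tau-rigid-connected} translates into a transparent non-crossing/non-overlapping condition, after which the count is a routine lattice-path exercise. The derived invariance assertion is then automatic from Theorem \ref{thm:main2} combined with Rickard's theorem that all Brauer tree algebras with a common number of edges and multiplicity are derived equivalent.
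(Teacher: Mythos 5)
Your reduction to a single representative tree via Theorem \ref{thm:main2}, your treatment of arbitrary multiplicity, and your derivation of the $h$-polynomial from the $f$-polynomial via $h(x)=f(x-1)$ (the binomial manipulation is correct: the inner sum is the $k$-th finite difference of $\binom{n+j}{n}$, which equals $\binom{n}{k}$ by Vandermonde) are all sound, and the first and last of these match the paper. The problem is that everything then rests on the claim $\#\tptilt^{j}(A_G)=\binom{n+j}{j,j,n-j}$ for a linear or star-shaped tree, and this is precisely the point you do not prove. Two concrete issues. First, Lemma \ref{lem:tau-rigid-connected} gives only \emph{necessary} conditions for a direct sum of two signed walks to be pretilting; to ``characterise when $j$ such walks assemble into a 2-term pretilting complex'' you need the converse, which this paper never establishes (it would require the full classification from \cite{AAC}, plus an argument that pairwise compatibility suffices for a whole $j$-tuple). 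Second, the resulting count of compatible $j$-tuples of signed intervals is asserted to be ``a classical lattice-path problem'' with answer $\binom{n+j}{j}\binom{n}{j}$, but the references you lean on (\cite{Ad}, \cite{Ao1}) treat only the extreme case $j=n$, and no citation or bijection is offered for intermediate $j$. Since this refined enumeration is the entire content of the theorem, leaving it as a ``routine exercise'' is a genuine gap.

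For comparison, the paper avoids the direct count altogether: for the star-shaped tree it lists $\indtptilt(A_G)=\{P_i,\ P_i[1],\ (P_i\to P_j)\}$, observes that the $g$-vectors are exactly the roots $e_i-e_j$, so that $\calP(A_G)$ is the root polytope of type $\bbA_n$, and then invokes Theorem \ref{thm:BM} (Betke--McMullen): the $h$-vector of \emph{any} unimodular triangulation of a lattice polytope equals its $h^{\ast}$-polynomial, which for the root polytope is $\sum_j\binom{n}{j}^2x^{n-j}$ by \cite{ABHPS}. If you wish to keep your combinatorial route, you must either supply the bijective count for all $j$ (with a genuine if-and-only-if compatibility criterion) or fall back on this geometric identification.
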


\begin{proof}
Let $G$ be the following star-shaped Brauer tree :
$$
G=\vcenter{
\xymatrix@M=0pt{
& \bullet  \\
\bullet &\bullet&\bullet\\\
& \bullet &
\ar@{-}"1,2";"2,2"^1
\ar@{-}"2,1";"2,2"^n
\ar@{-}"1,2";"2,2"
\ar@{-}"3,2";"2,2"_3
\ar@{-}"2,2";"2,3"^2
\ar@{}"3,1";"2,2"|(0.6){\Large\ddots}
}}.$$
By Theorem \ref{thm:main2}, it is enough to show that the statement holds for $\Delta(A_G)$. 
It is easy to check that 
$$\indtptilt A_G = \{P_i,P_i[1], (P_i \to P_j)\ |\ 1\leq i,j\leq n,i\neq j  \},$$
where $(P_i\to P_j)$ denotes the indecomposable 2-term 
pretilting complex. 

On the other hand, we let 
$$\calP_{\bbA_{n}} :=\conv ({e}_i-{e}_j\ |\ 1\leq i,j\leq n+1,i\neq j),$$
which is called the \emph{root lattice polytope of type $\bbA_{n}$} \cite{ABHPS}. 
It is an $n$-dimensional polytope in $\mathbb{R}^{n+1}$ contained in the hyperplane $\{v\in \mathbb{R}^{n+1}\ |\ \sum_{k=1}^{n+1} v_k=0\}$.

Then by taking $[P_i]:={e}_1-{e}_{i+1}$ for any $i$, we have 
$\calP(A_G)=\conv(g(M)\ |\ M\in\indtptilt A_G)=\conv([P_i],-[P_i], -[P_i] + [P_j]\ |\ 1\leq i,j\leq n  )\simeq\calP_{\bbA_{n}}.$ 
Because $\Delta(A_G)$ is given by one of the unimodular triangulations of  $\calP_{\bbA_{n}}$, the {assertion} follows from \cite{ABHPS}.
\end{proof}

Finally we give an application to the enumeration problem of biCatalan combinatorics. Here we briefly recall the theory of Coxeter-biCatalan combinatorics. 
We refer to the original paper \cite{BR}, and also \cite{Re1,Re2,RS} for more {background details}.

Let $W$ be the Weyl group of type $\bbA_n$, which is isomorphic to the symmetric group of rank $n+1$.  
We regard $W$ as a poset defined by the (right) weak order $\leq$. 
Note that $W$ is a lattice, that is, for any $x,y\in W$, there is a greatest lower bound  $x\wedge y$ and 
a least upper bound $x\vee y$.

An equivalence relation $\equiv$ on $W$ is called a \emph{congruence} if it has the following property: 
If $x_1\equiv y_1$ and $x_2\equiv y_2$, then we have $x_1 \wedge x_2\equiv y_1 \wedge y_2$ and $x_1\vee x_2\equiv y_1\vee y_2$ for all $x_1,x_2,y_1,y_2\in W$.
Given a congruence $\Theta$ on $W$, 
we define the \emph{quotient lattice} $W/\Theta$, which is defined as follows: A $\Theta$-class $C_1$ is less than or equal to a $\Theta$-class $C_2$ in $W/\Theta$ if there exists an element $x_1$ of $C_1$ and an element $x_2$ of $C_2$ such that $x_1\le x_2$ in $W$.
For two congruences $\Theta$ and  $\Theta'$,
we define $\Theta \le \Theta'$ if, for any $x, y\in W$, $x \equiv_\Theta y$ implies $x \equiv_{\Theta'} y$. Then the set of congruences {turns out to be also a} lattice.

{Let $Q$ be a quiver of type $\bbA_n$. We define a Coxeter element $c$ of $W$ as an expression $s_{i_1}s_{i_2}\cdots s_{i_n}$ such that
if there is an arrow $j\to i$ in $Q$, then $s_i$ appears before $s_j$ in the expression $s_{i_1}s_{i_2}\cdots s_{i_n}$. It is uniquely determined as an element of $W$. 
We call $c$ a \emph{bipartite} Coxeter element if $Q$ has only sinks and sources. 
Moreover, we define a congruence $\Theta_c$ on $W$ called the \emph{$c$-Cambrian congruence}, which is defined as follows. 
Let $\textnormal{Hasse}W$ be the Hasse quiver of $W$ and 
$\varepsilon_c:=\{s_js_i\to s_j\ |\ j\to i\in Q_1\}$ of arrows of $\textnormal{Hasse}W$.   
We say that $\Theta$ contracts an arrow $x\to y$ in $\textnormal{Hasse}W$ if $x\equiv_{\Theta} y$. 
Then we define $\Theta_c$ to be the minimum congruence that contracts all arrows in $\varepsilon_c$} (we refer to \cite{Re1} for the precise definition). 
Then we define the quotient lattice $W/\Theta_c$ as above, which is called the \emph{Cambrian lattice} of $W$. 

{Now consider the usual representation of $W$
as a reflection group acting on the space $\bbR^{n}$ with trivial fixed subspace. The collection of hyperplanes that define the reflections 
is the \emph{Coxeter arrangement} of $W$. The
hyperplanes in the Coxeter arrangement cut the space into cones, which constitute a fan called the \emph{Coxeter fan} (see \cite{BB} for details). }
Moreover, define the \emph{Cambrian fan} $\Camb(W,c)$ by coarsening the Coxeter fan obtained by gluing together maximal cones according to an equivalence relation on $\Theta_c$. 
Then the maximal cones of $\Camb(W,c)$ correspond to the classes of $W/\Theta_c$ and they are naturally indexed by $c$-sortable elements of $W$ \cite{RS}.

Moreover, we take $\Theta_{biC}:=\Theta_c\wedge\Theta_{c^{-1}}$, that is, 
the greatest lower bound of $\Theta_c$ and $\Theta_{c^{-1}}$. 
Then, as in the same way as above, we define the quotient lattice 
$W/\Theta_{biC}$ (resp.\ the fan $\biCamb(W,c)$), called the \emph{biCambrian lattice} (resp.\ the \emph{biCambrian fan}) \cite{BR}. 
The biCambrian fan $\biCamb(W,c)$ is also defined as 
the coarsest common refinement of the two Cambiran fans $\Camb(W,c)$ and $\Camb(W,c^{-1})$.
Note that the maximal cones of $\biCamb(W,c)$ correspond to classes of $W/\Theta_{biC}$ and they are naturally indexed by  $c$-bisortable elements of $W$ \cite{BR}.

{We define the simplicial sphere underlying $\biCamb(W,c)$ to be the intersection of $\biCamb(W,c)$ and a unit sphere centered at the origin. 
In \cite{BR} (also in \cite{DIRRT}), it is shown that it is a simplicial complex and 
the $h$-vector has been studied \cite[Theorem 2.13]{BR}.} 
Then our result also implies the same consequence. 

\begin{thm}\label{bicatalan}
Let $W$ be the Weyl group of type $\bbA_n$ and $c$ a bipartite Coxeter element of $W$. 
The $f$-polynomial and the $h$-polynomial of 
the simplicial sphere underlying $\biCamb(W,c)$ are 
given as follows : 

$$f(x) = \sum_{j=0}^{n}\binom{n+j}{j,j,n-j}x^{n-j},\ \ \ h(x) = \sum_{j=0}^{n}\binom{n}{j}^2x^{n-j}.$$
\end{thm}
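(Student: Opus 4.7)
The plan is to reduce Theorem~\ref{bicatalan} to Theorem~\ref{main3}: it is enough to identify the simplicial sphere underlying $\biCamb(W,c)$ with $\Delta(A_G)$ for \emph{some} Brauer tree $G$ with $n$ edges, because by Theorem~\ref{thm:main2} the $f$-polynomial of $\Delta(A_G)$ depends only on $n$. The most convenient choice is the linear tree $G$ with $n$ edges, so that $A_G$ is the Brauer tree algebra of a path; this tree is combinatorially the closest to the $\bbA_n$ picture on the biCambrian side.

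First I would invoke the now-standard identification of the Cambrian fan with a $g$-vector fan (cf.\ \cite{RS}): for the bipartite orientation $Q$ of $\bbA_n$ corresponding to the bipartite Coxeter element $c$, the fan $\Camb(W,c)$ coincides with the $g$-vector fan of $2$-term tilting modules over the hereditary algebra $\Bbbk Q$, and analogously $\Camb(W,c^{-1})$ with the $g$-fan of $\Bbbk Q^{\mathrm{op}}$. The biCambrian fan is by definition the coarsest common refinement of these two fans. The key step is then to show that this common refinement equals the $g$-fan $\calP(A_G)$ endowed with its unimodular triangulation from Proposition~\ref{volume}. I would do this by matching rays on the two sides: on the biCambrian side, rays are indexed by join-irreducible $c$-bisortable elements, which in type $\bbA_n$ are described by signed arcs on a path of $n$ edges; on the Brauer tree side, Proposition~\ref{alter walk} identifies the indecomposable $2$-term pretilting complexes of $A_G$ with alternating signed walks on the linear tree $G$, which is exactly the same combinatorial datum. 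Lemma~\ref{lem:tau-rigid-connected} then translates the compatibility of arcs (spanning a common cone of $\biCamb(W,c)$) into the pretilting condition (the direct sum of the corresponding complexes being $2$-term pretilting), so the bijection on rays extends to a bijection of simplicial complexes.

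Once this identification is in place, the $f$- and $h$-polynomials of the simplicial sphere underlying $\biCamb(W,c)$ agree with those of $\Delta(A_G)$, and the formulas follow directly from Theorem~\ref{main3}. The main obstacle is the face-by-face identification in the middle step: both sides are naturally described by signed arcs on a path, but one must align the sign conventions with care and verify that the combinatorial compatibility cut out by the biCambrian fan matches the vanishing of $\Hom_{\Kb(\proj A_G)}(-,-[1])$ exploited in Lemma~\ref{lem:tau-rigid-connected}. In practice I expect this to be extracted either from the explicit arc model of \cite{BR} or from the torsion-class/congruence dictionary developed in \cite{DIRRT}.
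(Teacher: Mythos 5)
Your overall strategy coincides with the paper's: reduce to the linear tree $G$ with $n$ edges, identify the simplicial sphere underlying $\biCamb(W,c)$ with $\Delta(A_G)$, and then quote Theorem \ref{main3}. Where you diverge is in how the identification is made. The paper never touches the Cambrian fans directly; it works entirely with posets. Concretely, it uses Adachi's theorem to get a poset isomorphism $\ttilt A_G \simeq \tsilt(A_G/\soc A_G)$, observes that $A_G/\soc(A_G) \iso \Pi/\rad^2(\Pi)$ for the preprojective algebra $\Pi$ of type $\bbA_n$, invokes \cite[Theorem 7.10]{DIRRT} to identify $\tsilt(\Pi/\rad^2(\Pi))$ with $W/\Theta_{biC}$ via torsion classes, and finally uses \cite[Corollary 6.13]{DIJ} (the poset determines the $g$-vector cones) to upgrade the poset isomorphism to a combinatorial equivalence of simplicial complexes. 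This outsources all the hard combinatorics to two citations and is very short. Your route --- realizing $\Camb(W,c)$ and $\Camb(W,c^{-1})$ as the $g$-fans of $\Bbbk Q$ and $\Bbbk Q\op$ and showing their coarsest common refinement is the triangulated $g$-polytope of $A_G$ by an arcs-versus-signed-walks dictionary --- is more geometric and, if carried out, more self-contained on the biCambrian side; it is essentially the first of the two fallback options you name at the end.

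However, as written the key middle step has two genuine gaps. First, the coarsest common refinement of two complete fans can acquire rays that are rays of neither fan (wherever a wall of one fan crosses the interior of a cone of the other), so matching the rays of $\Camb(W,c)$ and $\Camb(W,c^{-1})$ with alternating signed walks does not by itself show that the refinement is the simplicial fan on the $g$-vectors of $\indtptilt A_G$; you would need to verify that every maximal cone of the refinement is actually spanned by such rays and is unimodular. Second, Lemma \ref{lem:tau-rigid-connected} only gives \emph{necessary} sign conditions for a direct sum of the complexes $P(w,s)$ to be pretilting; to conclude that compatibility of arcs in the biCambrian fan is \emph{equivalent} to the pretilting condition you need the converse, which is not in that lemma (it is in \cite{AAC}, but must be cited and aligned with the arc-compatibility rule of \cite{BR}). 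Until those two points are settled, the face-by-face identification --- which you correctly flag as the main obstacle --- is not yet a proof. If you prefer to avoid this, the paper's chain through $\Pi/\rad^2(\Pi)$ and \cite{DIRRT} closes exactly this gap at the level of lattices rather than fans.
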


\begin{proof}
Let $G$ be a linear tree given by
$$G=(\xymatrix@M=0pt{\bullet\ar@{-}[r]^1& \ar@{-}[r]^2\bullet &\cdots\ar@{-}[r]& \ar@{-}[r]^n\bullet& \bullet}).
$$
Then the Brauer tree algebra $A_G$ has the following structure:
$$\psmat{
 1&\\&2\\1&}
\ds
\psmat{
 &2&\\1&&3\\&2&}
\ds\cdots\ds
\psmat{
 &n-1&\\n-2&&n\\&n-1&}
\ds \psmat{
 &n\\n-1&\\&n}.
$$
Recall that we call a complex $P$ in $\Kb(\proj A)$ \emph{silting} if $\Hom_{\Kb(\proj A)}(P,P[i])=0$ for any $i>0$ and {if} it satisfies (3) of Definition \ref{two-term tilt}. 
Then, according to \cite[Theorem 3.3]{Ad}, we have a poset isomorphism 
$$\ttilt A_G\overset{\simeq}{\to}\tsilt (A_G/\soc(A_G)),$$
where $\tsilt (A_G/\soc(A_G))$ is the set of isoclasses of basic
2-term silting complexes 
and $\soc(A_G)$ is the right socle of $A_G$, which coincides with the left socle and is a two-sided ideal of $A_G$.

Let $\Pi$ be the preprojective algebra of type $\bbA_{n}$, that is, the algebra $$\Bbbk(\xymatrix@C30pt@R10pt{
1 \ar[r]^{a_1} &2 \ar[r]^{a_2}\ar@<1ex>[l]^{a_1^*}&\ar@<1ex>[l]^{a_2^*}\cdots\ar[r]^{a_{n-2}}&n-1 \ar@<1ex>[l]^{a_{n-2}^*}\ar[r]^{a_{n-1}}& \ar@<1ex>[l]^{a_{n-1}^*} n})/I,$$
where $I=\langle\sum_{i=1}^{n} (a_i^*a_i-a_{i-1}a_{i-1}^* )\rangle$. 
Then, since $A_G/\soc(A_G)$ is isomorphic to $\Pi/\rad^2(\Pi)$, we have the following poset isomorphisms
by the above isomorphism and \cite[Theorem 7.10]{DIRRT}:
$$\ttilt A_G\simeq \tsilt (A_G/\soc(A_G))=\tsilt (\Pi/\rad^2(\Pi))\simeq \mathrm{Tor}(\Pi/\rad^2(\Pi))
\simeq W/\Theta_{biC},$$
where $\mathrm{Tor}(\Pi/\rad^2(\Pi))$ denotes the set of torsion classes of the category of finite dimensional $\Pi/\rad^2(\Pi)$-modules. 
Note that the partial order of 2-term silting complexes coincides with that 
of torsion classes \cite[Corollary 3.9]{AIR}, and hence we can apply this result.
Since the poset structure of $\ttilt A_G$ is entirely determined by the cones of $g$-vectors \cite[Corollary 6.13]{DIJ}, 
$\Delta(A_G)$ is combinatorially equivalent to the simplicial sphere underlying $\biCamb(W,c).$
Then the {assertion} 
follows by Theorem \ref{main3}.
\end{proof}

\section*{Acknowledgement}
The authors thank Osamu Iyama for informing us of a proof of Proposition \ref{prp:2-conv-whole-conv} and Lutz Hille for answering our questions about his paper. 
Y.M. thank Takahide Adachi, Ryoichi Kase and Toshitaka Aoki for their kind advice and useful discussions. 
Y.M. is grateful to Hugh Thomas for letting us know the unimodular triangulations. 
The authors gratefully thank the referee for the very helpful comments, which led to a great improvement of the paper.
{}

\end{document}